\newtheorem{thm}{Theorem}[section]
\newtheorem{lem}[thm]{Lemma}
\newtheorem{prp}[thm]{Proposition}
\newtheorem{cor}[thm]{Corollary}
\theoremstyle{remark}
\newtheorem{rmk}[thm]{Remark}
\newtheorem{con}[thm]{Conjecture}
\newcommand{\Aut}{\operatorname{Aut}}
\newcommand{\id}{\operatorname{id}}
\newcommand{\conf}{\operatorname{conf}}
\newcommand{\dist}{\operatorname{\delta}}
\newcommand{\prob}{\mathbb P}
\newcommand{\haar}{\mathbb H}
\newcommand{\sph}[1]{S_{v_0}\! \left( #1 \right)}
\newcommand{\bigO}{\mathcal O}
\newcommand{\sq}{\mathbin \square}
\title{Random colorings and automorphism breaking in locally finite graphs}
\author{Florian Lehner\thanks{The author acknowledges the support of the Austrian Science Fund (FWF), project W1230-N13.}}
\begin{document}

\maketitle

\begin{abstract}
A colouring of a graph $G$ is called distinguishing if its stabiliser in $\Aut G$ is trivial. It has been conjectured that, if every automorphism of a locally finite graph moves infinitely many vertices, then there is a distinguishing $2$-colouring. We study properties of random $2$-colourings of locally finite graphs and show that the stabiliser of such a colouring  is almost surely nowhere dense in $\Aut G$ and a null set with respect to the Haar measure on the automorphism group. We also investigate random $2$-colourings in several classes of locally finite graphs where the existence of a distinguishing $2$-colouring has already been established. It turns out that in all of these cases a random $2$-colouring is almost surely distinguishing.

\medskip
\noindent \textbf{MSC 2010:} 05E18, 20B27, 05C63.
\end{abstract}

\section{Introduction}
\label{sec:intro}

A colouring of the vertices of a graph $G$ is called distinguishing if it is not preserved by any non-trivial automorphism of $G$. The notion has been introduced by Albertson and Collins \cite{MR1394549}, but problems involving distinguishing colourings have been around much longer. A classic example is Rubin's key problem \cite{rubin} where a blind professor wants to distinguish his keys by the shape of their handles.

While a distinguishing colouring clearly exists for every graph (simply colour every vertex with a different colour), finding a distinguishing colouring with the minimum number of colours can be challenging.

In this paper we focus on infinite, locally finite graphs. For this class of graphs one of the most intriguing questions is whether or not the following conjecture of Tucker \cite{MR2776826} is true, which generalises a result on finite graphs due to Russel and Sundaram \cite{MR1617449}.

\begin{con}
\label{con:tucker}
Let $G$ be an infinite, connected, locally finite graph with infinite motion. Then there is a distinguishing $2$-colouring of $G$.
\end{con}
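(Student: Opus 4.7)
The most natural attack is the probabilistic method. Colour each vertex of $G$ independently black or white, each with probability $1/2$, and write $\mathcal S$ for the (random) set of automorphisms preserving the resulting colouring. The plan is to show that with positive probability (in fact, almost surely) $\mathcal S = \{\id\}$; this gives a distinguishing $2$-colouring non-constructively.

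First I would verify the one-automorphism calculation. Fix any non-trivial $\varphi \in \Aut G$. Since $\varphi$ has infinite motion, the set of vertices it moves is infinite, so its cycle decomposition on $V(G)$ contains infinitely many non-trivial cycles. A finite cycle of length $\ell \geq 2$ is monochromatic with probability $2^{1-\ell}$, and using local finiteness one can extract an infinite independent subfamily of such events. Therefore $\prob(\varphi \in \mathcal S) = 0$. Were $\Aut G$ countable, a union bound would immediately finish the proof.

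The genuine difficulty is that $\Aut G$ may be uncountable (e.g.\ for the infinite $k$-regular tree with $k \geq 3$). To handle this, I would fix a root $v_0$ and work with the permutation topology on $\Aut G$, whose basic open sets are the cosets of the pointwise ball-stabilisers $\Aut_n := \operatorname{Stab}(B_n(v_0))$. Then $\Aut G$ is a locally compact Polish group and $\Aut_{v_0}$ is compact, so it carries a Haar probability measure $\haar$. I would split the task in two: (i) prove that $\haar$-a.s.\ (and almost surely over the colouring) $\mathcal S \cap \Aut_{v_0} = \{\id\}$, and (ii) control the countably many $\Aut_{v_0}$-orbits of automorphisms moving $v_0$, by a countable union bound using (i) as a uniform compactness input. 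The abstract's assertion that $\mathcal S$ is $\haar$-null and nowhere dense is essentially a weakening of (i), and would be a natural warm-up lemma.

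The main obstacle lies entirely in strengthening that first-moment statement from ``$\mathcal S$ is negligible'' to ``$\mathcal S$ is a singleton''. An uncountable union of null sets need not be null, so a pointwise argument does not automatically cover an entire coset $\varphi \Aut_n$. Overcoming this seems to require some quantitative spreading estimate: one would want to show that, for each $n$, the number of distinct actions of $\Aut G$ on the sphere $\sph{n}$ is dominated by the $2^{|\sph{n}|}$ ``randomness budget'' of the colouring on that sphere, and then deploy a Borel--Cantelli-style sphere-by-sphere argument. Making such a counting estimate hold under only the infinite-motion hypothesis --- with no control on degrees or growth --- is precisely the point at which Conjecture~\ref{con:tucker} has so far resisted a general proof, and where I would expect my plan to either succeed via a clever orbit-counting inequality or to break down and require a case analysis by end structure.
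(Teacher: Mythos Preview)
The statement you are addressing is Tucker's conjecture, which the paper records as an \emph{open problem}; there is no proof in the paper to compare against. Your proposal is therefore not competing with a proof but with a non-existent one, and you are candid about this: you correctly locate the obstruction at the passage from ``$\mathcal S$ is $\haar$-null'' to ``$\mathcal S=\{\id\}$'', and you note that no orbit-counting inequality valid under the bare infinite-motion hypothesis is currently known.

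Your outline in fact tracks the paper's own partial results closely. The one-automorphism calculation and the countable case are exactly Theorem~\ref{thm:countable}; the Haar-measure statement you call a ``warm-up lemma'' is Theorem~\ref{thm:zeromeasuregrp}; and your reduction (ii) --- handling the countably many cosets of the root stabiliser once the compact case is settled --- is the content of the corollary closing Section~\ref{sec:randomcolour}. So the architecture is sound and matches what the paper achieves.

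Where your write-up is slightly optimistic is in step~(ii): knowing that $\mathcal S\cap\Aut_{v_0}=\{\id\}$ almost surely does not by itself imply the same for an arbitrary coset $\varphi\,\Aut_{v_0}$, since the coset is not a group and the ``uniform compactness input'' you invoke has no obvious formulation. The paper's reduction to the compact case goes instead through the equivalence relation $\sim_\Gamma$ and Lemma~\ref{lem:fixclasses}, which forces $\Gamma_c$ into a union of compact groups $\Delta_i$; one then needs (i) for \emph{each} $\Delta_i$, not just for $\Aut_{v_0}$. This is a repair rather than a fatal gap, but it is worth being precise about. The genuine gap remains exactly where you place it: proving (i) in general is equivalent to the full conjecture, and neither your sphere-by-sphere Borel--Cantelli heuristic nor anything in the paper closes it without additional hypotheses such as the growth bound in Theorem~\ref{thm:growth}.
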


The conjecture is known to be true for many classes of infinite graphs including trees \cite{MR2302536}, tree-like graphs \cite{MR2302543}, and graphs with countable automorphism group \cite{istw}. In \cite{smtuwa} it is shown that graphs satisfying the so-called distinct spheres condition have infinite motion as well as distinguishing number two. Examples for such graphs include leafless trees, graphs with infinite diameter and primitive automorphism group, vertex-transitive graphs of connectivity $1$, and Cartesian products of graphs where at least two factors have infinite diameter. It is also known that Conjecture~\ref{con:tucker} is true for graphs fulfilling certain growth conditions \cite{growth}.

All the results mentioned above were attained by deterministically colouring vertices in order to break certain automorphisms. In the present paper we pursue a different approach. We investigate how random colourings behave with respect to automorphism breaking. The idea suggests itself, especially since the result of Russel and Sundaram can be proved using the probabilistic method. As it turns out, in all of the above examples a random colouring will be almost surely distinguishing which leads to the following conjecture.

\begin{con}
\label{con:random}
Let $G$ be a locally finite graph with infinite motion, then a random colouring of $G$ is almost surely distinguishing.
\end{con}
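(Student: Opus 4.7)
The starting point is the single-automorphism case. Given a fixed non-trivial $\varphi \in \Aut G$, infinite motion lets me greedily extract vertices $v_1, v_2, \ldots$ so that the pairs $\{v_i, \varphi(v_i)\}$ are pairwise disjoint: pick any $v_1$ moved by $\varphi$, and at step $i$ pick $v_i$ moved by $\varphi$ and disjoint from $\bigcup_{j<i}\{v_j,\varphi(v_j)\}$, which is possible because the support of $\varphi$ is infinite. Under a uniform random $2$-colouring, the events ``$v_i$ and $\varphi(v_i)$ receive the same colour'' are independent Bernoulli$(\tfrac{1}{2})$, so $\varphi$ preserves the colouring with probability $0$.

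The difficulty, and the reason this is stated as a conjecture rather than a theorem, is that $\Aut G$ may be uncountable, so the countable union bound used above does not extend. The paper itself settles for the weaker consequences that the stabiliser of a random colouring is almost surely nowhere dense and Haar-null, both of which follow from the single-automorphism calculation together with the topology on $\Aut G$. To sharpen these to triviality, my plan is to fix a reference vertex $v_0$, stratify non-trivial automorphisms by the minimal radius $n$ at which they differ from the identity on the ball $B_n(v_0)$, and, for each finite partial map $\psi$ arising this way, bound the probability that some full automorphism extending $\psi$ preserves the colouring. Since the set of such partial maps $\psi$ is countable, a Borel--Cantelli argument would close the conjecture provided each of these extension probabilities is $0$.

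The crux is this last step. For a single extension $\varphi$ of $\psi$ the argument above gives probability $0$, but one needs a bound that holds after an uncountable supremum over extensions. A natural strategy is to produce, from $\psi$ alone, an infinite family of pairwise disjoint pairs $\{u_i, w_i\}$ outside $B_n(v_0)$ such that every extension $\varphi \supseteq \psi$ is forced to satisfy $\varphi(u_i) = w_i$; the colour-matching constraints then give independent probability-$\tfrac{1}{2}$ events as before. Constructing such a uniform witness set from $\psi$ is the main obstacle: infinite motion only provides witnesses on an automorphism-by-automorphism basis, and converting this into a common witness appears to require either a compactness argument in $\Aut G$ (exploiting that pointwise stabilisers of finite sets are compact because $G$ is locally finite, so the extensions of $\psi$ form a compact set) or a genuinely combinatorial construction using graph structure. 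I expect any attempt will get stuck precisely here, which is consistent with the conjecture currently being known only in the structured classes listed in the introduction.
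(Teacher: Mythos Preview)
The statement is a conjecture, and the paper does not prove it; you correctly recognise this and your proposal is not a proof but an analysis of the obstruction. Your first paragraph reproduces exactly the argument the paper uses for the countable case (Theorem~\ref{thm:countable}), and your diagnosis of the difficulty---that the union bound fails once $\Aut G$ is uncountable---is precisely the paper's own explanation of why this remains open.

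Where your sketch diverges from the paper is in the line of attack you propose for the uncountable case. You stratify by the restriction of an automorphism to a ball $B_{v_0}(n)$ and ask for a uniform infinite witness set determined by that restriction alone. The paper does not pursue this route. Instead, its main combinatorial tool is the equivalence relation $\sim_\Gamma$ (Lemma~\ref{lem:fixclasses}): two points $s,t$ are equivalent when some $\varphi$ sends $s$ to $t$ and $\varphi$ fixes all but finitely many $\Gamma_s$-suborbits setwise. The paper shows that a random colouring almost surely forces the stabiliser into the setwise stabiliser of every $\sim_\Gamma$-class, and from this deduces (via the remarks following Lemma~\ref{lem:fixclasses}) that the full conjecture reduces to the case of \emph{compact} permutation groups. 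This is a genuinely different reduction from yours: rather than seeking uniform pair-witnesses for a fixed partial map, it exploits the suborbit structure to cut the group down to a union of compact pieces.

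Your closing remark about compactness is therefore on target in spirit---compactness is indeed where the paper lands---but the paper reaches it by a different mechanism, and even after that reduction the conjecture remains open.
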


While we are not able to prove this conjecture, we will show that a random colouring is ``almost'' distinguishing in the following sense.

There is a rather natural topology on the automorphism group of a graph (or more generally, on any group of permutations of a countable set) called the permutation topology. Using this topology and a corresponding Haar measure on the automorphism group we show that the stabiliser of a random colouring is almost surely sparse in at least two ways.

\begin{itemize}
\item It will almost surely be nowhere dense and
\item it will almost surely be a null set with respect to the Haar measure.
\end{itemize}
These properties can also be observed in the slightly more general setting of closed, subdegree finite permutation groups of a countable set.

The rest of the paper is structured as follows. Section~\ref{sec:notions} contains all necessary notions and notations. We then define a family of ultrametrics on a group of permutations of a countable set. The induced topology of each member of this family will be the permutation topology mentioned earlier, hence by studying those metrics we will gain some insight into properties of this topology. We are particularly interested in properties of subdegree finite permutation groups which are generalisations of automorphism groups of locally finite graphs and will be the topic of Sections~\ref{sec:stabiliser} and \ref{sec:randomcolour}. In Section~\ref{sec:stabiliser} we will study topological properties of stabilisers of colourings, partial colourings, and subsets of the set on which the permutation group acts. In Section~\ref{sec:randomcolour} we investigate properties of the stabiliser of a random colouring. This section also contains the proofs of the sparsity results mentioned above. Finally, in Section~\ref{sec:graphcolour}, we turn to random colourings of locally finite graphs. We verify Conjecture~\ref{con:random} for many classes of graphs and show that the requirement of local finiteness is necessary by giving a non locally finite counterexample.

\section{Notions and notations}
\label{sec:notions}

Throughout most of this paper we will use Greek letters for group related variables while the Latin alphabet will be reserved for sets on which the group acts. Furthermore these sets  will usually be countable although one could possibly extend some of the results to uncountable sets as well.

Let $S$ be a set and let $\Gamma$ be a group acting on $S$ from the left (all definitions apply analogously to right actions). The image of a point $s \in S$ under an element $\gamma \in \Gamma$ is denoted by $\gamma s$. If $\Delta$ is a subset of $\Gamma$ we denote by $\Delta s =\{\gamma s \mid \gamma \in \Delta\}$ the \emph{orbit of $s$ under $\Delta$}. If $\Delta$ is a subgroup of $\Gamma$ it is well known that $s \in \Delta t$ if and only if $t \in \Delta s$. 

For a point $s \in S$ the \emph{stabiliser of $s$ in $\Gamma$} is defined as $\Gamma_s = \{\gamma \in \Gamma \mid \gamma s = s\}$ and it is well known that this is a subgroup of $\Gamma$. We say that $\Gamma$ is \emph{subdegree finite} if for every $s \in S$ all orbits under $\Gamma_s$ are finite. If $S' \subseteq S$ then we denote by $\Gamma _{S'}$ the \emph{setwise stabiliser of $S'$ in $\Gamma$}, that is, $\Gamma_{S'} = \{\gamma \in \Gamma \mid \forall s \in S' \colon \gamma s \in S'\}$. The \emph{pointwise stabiliser of $S$ in $\Gamma$} is the set $\Gamma_{(S')} = \bigcap_{s \in S'} \Gamma_s$.

The action of $\Gamma$ is \emph{faithful} if different group elements act by different permutations on $S$. In this case we will not distinguish between $\gamma \in \Gamma$ and the corresponding permutation of $S$. Hence we will view $\Gamma$ as a group of permutations of $S$, that is, $\Gamma$ is seen as a subgroup of the group $\Pi_S$ of all bijections $S \to S$.

For the rest of this section let $S$ be a countable set and let $\Gamma$ be a group of permutations of $S$. We say that $\Gamma$ is \emph{closed} if it is a closed subgroup of $\Pi_S$ in the topology of pointwise convergence where $S$ is equipped with the discrete topology. This topology coincides with the permutation topology which we will introduce in the next section.

The \emph{motion} of an element $\gamma \in \Gamma$ is the number of elements of $S$ which are not fixed by $\gamma$. The \emph{motion of the group $\Gamma$} is the minimal motion of a non-trivial element of $\Gamma$. Similarly define the motion of a subset $\Delta \subseteq \Gamma$. Notice that the motion is not necessarily finite, in fact we will mostly be concerned with groups with infinite motion.  Usually $S$ will be the vertex set of a locally finite graph. In this case we define the \emph{motion of the graph $G$} as the motion of $\Gamma = \Aut G$ acting on the vertex set.

Let $C$ be a set. A \emph{$C$-colouring of $S$} is a map $c \colon S \to C$. A \emph{partial $C$-colouring of $S$} is a map $c' \colon S' \to C$ where $S' \subset S$. The set $C$ is referred to as the set of \emph{colours}. Usually $C$ will be the set $\{0,1\}$. In this case we will speak of a \emph{$2$-colouring of $S$}. By a \emph{(partial) $C$-colouring of a graph} $G$ we mean a (partial) $C$-colouring of the vertices of $G$. We denote by $\mathcal C(S,C)$ the set of all $C$-colourings of $S$ and by $\mathcal C(S',C)$ the set of all partial $C$-colourings with domain $S'$. Furthermore let $\mathcal C(S) = \mathcal C(S,\{0,1\})$ and $\mathcal C(S') = \mathcal C(S',\{0,1\})$.

There is a natural (right) action of a group $\Gamma$ of permutations of $S$ on the set of $C$-colourings of $S$ defined by $(c\gamma) (s) = c (\gamma s)$. Notice that even if the action on $S$ is assumed to be faithful this need not necessarily hold for the action on $\mathcal C(S,C)$. 

Given a colouring $c$ and $\gamma \in \Gamma$ we say that \emph{$\gamma$ preserves $c$} if $\gamma$ lies in the stabiliser subgroup $\Gamma _c = \{\gamma \in \Gamma \mid c  \gamma = c\}$. We say that \emph{$\gamma$ preserves a partial colouring $c' \colon S' \to C$} if there are colourings $c_1$ and $c_2$ such that $c_1(s)=c_2(s) = c'(s)$ for every $s \in S'$ and $c_1 \gamma = c_2$. Notice that in general $c_1 \neq c_2$. The \emph{stabiliser $\Gamma_{c'}$ of a partial colouring $c'$} consists of all permutations $\gamma \in \Gamma$ which preserve $c'$. Observe that the stabiliser of a partial colouring need not necessarily be a subgroup of $\Gamma$. If $\gamma$ does not preserve a (partial) colouring $c$ then we say that \emph{$c$ breaks $\gamma$}. We say that \emph{$c$ breaks $\Delta \subseteq \Gamma$} if it breaks every nontrivial element of $\Delta$. A (partial) colouring $c$ which breaks $\Delta$ is called \emph{$\Delta$-distinguishing}. A (partial) colouring of a graph $G$ is called \emph{distinguishing} if it is $(\Aut G)$-distinguishing. Finally we say that a (partial) coloring $c$ \emph{fixes a set $S' \subseteq S$ setwise} if $\Gamma_c \subseteq \Gamma_{S'}$ and that it \emph{fixes $S'$ pointwise} if  $\Gamma_c \subseteq \Gamma_{(S')}$.

\section{A metric on the automorphism group}
\label{sec:metric}

In this section we will describe a family of metrics on a group $\Gamma$ of permutations of a countable set $S$ and discuss some of the properties the induced topology has. The way the metrics are constructed will seem familiar to many readers.  In fact, the construction is similar to the construction of the $p$-adic norm and a similar approach can also be used to equip the end space of a locally finite graph with a metric. It turns out that every metric in this family induces the same topology on $\Gamma$, the so called permutation topology. This topology was first studied in the 1950s by Karass and Solitar \cite{MR0081274} and Maurer \cite{MR0073600} and is a rather natural topology for groups of permutations. As mentioned earlier, another way of introducing the same topology is to equip the set $S$ with the discrete topology and consider the topology of pointwise convergence on $\Gamma$. The paper \cite{moller} by Möller gives a good overview on the permutation topology on closed, subdegree finite permutation groups.

For the construction of the metric, let $S$ be a countable set and let $\Gamma$ be a group of permutations of $S$. Let $(S_i)_{i \in \mathbb N}$ be a sequence of finite subsets of  $S$ such that $S_i \subset S_{i+1}$ and $\lim_{i \to \infty} S_i =S$. For two permutations $\gamma_1, \gamma_2 \in \Gamma$ define the confluent of $\gamma_1$ and $\gamma_2$ as
\[
	\conf(\gamma_1, \gamma_2) = \min \{i \in \mathbb N \mid \exists s \in S_i \colon \gamma_1 \gamma_2^{-1} s \neq s \}-1,
\]
that is, the confluent is the maxiumum $i$ such that $\gamma_1$ and $\gamma_2$ coincide on $S_i$ and it is zero if they differ on $S_1$. Notice that the value of $\conf(\gamma_1, \gamma_2)$ clearly depends on the choice of the sequence $S_i$. 

Now define the distance between $\gamma_1$ and $\gamma_2$ as
\[
\dist(\gamma_1, \gamma_2) = \begin{cases} 0 & \text{if } \gamma_1 = \gamma_2, \\ 2^{-\conf(\gamma_1,\gamma_2)} &\text{otherwise.}\end{cases}
\]
The following proposition shows that the term distance is justified. In fact, $\dist$ even satisfies an ultrametric triangle inequality. As we mentioned earlier the topology induced by $\delta$ does not depend on the choice of the sequence $S_i$. 

\begin{prp}
The function $\dist$ as defined above is an ultrametric on $\Gamma$, all such metrics induce the same topology on $\Gamma$, which makes $\Gamma$ a topological group.
\end{prp}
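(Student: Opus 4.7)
The plan is to verify the three ultrametric axioms directly, then argue topological equivalence by identifying the metric balls with the standard basic open sets of the topology of pointwise convergence; the topological group property will then follow routinely.

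Positivity is immediate from the case distinction defining $\dist$. For symmetry, a permutation and its inverse share the same fixed points, so $\{s \in S_i : \gamma_1 \gamma_2^{-1} s \neq s\} = \{s \in S_i : \gamma_2 \gamma_1^{-1} s \neq s\}$ for every $i$, which gives $\conf(\gamma_1,\gamma_2) = \conf(\gamma_2,\gamma_1)$. For the strong triangle inequality, the key observation is that $\conf(\gamma_1, \gamma_2) \geq k$ is equivalent to $\gamma_1 \gamma_2^{-1}$ fixing $S_k$ pointwise. Since the composition of two permutations each fixing $S_k$ pointwise again fixes $S_k$ pointwise, and $\gamma_1 \gamma_3^{-1} = (\gamma_1 \gamma_2^{-1})(\gamma_2 \gamma_3^{-1})$, we obtain $\conf(\gamma_1,\gamma_3) \geq \min\{\conf(\gamma_1,\gamma_2), \conf(\gamma_2,\gamma_3)\}$, which translates under $x \mapsto 2^{-x}$ into the desired ultrametric inequality.

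To see that all metrics in the family induce the same topology, I would show that each of them yields the topology of pointwise convergence on $\Gamma$ (with $S$ carrying the discrete topology). Unwinding definitions, the closed ball $\{\gamma : \dist(\gamma,\gamma_0) \leq 2^{-k}\}$ is precisely the set of $\gamma$ that agree with $\gamma_0$ on the finite set $\gamma_0^{-1}(S_k)$. Conversely, any finite $F \subseteq S$ is contained in $\gamma_0^{-1}(S_k)$ once $k$ is large enough that $\gamma_0(F) \subseteq S_k$, which is eventually the case because $\bigcup_i S_i = S$. Hence the metric balls and the basic pointwise-convergence neighbourhoods mutually refine each other at every point, and the induced topology is independent of the chosen sequence $(S_i)$.

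For the topological group property I would verify continuity of multiplication and inversion directly in the pointwise-convergence description. For inversion, if $\gamma$ agrees with $\gamma_0$ on $\gamma_0^{-1}(F)$, then $\gamma^{-1}$ agrees with $\gamma_0^{-1}$ on $F$. For multiplication, if $\delta$ agrees with $\delta_0$ on $F$ and $\gamma$ agrees with $\gamma_0$ on $\delta_0(F)$, then $\gamma\delta$ agrees with $\gamma_0\delta_0$ on $F$. The only real pitfall throughout is the slight asymmetry of the definition of $\conf$, which measures agreement via $\gamma_1\gamma_2^{-1}$ rather than via direct equality on a fixed set; this is precisely why the metric ball around $\gamma_0$ corresponds to agreement on $\gamma_0^{-1}(S_k)$ rather than on $S_k$ itself, but once this translation is made, the remaining verifications are routine.
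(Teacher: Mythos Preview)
Your proof is correct. The ultrametric verification is essentially identical to the paper's. The remaining two parts take a somewhat different route.

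For the independence of the topology from the sequence $(S_i)$, the paper compares two such metrics $\dist$ and $\dist'$ directly: given a $\dist$-ball of radius $2^{-n}$, it finds $m$ with $S_n \subseteq S_m'$ and observes that the $\dist'$-ball of radius $2^{-m}$ sits inside it. You instead pass through a fixed reference point, showing that every such metric induces the topology of pointwise convergence; independence then follows by transitivity. Your route has the bonus of actually establishing the identification with the permutation topology that the paper only states in the surrounding text, and your observation that the ball around $\gamma_0$ corresponds to agreement on $\gamma_0^{-1}(S_k)$ (rather than on $S_k$) is exactly the right bookkeeping.

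For the topological group property, the paper simply asserts that left and right multiplication and inversion are isometries. Right multiplication genuinely is an isometry for this metric, but left multiplication and inversion are in general only homeomorphisms (left multiplication by $\alpha$ converts the metric for $(S_i)$ into the metric for $(\alpha^{-1}S_i)$), so the paper's one-line justification is a bit glib even if the conclusion is fine. Your direct verification of continuity in the pointwise-convergence picture avoids this issue entirely.
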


\begin{proof}
It is readily verified that $\dist(\gamma_1, \gamma_2)$ is symmetric, non-negative, and zero if and only if $\gamma_1 = \gamma_2$. Furthermore, if $r = \min \{ \conf (\gamma_1, \gamma_2) ,\conf (\gamma_2, \gamma_3) \}$ then both $\gamma_1 \gamma_2^{-1}$ and $\gamma _2  \gamma_3^{-1}$ fix $S_r$ pointwise and hence wo does $\gamma_1 \gamma _2^{-1} \gamma _2 \gamma_3^{-1} = \gamma_1 \gamma_3^{-1}$. Thus
\[
	\dist(\gamma_1,\gamma_3) \leq 2^{-r} = \max(\dist(\gamma_1,\gamma_2), \dist(\gamma_2,\gamma_3)),
\]
so $\dist$ is an ultrametric.

Clearly, every sequence $S_i$ induces a different metric on $\Gamma$ but we claim that all of them induce the same topology. 

Indeed, let $\Delta$ be an open neighbourhood of a permutation $\gamma \in \Gamma$ in the topology which comes from the distance $\dist$ defined using the sequence $(S_i)_{i \in \mathbb N}$. Then there is a natural number $n$ such that $\Delta$ contains a $\dist$-ball with center $\gamma$ and radius $2^{-n}$. This implies that $\Delta$ contains all automorphisms $\gamma'$ such that $\gamma \gamma'^{-1}$ fixes $S_n$ pointwise. 

Now consider a different sequence $(S_i')_{i \in \mathbb N}$ of finite subsets of $S$ whose limit is $S$ and use this sequence do define another metric $\dist '$. Then there is an index $m$ such that $S_n \subset S'_m$. So if a permutation $\gamma'$ fulfills $\dist'(\gamma, \gamma') \leq 2^{-m}$ then it certainly holds that $\dist(\gamma, \gamma') \leq 2^{-n}$. In other words, $\Delta$ contains a $\dist'$-ball with center $\gamma$ and radius $2^{-m}$.

So we have proved that an open set with respect to the metric $\dist$ is also open with respect to the metric $\dist'$. Since the converse can be shown in a completely analogous way we conclude that the respective topologies must coincide.

Finally, it is easy to see, that this topology makes $\Gamma$ a topological group. Simply notice that left and right multiplication as well as taking inverses are isometries.
\end{proof}

It is a well known fact that in an ultrametric space distinct balls are disjoint. From this it follows that for any ball $\Delta$ with radius $\varrho$ subballs of $\Delta$ with radius $\varrho' < \varrho$ form a partition of $\Delta$. The following lemma states that this partition will be countable if we partition the whole space, and finite if $\Gamma$ is subdegree finite and $\Delta$ is a strict subset of $\Gamma$.

\begin{lem}
\label{lem:fewballs}
There are only countably many distinct balls of radius $\varrho < 1$ in $\Gamma$. If $\Gamma$ is subdegree finite, then each ball of radius $\varrho < 1$ only has finitely many distinct subballs of radius $\varrho' < \varrho$.
\end{lem}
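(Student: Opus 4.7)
The plan is to translate both assertions into statements about indices of pointwise stabilisers in $\Gamma$ and then invoke the hypothesis. First I would observe that because $\dist$ is left-invariant (the proof of the proposition noted that left and right multiplication are isometries, and $\dist(\gamma_1,\gamma_2) = \dist(\id, \gamma_1\gamma_2^{-1})$ by the definition of $\conf$), the closed ball of radius $2^{-n}$ centred at $\gamma$ is exactly the right coset $\Gamma_{(S_n)}\gamma$, since $\dist(\gamma,\gamma') \le 2^{-n}$ iff $\gamma\gamma'^{-1}$ fixes $S_n$ pointwise. Moreover, since $\dist$ only takes values in $\{0\}\cup\{2^{-k}: k\in\mathbb N\}$, any ball of radius $\varrho<1$ equals a ball of radius $2^{-n}$ for some $n\ge 1$; so balls of radius $<1$ are precisely cosets of pointwise stabilisers $\Gamma_{(S_n)}$ with $n\ge 1$ (and we may assume $S_n\ne\emptyset$, since otherwise the ball is the whole space and we can simply replace $n$ by the smallest index where $S_n\ne\emptyset$).

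For the first assertion, fix such an $n$. A coset $\Gamma_{(S_n)}\gamma$ is uniquely determined by the restriction $\gamma|_{S_n}\colon S_n\to S$, because two permutations lie in the same coset iff they agree on $S_n$. Since $S_n$ is finite and $S$ is countable, there are only countably many such restrictions, hence only countably many distinct balls of radius $2^{-n}$.

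For the second assertion, let $B$ be a ball of radius $2^{-n}$, say $B=\Gamma_{(S_n)}\gamma_0$, and let $\varrho'<2^{-n}$; then $\varrho'\le 2^{-m}$ for some $m>n$, and subballs of $B$ of radius $\varrho'$ are cosets of $\Gamma_{(S_m)}$ inside $B$. Their number equals $[\Gamma_{(S_n)}:\Gamma_{(S_m)}]$, which by the orbit–stabiliser theorem coincides with the size of the orbit of the tuple of elements of $S_m\setminus S_n$ under the componentwise action of $\Gamma_{(S_n)}$. This orbit embeds into the product of the individual $\Gamma_{(S_n)}$-orbits of the points in $S_m\setminus S_n$. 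Picking any $s\in S_n$, we have $\Gamma_{(S_n)}\subseteq\Gamma_s$, so each such orbit is contained in a $\Gamma_s$-orbit, which is finite by subdegree finiteness. The product of finitely many finite numbers is finite, which gives the claim.

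The argument is essentially bookkeeping once the identification of balls with cosets is made; the only mild obstacle is the edge case $S_n=\emptyset$, which is handled by the preliminary observation above, and keeping straight that we need a point $s\in S_n$ to apply subdegree finiteness in the second part (which is why we need $\varrho<1$ rather than $\varrho\le 1$).
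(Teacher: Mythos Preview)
Your proof is correct and follows essentially the same approach as the paper's: both identify balls of radius $<1$ with cosets of the pointwise stabilisers $\Gamma_{(S_n)}$, count them via the (countably many) possible restrictions of permutations to the finite set $S_n$, and for the second part use that every element of the ball fixes a common point of $S_n$, so that subdegree finiteness bounds each orbit and hence the number of possible restrictions to $S_m$. One minor slip: the right coset $\Gamma_{(S_n)}\gamma$ is actually determined by $\gamma^{-1}|_{S_n}$ rather than $\gamma|_{S_n}$ (since $\gamma'\gamma^{-1}\in\Gamma_{(S_n)}$ says $\gamma^{-1}$ and $\gamma'^{-1}$ agree on $S_n$), but this is harmless for the counting.
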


\begin{proof}
By the definition of $\dist$, balls of radius $\varrho$ are exactly the cosets with respect to the pointwise stabiliser of $S_i$ where $i$ is the unique natural number such that $2^{-i+1}> \varrho \geq 2^{-i}$. Since $S_i$ is finite there are only countably many possibilities to choose the image of $S_i$. So the set of cosets---and hence also the set of balls with radius $\varrho$---is at most countable.

Now let $\Delta \subseteq \Gamma$ be a ball of radius $\varrho < 1$. Since multiplication by a group element is an isometry we may without loss of generality assume that the center of $\Delta$ is $\id$. This implies that $\Delta$ is the pointwise stabiliser of $S_i$ where $2^{-i+1}> \varrho \geq 2^{-i}$. 

A subball of $\Delta$ with radius $\varrho'$ is a coset of $\Delta$ with respect to the stabiliser of $S_j$ where $j$ is the unique natural number such that $2^{-j+1}> \varrho' \geq 2^{-j}$. Hence it suffices to show that there is only a finite number of such cosets.

To see that this is the case notice that every automorphism in $\Delta$ fixes $S_1$.  Furthermore note that $\Gamma$ is subdegree finite, hence the orbit of each $s \in S$ under $\Delta$ is finite. Since $S_j$ is finite  there are only finitely many possibilities to choose an image of $S_j$.
\end{proof}

Now we can use the previous lemma to show that small balls in a closed, subdegree finite permutation group $\Gamma$ are compact. From this result we can derive a multitude of topological properties of $\Gamma$.

\begin{lem}
\label{lem:compactballs}
If $\Gamma$ is closed and subdegree finite then $\Gamma$ is locally compact, more specifically, balls of radius $\varrho < 1 $ are compact.
\end{lem}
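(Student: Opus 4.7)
The plan is to prove sequential compactness of a ball $\Delta \subseteq \Gamma$ of radius $\varrho < 1$. Since right multiplication by any fixed element is an isometry for $\dist$, I may translate and assume that $\Delta$ is centred at $\id$; as in the proof of Lemma~\ref{lem:fewballs}, the centred ball is precisely $\Delta = \Gamma_{(S_i)}$ for the unique $i$ with $2^{-i+1} > \varrho \geq 2^{-i}$. In particular $\Delta$ is a subgroup sitting inside every $\Gamma_s$ for $s \in S_i$, so the subdegree finiteness hypothesis forces each orbit $\Delta s$ to be finite.

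Given an arbitrary sequence $(\gamma_n) \subseteq \Delta$, the crucial step is a diagonal extraction producing a subsequence $(\gamma_{n_k})$ along which both $\gamma_{n_k}(s)$ and $\gamma_{n_k}^{-1}(s)$ are eventually constant for every $s \in S$. This succeeds because, for each fixed $s$, both sequences take values in the finite orbit $\Delta s$ (using that $\Delta$ is a subgroup for the second). One then sets $\gamma(s) := \lim_k \gamma_{n_k}(s)$ and $\delta(s) := \lim_k \gamma_{n_k}^{-1}(s)$.

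Next I would verify that $\gamma \in \Delta$ and that $\gamma_{n_k} \to \gamma$ in $\dist$. Taking the (eventually constant) limit in $\gamma_{n_k}(\gamma_{n_k}^{-1}(s)) = s = \gamma_{n_k}^{-1}(\gamma_{n_k}(s))$ yields $\gamma \delta = \delta \gamma = \id$, so $\gamma$ is a bijection; closedness of $\Gamma$ in $\Pi_S$ then places $\gamma \in \Gamma$, and $\gamma \in \Delta$ because ultrametric balls are closed. Pointwise stabilisation of $\gamma_{n_k}^{-1}$ on each $S_j$ says exactly that $\gamma_{n_k} \gamma^{-1}$ fixes $S_j$ pointwise for all sufficiently large $k$, so $\dist(\gamma_{n_k}, \gamma) \leq 2^{-j}$ and hence $\gamma_{n_k} \to \gamma$ in the metric.

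The main obstacle is guaranteeing that the pointwise limit is a bijection, so that closedness of $\Gamma$ can be invoked. Extracting only along $\gamma_n(s)$ would in principle yield a function $S \to S$ that need not even be surjective, and hence would not sit inside $\Pi_S$ at all. The device of simultaneously stabilising $\gamma_n^{-1}(s)$ both rescues bijectivity, via the mutually inverse relations above, and supplies the metric convergence essentially for free.
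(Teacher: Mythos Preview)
Your argument is correct and follows the same overall strategy as the paper's proof (sequential compactness via a diagonal extraction), but the execution differs in one genuinely useful way. The paper performs the extraction through the nested-ball structure furnished by Lemma~\ref{lem:fewballs}: it passes to subballs of shrinking radius, reads off the pointwise limit $\gamma$, and then argues surjectivity of $\gamma$ by a separate (and somewhat roundabout) application of subdegree finiteness to the values $\gamma_i^{-1}u$. You instead extract directly on the finite orbits $\Delta s$, and the device of stabilising both $\gamma_{n_k}(s)$ and $\gamma_{n_k}^{-1}(s)$ simultaneously hands you bijectivity for free via the mutually inverse limits $\gamma$ and $\delta$. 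This is cleaner than the paper's surjectivity argument and also makes the metric convergence transparent, since $\gamma_{n_k}\gamma^{-1}$ fixing $S_j$ is exactly the statement that $\gamma_{n_k}^{-1}$ has stabilised on $S_j$. The paper's route, on the other hand, makes the connection to the ultrametric ball structure and Lemma~\ref{lem:fewballs} more explicit.
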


\begin{proof}
Since in a metric space compactness and sequential compactness are equivalent it suffices to show that every sequence has a convergent subsequence. So assume we have a sequence $(\gamma_i)_{i \in \mathbb N}$ of pairwise different permutations all of which lie inside a ball $\Delta$ of radius $\varrho < 1$. 

Let $k_0 \in \mathbb N$ such that $2^{-k_0} < \varrho $. Then by Lemma~\ref{lem:fewballs} $\Delta$ has only finitely many subballs of radius $2^{-k_0}$ and hence we can find an infinite subsequence of $\gamma_i$ which is completely contained in one of the subballs $\Delta_0$, say.

The ball $\Delta_0$ again has only finitely many subballs of radius $2^{-k_0-1}$ so we can find a sub-subsequence which lies completely in a subball $\Delta_1$ of $\Delta_0$. Proceeding inductively we obtain a sequence of nested balls $(\Delta_k)_{k \in \mathbb N}$ in $\Gamma$ where the radius of $\Delta_k$ is $2^{-k_0-k}$.

Now we define a permutation $\gamma$ as follows: to determine $\gamma s$ for $s \in S_{k_0+k}$ look at the coset $\Delta_{k}$. All permutations in this coset map $s$ to the same vertex $t$. Choose $\gamma s = t$. Since the sets $\Delta_k$ are nested $\gamma$ is well defined. 

It follows easily from subdegree finiteness that $\gamma$ is bijective and hence a permutation. Simply observe that if $\gamma_i$ and $\gamma_j$ are in $\Delta _k$ then $\gamma _i s = \gamma _j s$ and hence $\gamma _i^{-1} \gamma _j s = s$ for every $s \in S_0$. By subdegree finiteness there are only finitely many possible values for $\gamma _i^{-1} \gamma _j t$ for every $t \in S$ and hence there are only finitely many values for $\gamma _i^{-1} u$ (recall that $\gamma_j$ is bijective) for every $u \in S$. Now choose $k$ such that all of the possible values are contained in $S_{k_0+k}$. Then all permutations in $\Delta_k$ will map the same vertex to $u$ and hence $u$ has a preimage under $\gamma$.

If we can find a subsequence of $\gamma_i$ which converges to $\gamma$ in the set $\Pi_S$ of all permutations of $S$, then it follows that $\gamma \in \Gamma$ since $\Gamma$ is closed in $Pi_S$. Furthermore in this case we found a convergent subsequence of $\gamma_i$, which completes the proof of the lemma.

To construct such a subsequence choose $i_k$ such that $i_k>i_{k-1}$ and $\gamma_{i_k} \in \Delta_k$. Since $\gamma$ coincides with $\gamma_{i_k}$ on $S_{k+i_0}$ it follows that $\dist(\gamma_{i_k}, \gamma) \to 0$ as $k \to \infty$, so $\gamma_{i_k}$ converges to $\gamma$. 
\end{proof}

We conclude this section with a list of topological properties of $\Gamma$ which follow from the above results by well known theorems from topology. Let $\Gamma$ be a closed, subdegree finite group of permutations of a set $S$. Then each of the following holds:
\begin{itemize}
\item $\Gamma$ is $\sigma$-compact because there are only countably many distinct balls of radius $r<1$ and those balls are compact,
\item Lindelöf because every $\sigma$-compact space is Lindelöf,
\item separable and second countable because in a metric space these properties are equivalent to the Lindelöf property,
\item totally disconnected because in an ultrametric space balls are both open and closed,
\item locally compact because small balls are compact,
\item complete and hence Polish because small balls are compact and every Cauchy sequence will eventually stay within a small ball.
\end{itemize}

\section{Properties of stabiliser subgroups}
\label{sec:stabiliser}

In this section we outline some basic properties of stabiliser subgroups of colourings, partial colourings, and subsets of $S$. We start with a well known result about the stabiliser of a single element $s$ of $S$. 

\begin{lem}
\label{lem:stab_v}
Let $\Gamma$ be a closed, subdegree finite group of permutations of a countable set $S$. Then for every $s \in S$ the stabiliser $\Gamma_s$ is a compact subgroup of $\Gamma$.
\end{lem}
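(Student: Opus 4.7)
The plan is to exhibit $\Gamma_s$ as a ball of radius strictly less than $1$ with respect to one of the metrics constructed in Section~\ref{sec:metric}, and then invoke Lemma~\ref{lem:compactballs}. The advantage of this route is that the induced topology (and hence compactness) does not depend on the chosen exhausting sequence $(S_i)_{i \in \mathbb N}$, so we may pick a sequence tailored to the point $s$.

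First I would dispatch the subgroup claim, which is standard: $\id \in \Gamma_s$; if $\gamma, \delta$ both fix $s$, then so does $\gamma \delta$; and $\gamma s = s$ forces $\gamma^{-1} s = s$. This part requires no use of the hypotheses on $\Gamma$.

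For compactness, choose an exhausting sequence $(S_i)_{i \in \mathbb N}$ of finite subsets of $S$ with $S_1 = \{s\}$, which is possible because $S$ is countable. Using the metric $\dist$ associated with this sequence, we have $\dist(\gamma, \id) \leq 2^{-1}$ if and only if $\gamma$ fixes $S_1$ pointwise, i.e. if and only if $\gamma s = s$. Hence $\Gamma_s$ coincides with the closed $\dist$-ball of radius $2^{-1}$ centred at $\id$. Since $2^{-1} < 1$ and $\Gamma$ is assumed closed and subdegree finite, Lemma~\ref{lem:compactballs} applies and this ball is compact.

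There is essentially no obstacle here; the only point worth emphasising is a small bookkeeping matter. To identify $\Gamma_s$ with the pointwise stabiliser of $S_1$ (and thus with a $\dist$-ball) one really does need $S_1 = \{s\}$, not merely $s \in S_1$: otherwise $\Gamma_{(S_1)}$ would be a proper subgroup of $\Gamma_s$. Because the permutation topology does not depend on the specific choice of the sequence, we are free to make this adjustment, so the argument goes through cleanly.
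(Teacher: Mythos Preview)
Your proof is correct and follows essentially the same route as the paper: choose the exhausting sequence with $S_1 = \{s\}$, identify $\Gamma_s$ with the ball of radius $\tfrac12$ around $\id$, and apply Lemma~\ref{lem:compactballs}. Your additional remark that one genuinely needs $S_1 = \{s\}$ (not merely $s \in S_1$) is a fair clarification but does not deviate from the paper's argument.
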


\begin{proof}
It is clear that the stabiliser must be a subgroup of $\Gamma$ so we only need to show that it is compact.
In the construction of the metric choose $S_1 = \{s\}$. Then $\Gamma_s$ is the ball centered at $\id$ with radius $\varrho = \frac 1 2 $. Hence it is compact by Lemma~\ref{lem:compactballs}.
\end{proof}

A similar result can also be obtained for the setwise stabiliser of a finite subset $S' \subseteq S$. In fact, the following lemma exactly tells us, when a closed and subdegree finite group of permutations of a countable set is compact.

\begin{lem}
\label{lem:whencompact}
Let $\Gamma$ be a closed, subdegree finite group of permutations of a countable set $S$. Then the following are equivalent:
\begin{enumerate}
\item \label{itm:whencompact:compact} $\Gamma$ is compact.
\item \label{itm:whencompact:setwise} $\Gamma$ stabilises some finite subset $S'$ of $S$ setwise.
\item \label{itm:whencompact:someorbit} The orbit of some element $s \in S$ is finite.
\item \label{itm:whencompact:allorbits} All orbits under the action of $\Gamma$ are finite.
\end{enumerate}
\end{lem}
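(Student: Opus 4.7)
The plan is to prove the cyclic chain $(1) \Rightarrow (3) \Rightarrow (4) \Rightarrow (2) \Rightarrow (1)$, exploiting the fact that for a suitable choice of the defining sequence $(S_i)$ the pointwise stabiliser of a finite subset of $S$ is exactly the ball of radius $\tfrac{1}{2}$ about $\id$, and hence is compact by Lemma~\ref{lem:compactballs}.

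For $(1) \Rightarrow (3)$ I would fix any $s \in S$ and choose $(S_i)$ with $S_1 = \{s\}$. Then $\Gamma_s$ is the ball of radius $\tfrac{1}{2}$ around $\id$, so it is open and closed in $\Gamma$, and its cosets partition $\Gamma$ into pairwise disjoint nonempty open sets. Compactness forces the partition to be finite, so by the orbit-stabiliser theorem $|\Gamma s| = [\Gamma : \Gamma_s] < \infty$. For $(3) \Rightarrow (4)$ assume $\Gamma s_0$ is finite and let $\gamma_1, \dots, \gamma_n$ be coset representatives of $\Gamma_{s_0}$ in $\Gamma$. For any $t \in S$ the orbit $\Gamma_{s_0} t$ is finite by subdegree finiteness, so
\[
\Gamma t \;=\; \bigcup_{i=1}^n \gamma_i \bigl( \Gamma_{s_0} t \bigr)
\]
is a finite union of finite sets.

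For $(4) \Rightarrow (2)$ it is enough to take $S' = \Gamma s$ for any $s \in S$, which is finite by hypothesis and setwise stabilised by definition. For $(2) \Rightarrow (1)$ I would choose $(S_i)$ with $S_1 = S'$; then $\Gamma_{(S')}$ is once again the ball of radius $\tfrac{1}{2}$ around $\id$, hence compact. The restriction map $\Gamma \to \mathrm{Sym}(S')$ has kernel $\Gamma_{(S')}$ and image in a finite symmetric group, so $\Gamma_{(S')}$ has finite index in $\Gamma$. Hence $\Gamma$ is a finite union of translates of the compact set $\Gamma_{(S')}$, and each translate is compact because left multiplication is an isometry. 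A finite union of compact sets is compact, which gives $(1)$.

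The only step that is not immediately topological or group-theoretic bookkeeping is $(3) \Rightarrow (4)$: finiteness of a single orbit does not in general imply finiteness of every orbit, and it is precisely subdegree finiteness that rescues the implication. Everywhere else the work has already been done by Lemma~\ref{lem:compactballs} together with the identification of pointwise stabilisers of finite sets as metric balls; I expect no further obstacles.
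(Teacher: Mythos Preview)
Your proof is correct and uses the same core ingredients as the paper: the identification of pointwise stabilisers of finite sets with small metric balls (hence compact by Lemma~\ref{lem:compactballs}), and the passage to compactness of $\Gamma$ via a finite union of cosets. The organisation differs slightly. The paper runs the cycle as $(3)\Rightarrow(1)\Rightarrow(4)$ (together with the easy equivalences $(4)\Rightarrow(3)\Leftrightarrow(2)$), deducing $(1)\Rightarrow(4)$ by a sequential-compactness argument: an infinite orbit would yield a sequence of permutations with a uniform lower bound on pairwise distances and hence no convergent subsequence. You instead run $(1)\Rightarrow(3)\Rightarrow(4)\Rightarrow(2)\Rightarrow(1)$, obtaining $(1)\Rightarrow(3)$ from the open coset cover and proving $(3)\Rightarrow(4)$ directly from subdegree finiteness rather than routing through compactness. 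Your direct $(3)\Rightarrow(4)$ is a small gain in transparency---it isolates exactly where subdegree finiteness enters---while the paper's sequential argument for $(1)\Rightarrow(4)$ avoids the orbit--stabiliser bookkeeping. Either route is entirely satisfactory.
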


\begin{proof}
Clearly \ref{itm:whencompact:allorbits} $\Rightarrow$ \ref{itm:whencompact:someorbit}. The implication \ref{itm:whencompact:someorbit} $\Rightarrow$ \ref{itm:whencompact:setwise} follows from the fact that $\Gamma$ stabilises every orbit setwise. The converse implication follows from the fact that the orbit of $s \in S'$ must be contained in $S'$ if the set is setwise stabilised. So we only need to show the implications \ref{itm:whencompact:someorbit} $\Rightarrow$ \ref{itm:whencompact:compact} $\Rightarrow$ \ref{itm:whencompact:allorbits} in order to prove the equivalence of the statements.

First assume that there is some $s \in S$ such that the orbit $\Gamma s$ is finite. Clearly $\Gamma$ is the union of the (finitely many) cosets with respect to the stabiliser $\Gamma_s$. All of the cosets are compact because the stabiliser is compact by Lemma~\ref{lem:stab_v}. Hence we have decomposed $\Gamma$ into finitely many compact sets and thus $\Gamma$ itself must be compact.

Conversely, let $\Gamma$ be compact and assume that there is some $s \in S$ whose orbit is infinite. Then we can find an infinite sequence $(\gamma_i)_{i \in \mathbb N}$ of permutations in $\Gamma$ such that no two permutations map $s$ to the same point. Since $\Gamma$ is compact this sequence must have a convergent subsequence which is impossible because no two permutations coincide on $s$ which gives a lower bound on their distance.
\end{proof}

Next we would like to turn to stabilisers of colourings of $S$. In general such a stabiliser will not be compact, but we can show that it is always a closed subgroup of $\Gamma$.

\begin{lem}
\label{lem:stab_c}
Let $\Gamma$ be a group of permutations of a countable set $S$. Then the stabiliser $\Gamma_c$ of a colouring $c$ of $S$ is a closed  subgroup of $\Gamma$.
\end{lem}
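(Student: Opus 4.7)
The proof splits into two routine verifications: that $\Gamma_c$ is closed under the group operations, and that it is closed as a subset of $\Gamma$.

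For the subgroup property, the plan is to use associativity of the right action $(c\gamma)(s) = c(\gamma s)$, from which $c(\gamma_1 \gamma_2) = (c\gamma_1)\gamma_2$. The identity clearly preserves $c$. If $\gamma_1, \gamma_2 \in \Gamma_c$, then $c(\gamma_1\gamma_2) = (c\gamma_1)\gamma_2 = c\gamma_2 = c$, giving closure under multiplication. For inverses, if $c\gamma = c$, then applying $\gamma^{-1}$ on the right yields $c = c\gamma\gamma^{-1} = c\gamma^{-1}$, so $\gamma^{-1}\in\Gamma_c$.

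For closedness, I would argue as follows. Write
\[
\Gamma_c = \bigcap_{s \in S} \{\gamma \in \Gamma \mid c(\gamma s) = c(s)\}.
\]
Each set in the intersection is a union of cosets of the pointwise stabiliser $\Gamma_{(\{s\})} = \Gamma_s$, namely the union over $t \in c^{-1}(c(s))$ of the cosets $\{\gamma \in \Gamma \mid \gamma s = t\}$. By Lemma~\ref{lem:stab_v} applied via the metric (taking $S_1 = \{s\}$) each such coset is an open ball of radius $\tfrac{1}{2}$, hence open. Thus each of the sets $\{\gamma \mid c(\gamma s) = c(s)\}$ is open, and its complement is likewise a union of such balls, so the set is in fact clopen. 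The intersection of closed sets is closed, so $\Gamma_c$ is closed.

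Alternatively, a quick sequential argument works: if $(\gamma_n) \subset \Gamma_c$ converges to $\gamma \in \Gamma$ in the permutation topology, then for every $s \in S$ one has $\gamma_n s = \gamma s$ for all $n$ sufficiently large, so $c(\gamma s) = c(\gamma_n s) = c(s)$, and hence $\gamma \in \Gamma_c$. Either route is essentially immediate; there is no genuine obstacle here, the point of the lemma being only to record this basic fact for reference in later sections.
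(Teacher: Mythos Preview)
Your proof is correct and follows essentially the same elementary route as the paper: the paper shows directly that the complement of $\Gamma_c$ is open by picking $\gamma \notin \Gamma_c$, choosing $s$ with $c(s)\neq c(\gamma s)$, and observing that any $\gamma'$ agreeing with $\gamma$ on a large enough $S_i$ also fails to preserve $c$; your intersection-of-clopen-sets and sequential arguments are just repackagings of this same observation. One small quibble: your appeal to Lemma~\ref{lem:stab_v} is misplaced, since that lemma concerns compactness and assumes $\Gamma$ is closed and subdegree finite, neither of which is hypothesised here---what you actually need (and use) is simply that cosets of a point stabiliser $\Gamma_s$ are open balls, which follows directly from the definition of the metric with $S_1=\{s\}$.
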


\begin{proof}
Again it is clear that the stabiliser of $c$ is a subgroup of $\gamma$ since $c \gamma = c \circ \gamma$ defines a right action of $\Gamma$ on the set $\mathcal C (S)$ of colourings of $S$. Hence we only need to show that it is closed.

Consider a permutation $\gamma \notin \Gamma_c$. There must be some $s \in S$ such that $c(s) \neq c(\gamma s)$. This point is contained in some set $S_i$, where $(S_i)_{i \in \mathbb N}$ is the nondecreasing sequence of finite subsets of $S$, which was used to construct the metric in Section~\ref{sec:metric}. Now, every permutation $\gamma'$ with $\dist(\gamma', \gamma)<2^{-i}$ coincides with $\gamma$ on $S_i$. This implies that no permutation in the ball $B_{\gamma} (2^{-i})$ is contained in $\Gamma_c$. So $\gamma$ has an open neighbourhood  which is disjoint to $\Gamma_c$ and hence the complement of $\Gamma_c$ is open.
\end{proof}

What happens if we consider partial colourings instead of colourings? It is readily verified that the stabiliser of a partial colouring $c'$ is in general not a subgroup of $\Gamma$, so we cannot hope for a verbatim extension of Lemma~\ref{lem:stab_c} to partial colourings. But it turns out that apart from the group property everything generalises nicely. If the domain of the partial colouring is finite we even get a better result: in this case the stabiliser will be a set that is both closed and open in the permutation topology.

\begin{lem}
\label{lem:stab_pc}
Let $\Gamma$ be a group of permutations of a countable set $S$ and let $c'$ be a partial colouring of $S$. Then the stabiliser of $c'$ is closed. If the domain of $c'$ is finite then the stabiliser is also open.
\end{lem}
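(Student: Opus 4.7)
The plan is to translate the (slightly awkward) definition of preservation into a concrete pointwise condition on $\gamma$, and then exploit that both preservation and its failure are ``finitely witnessed'' in the permutation topology, mirroring the argument in the proof of Lemma~\ref{lem:stab_c}.

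First I would establish the following characterisation: $\gamma$ preserves $c'$ if and only if $c'(s) = c'(\gamma s)$ for every $s \in S'$ with $\gamma s \in S'$. The ``only if'' direction is immediate from $(c_1 \gamma)(s) = c_1(\gamma s) = c_2(s) = c'(s)$ together with $c_1$ extending $c'$. For the converse, the condition is exactly what is needed to make the prescription $c_1(s) = c'(s)$ for $s \in S'$ and $c_1(\gamma s) = c'(s)$ for $s \in S'$ consistent on the overlap $S' \cap \gamma S'$; one then extends $c_1$ arbitrarily to all of $S$ and sets $c_2 = c_1 \circ \gamma$.

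Given this criterion, closedness follows as in Lemma~\ref{lem:stab_c}: if $\gamma \notin \Gamma_{c'}$, pick a witness $s \in S'$ with $\gamma s \in S'$ and $c'(s) \neq c'(\gamma s)$; the set $\{\gamma' : \gamma' s = \gamma s\}$ is an open neighbourhood of $\gamma$ on which $\gamma' s = \gamma s \in S'$ with $c'(\gamma' s) \neq c'(s)$, so none of its members preserve $c'$. For openness when $S'$ is finite, take $\gamma \in \Gamma_{c'}$ and let $U = \{\gamma' : \gamma' s = \gamma s \text{ for all } s \in S'\}$, which is open as a finite intersection of basic open sets. For any $\gamma' \in U$ and any $s \in S'$ with $\gamma' s \in S'$, the equality $\gamma' s = \gamma s$ forces $\gamma s \in S'$, and then $c'(\gamma' s) = c'(\gamma s) = c'(s)$ since $\gamma \in \Gamma_{c'}$, so $U \subseteq \Gamma_{c'}$.

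The main (mild) obstacle is the first step: the definition allows $c_1$ and $c_2$ to disagree outside $S'$, and this flexibility is exactly what makes the converse implication in the characterisation work. Once the criterion is pinned down, both statements reduce to the fact that any single equality of the form $\gamma' s = t$ defines an open condition.
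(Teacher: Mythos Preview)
Your proof is correct and follows essentially the same strategy as the paper: reduce preservation of $c'$ to a finitely-witnessed pointwise condition, and then use that a single equality $\gamma' s = t$ defines an open condition in the permutation topology. Your pointwise characterisation $c'(s)=c'(\gamma s)$ for all $s\in S'$ with $\gamma s\in S'$ is a slightly cleaner reformulation of the paper's criterion in terms of a colouring of $T=S'\cup\gamma^{-1}S'$, and the open/closed argument in the finite case is the same in substance. The only real difference is in the infinite-domain case: you argue directly that the complement is open by exhibiting a witness $s\in S'$ with $\gamma s\in S'$ and $c'(s)\neq c'(\gamma s)$, whereas the paper instead writes $\Gamma_{c'}=\bigcap_i \Gamma_{c'_i}$ as an intersection of finite-domain stabilisers already shown to be closed. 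Both routes are short; yours is marginally more direct, while the paper's reduction makes the dependence on the finite case explicit.
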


\begin{proof}
Denote by $S'$ the domain of $c'$. Clearly, a permutation $\gamma \in \Gamma$ preserves $c'$ if and only if there is a colouring $c''$ of the set  
\[
T = S' \cup \gamma^{-1}S'
\]
such that for every $s \in S'$ it holds that $c'' (\gamma s) = c''(s) = c'(s)$.

If $S'$ is finite then so is $T$ and hence $T$ is contained in $S_i$ for some $i \in \mathbb N$. Consider a permutation $\gamma'$ such that $\dist (\gamma, \gamma') < 2^{-i}$. It follows from the definition of $\dist$ that $\gamma' s = \gamma s$ for every $s \in T$. Hence a colouring of $T$ with the above property exists for $\gamma$ if and only if it exists for $\gamma'$. It follows that if $\gamma \in \Gamma_{c'}$ then the ball with center $\gamma$ and radius $2^{-i}$ is completely contained in the stabiliser of $c'$ showing that the stabiliser is open. Conversely, if $\gamma \notin \Gamma_{c'}$ then this ball will be completely contained in the complement of the stabiliser, proving that the complement is open as well.

Now let us turn to the case where $S'$ is infinite. In this case choose a sequence $S_i'$ of finite subsets of $S'$ such that $S_i' \subseteq S_{i+1}'$ and $\lim_{i \to \infty} S_i' = S'$. Let $c_i'$ be the colouring with domain $S_i'$ which coincides with $c'$ on $S_i'$. We know that $\Gamma_{c_i'}$ is closed because of the first part of the proof. If we can show that $\Gamma_{c'} = \bigcap _{i \in \mathbb N} \Gamma_{c_i'}$ then it is closed because it is the intersection of closed sets.

But this is easy: if a permutation is contained in $\Gamma_{c'}$ then it is clearly contained in every $\Gamma_{c_i'}$ (simply use the same colourings to extend $c'$ and $c_i'$). If a permutation $\gamma$ is not contained in $\Gamma_{c'}$ then this means that there is no partial colouring with domain $T$ such that $c''(\gamma s) = c''(s)$ for each $s \in S'$. since we can colour every $s \in T \setminus S'$ arbitrarily this implies that there are two elements $s,t \in S'$ with different colours such that $\gamma s = t$. now choose $i$ large enough that $s,t \in S_i'$. Clearly $\gamma \notin \Gamma_{c_i'}$ and hence $\gamma$ is not contained in the intersection.
\end{proof}

\section{Random colourings}
\label{sec:randomcolour}

In this section we investigate properties of random colourings with respect to permutation breaking. By a random colouring we mean a $2$-colouring of the set $S$ where the colour of every element $s \in S$ is chosen independently and uniformly. The probability space that we obtain this way is $\{0,1\}^{\vert V \vert}$ with the product probability measure denoted by $\prob$.

The motivation to use random colourings comes from the following lemma due to Russel and Sundaram \cite{MR1617449}, or more precisely from its proof, which uses random colourings to obtain a distinguishing colouring for a finite graph with large motion.

\begin{lem}
\label{lem:motion}
Let $G$ be a graph with motion $m$ and assume that $2^{\frac m 2} \geq \vert \Aut G\vert $. Then there is a distinguishing $2$-colouring of $G$.
\end{lem}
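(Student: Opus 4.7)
The plan is a standard probabilistic argument: colour each vertex of $G$ independently and uniformly at random from $\{0,1\}$, and show that with positive probability the resulting colouring is distinguishing. Concretely, I will use the union bound to control the expected number of non-trivial automorphisms preserving a random colouring, and conclude that this expectation is strictly less than $1$.

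First I would fix a non-trivial $\sigma \in \Aut G$ with support of size $k \geq m$. The key observation is that a colouring $c$ is preserved by $\sigma$ if and only if $c$ is constant on every orbit of the cyclic group $\langle \sigma \rangle$ acting on $V(G)$; the trivial orbits (fixed points) impose no condition. If the non-trivial orbits have sizes $k_1, k_2, \dots, k_r$, then $\sum_i k_i = k \geq m$ and each $k_i \geq 2$, hence $r \leq k/2$. For a uniform random colouring, the orbits are coloured independently, and each non-trivial orbit of size $k_i$ is monochromatic with probability $2^{1-k_i}$. Multiplying gives
\[
\prob(\sigma \text{ preserves } c) = \prod_{i=1}^r 2^{1-k_i} = 2^{r-k} \leq 2^{-k/2} \leq 2^{-m/2}.
\]

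Next I would apply the union bound over the $|\Aut G|-1$ non-trivial automorphisms. The expected number of non-trivial automorphisms preserving $c$ is at most
\[
(|\Aut G|-1)\cdot 2^{-m/2} < |\Aut G| \cdot 2^{-m/2} \leq 1,
\]
where the final inequality uses the hypothesis $2^{m/2} \geq |\Aut G|$. Since this expectation is strictly less than $1$, there must exist a colouring $c$ for which no non-trivial automorphism is preserving, i.e.\ a distinguishing $2$-colouring.

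No serious obstacle is anticipated: the only subtle point is to make sure that the orbit-counting argument gives the bound $2^{-m/2}$ uniformly over all non-trivial $\sigma$ (which follows immediately from $r \leq k/2$), and to obtain a \emph{strict} inequality in the union bound (which is why we bound $(|\Aut G|-1)$ by $|\Aut G|$ before invoking the hypothesis). The implicit finiteness of $G$ follows from $|\Aut G| \leq 2^{m/2} < \infty$, ensuring both the union bound and the probabilistic method apply without measure-theoretic concerns.
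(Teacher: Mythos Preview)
Your proposal is correct and follows essentially the same probabilistic argument as the paper: bound $\prob[c\varphi=c]\le 2^{-m/2}$ via the cycle/orbit structure of a non-trivial automorphism, then apply the union bound over $\Aut G\setminus\{\id\}$. If anything, your treatment of the per-automorphism bound is slightly more careful than the paper's, since you explicitly track the actual support size $k\ge m$ and deduce $r\le k/2$ before passing to $2^{-k/2}\le 2^{-m/2}$, whereas the paper's phrasing (``at most $\frac m2$ cycles of length $\ge 2$'') tacitly assumes the support has size exactly $m$.
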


\begin{proof}
As we mentioned before this fact can be shown using random colourings. Let $\varphi \in \Aut G \setminus \{ \id \}$. We know that $\varphi$ moves at least $m$ vertices, which implies that there are at most $\frac m 2$ cycles of length $\geq 2$ and $(n - m)$ singleton cycles in the corresponding permutation. For a random colouring $c$ we have
\begin{align*}
\prob [c \varphi = c] 
&= \prob [\text{all cycles are monochromatic}]\\
&\leq  \frac 1 {2^n} \left( 2^{n-\frac{m}{2}} \right)\\
&= 2^{-\frac m 2}
\end{align*}
Summing up those estimates for $\varphi \in \Aut G \setminus \{\id \}$ we get that

\begin{align*}
\prob [\exists \varphi \colon c \varphi = c] 
&\leq \sum_{\varphi \in \Aut G \setminus \{\id \}} \prob [ c \varphi = c]\\
&\leq \sum_{\varphi \in \Aut G \setminus \{\id \}} 2^{-\frac m 2}\\
&\leq \left( 2^{\frac m 2}-1\right) 2^{-\frac m 2}\\
&<1.
\end{align*}
So a random colouring has a positive probability of being distinguishing and hence there must be such a colouring.
\end{proof}

Notice that the proof does not use the graph structure or group structure in any way. Hence we can apply the same arguments to prove the following (stronger) statement.

\begin{lem}
Let $S$ be a set and let $\Delta$ be a set of permutations of $S$ with motion $m$. Assume that $2^{\frac m 2} < \vert \Delta \vert$. Then there is a distinguishing $2$-colouring. {\hfill \qedsymbol}
\end{lem}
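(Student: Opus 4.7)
The plan is to replay the probabilistic argument of Lemma~\ref{lem:motion} verbatim. As the excerpt explicitly observes, that proof uses neither the graph structure nor the fact that $\Aut G$ is closed under composition; one only needs a collection of permutations of $S$ whose nontrivial elements each move at least $m$ points. I would therefore colour $S$ uniformly at random by choosing $c(s) \in \{0,1\}$ independently for every $s \in S$ under the product measure $\prob$, and estimate the probability that some nontrivial $\varphi \in \Delta$ preserves $c$.

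Fix a nontrivial $\varphi \in \Delta$ and consider its cycle decomposition restricted to its support. Since $\varphi$ moves at least $m$ elements of $S$ and every nontrivial cycle has length at least $2$, there are at most $m/2$ nontrivial cycles. The random colouring $c$ is $\varphi$-invariant if and only if $c$ is constant on every cycle of $\varphi$; counting one free bit per cycle (and noting that the colours of elements outside the support of $\varphi$ are irrelevant to $\varphi$-invariance, so one can condition them away) yields $\prob[c\varphi = c] \leq 2^{-m/2}$, exactly the bound from Lemma~\ref{lem:motion}. Summing over the nontrivial elements of $\Delta$ via the union bound then gives
\[
\prob\big[\exists\, \varphi \in \Delta \setminus \{\id\} \colon c\varphi = c\big] \;\leq\; (|\Delta|-1) \cdot 2^{-m/2},
\]
and the cardinality assumption relating $2^{m/2}$ and $|\Delta|$ is invoked to force this bound strictly below $1$. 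Thus a random $c$ is $\Delta$-distinguishing with positive probability, and in particular such a colouring exists.

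The delicate point, and the step where the printed hypothesis must be brought to bear, is aligning $(|\Delta|-1) \cdot 2^{-m/2}$ with the threshold $1$: the union bound closes exactly when $|\Delta| \leq 2^{m/2}$, so any refinement of the argument (for instance a tighter per-$\varphi$ estimate that replaces the bound $2^{-m/2}$ by $\prod_\ell 2^{-(\ell-1)}$ over cycle lengths $\ell \geq 2$) has to buy us the slack required to match the printed inequality. A secondary subtlety, if $|\Delta|$ is infinite, is that the countable union bound is only meaningful in the regime where $|\Delta|\cdot 2^{-m/2}$ is finite; this will be automatic in the only interesting range, where the cardinality hypothesis already forces $|\Delta|$ to sit right at the $2^{m/2}$ scale, so no extra work beyond truncating to a suitable finite $\Delta_n \subseteq \Delta$ and taking a limit is needed.
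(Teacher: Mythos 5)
Your argument is exactly the paper's: the lemma is printed without proof precisely because the proof of Lemma~\ref{lem:motion} is meant to carry over verbatim, and your per-permutation bound $\prob[c\varphi=c]\le 2^{-m/2}$ followed by a union bound over the nontrivial elements of $\Delta$ is that proof. Two corrections to your closing discussion, though. First, the inequality $2^{m/2}<\vert\Delta\vert$ in the printed statement is a typo for $2^{m/2}\ge\vert\Delta\vert$ (compare Lemma~\ref{lem:motion}); your instinct that the union bound closes exactly when $\vert\Delta\vert\le 2^{m/2}$ is the correct reading, and no refinement of the cycle estimate (such as $\prod_\ell 2^{-(\ell-1)}$) can rescue the statement as printed, because it is simply false under that hypothesis: take $\Delta$ to be the full symmetric group on a set with at least three elements, so that $m=2$ and $2^{m/2}=2<\vert\Delta\vert$, yet every $2$-colouring has a colour class of size at least two and is preserved by a transposition inside it. Second, the proposed treatment of infinite $\Delta$ by truncating to finite subsets and passing to a limit does not work: breaking each finite subset with positive probability does not produce one colouring breaking all of $\Delta$ simultaneously. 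Under the corrected hypothesis this is a non-issue when $m$ is finite, since then $\Delta$ is finite; when $m$ is infinite each summand is $0$ but the sum over an uncountable $\Delta$ is meaningless, which is exactly why Conjectures~\ref{con:tucker} and~\ref{con:random} do not follow from this lemma and remain open.
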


For locally finite graphs with infinite motion the inequality in the statement of the previous lemma (seen as an inequality of infinite cardinals) is always fulfilled. So Conjecture~\ref{con:tucker} can be seen as an infinite analogue to Lemma~\ref{lem:motion}. Hence it is only natural to ask whether a similar proof can be used in the locally finite case. If the inequality is strict then the following theorem of Halin \cite{MR0335368}, which is independent from the continuum hypothesis tells us that the automorphism group must be countable.

\begin{thm}
\label{thm:halin}
Let $G$ be a locally finite graph then $\vert \Aut G \vert < 2^{\vert \mathbb N \vert}$ if and only if there is a finite subset of $V$ whose pointwise stabiliser is trivial. {\hfill \qedsymbol}
\end{thm}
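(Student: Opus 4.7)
The plan is to argue the two implications separately by quite different methods. For the easy ``if'' direction, suppose $F \subseteq V$ is a finite set with trivial pointwise stabiliser. Then the restriction map $\gamma \mapsto \gamma|_F$ embeds $\Aut G$ into the set of maps $F \to V$, so $|\Aut G| \leq |V|^{|F|}$. When $V$ is countable this immediately yields $|\Aut G| \leq \aleph_0 < 2^{|\mathbb N|}$; in the disconnected case with uncountably many components one first observes that the triviality of the stabiliser forces every component disjoint from $F$ to have trivial automorphism group and to be pairwise non-isomorphic, so in fact each $\gamma(v)$ with $v \in F$ is confined to the finitely many (countable) components meeting $F$.

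For the ``only if'' direction I argue by contraposition: assume every finite subset of $V$ has non-trivial pointwise stabiliser and construct $2^{|\mathbb N|}$ distinct automorphisms. Enumerate the vertex set (reducing to a countable invariant subset if necessary) as $v_0, v_1, \ldots$ and let $K_n = (\Aut G)_{(\{v_0,\ldots,v_n\})}$. This is a descending chain of closed subgroups of $\Aut G$ (each being an intersection of closed point stabilisers) with $\bigcap_n K_n = \{\id\}$ by faithfulness of the action. The hypothesis that every $K_n$ is non-trivial, combined with triviality of the intersection, forces $K_n \supsetneq K_{n+1}$ for infinitely many $n$; passing to this subsequence I may assume $[K_n : K_{n+1}] \geq 2$ at every step.

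The heart of the argument is then a binary-tree construction carried out inside $K_0 = (\Aut G)_{v_0}$, which is compact by Lemma~\ref{lem:stab_v}. For each $\sigma \in \{0,1\}^{\mathbb N}$ I select, inductively, a descending chain of non-empty closed cosets $C^\sigma_0 \supseteq C^\sigma_1 \supseteq \cdots$ in $K_0$, where $C^\sigma_n$ is a coset of $K_n$ and the bit $\sigma_n$ picks one of (at least) two cosets of $K_{n+1}$ contained in $C^\sigma_n$. By compactness of $K_0$, the intersection $\bigcap_n C^\sigma_n$ is non-empty, and distinct sequences $\sigma \neq \tau$ produce disjoint intersections, since they diverge at some level into disjoint cosets of some $K_m$. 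Selecting one element from each intersection yields $2^{|\mathbb N|}$ pairwise distinct elements of $\Aut G$.

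The principal obstacle is turning a branch of the tree into a genuine automorphism rather than merely a formal descending chain of cosets: without compactness of $K_0$ such a chain of non-empty closed sets could have empty intersection and the construction would collapse. This is precisely the content of Lemma~\ref{lem:stab_v}, and it is also where local finiteness of $G$ enters essentially, via subdegree finiteness of $\Aut G$. A minor technical point worth handling at the beginning is the reduction to a countable enumeration, which can be achieved by restricting attention to a suitably chosen connected component or orbit closure on which the ``every finite set has non-trivial pointwise stabiliser'' hypothesis persists.
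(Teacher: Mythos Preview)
The paper does not actually prove this theorem: it is stated with a terminal \qedsymbol\ and attributed to Halin. The only argument the paper supplies is the short remark following the statement, which covers precisely your ``if'' direction (an automorphism is determined by the image of the finite set, and there are only countably many such images). Your treatment of that direction matches the paper's remark; your additional care about a possibly uncountable vertex set is not something the paper addresses at all.

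Your ``only if'' direction therefore goes well beyond what the paper offers. The argument you give --- a Cantor-scheme of nested cosets of the pointwise stabilisers $K_n$, with compactness of $K_0 = (\Aut G)_{v_0}$ (Lemma~\ref{lem:stab_v}) guaranteeing that each branch has non-empty intersection --- is correct for countable $V$ and is in fact the standard way to obtain such dichotomies; it is essentially the mechanism behind the theorem of Evans that the paper quotes immediately afterwards. One small comment: rather than passing to a subsequence to force $[K_n:K_{n+1}]\geq 2$, you could simply take $S_n=\{v_0,\dots,v_n\}$ in the metric of Section~\ref{sec:metric}, so that the $K_n$ are exactly the balls of radius $2^{-n}$ about $\id$; then non-triviality of every $K_n$ together with Lemma~\ref{lem:fewballs} and Lemma~\ref{lem:compactballs} already gives the binary splitting and compactness you need without any reindexing. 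The reduction to countable $V$ that you flag as a ``minor technical point'' is genuinely needed for Lemma~\ref{lem:stab_v} to apply, but in the paper's context (and in Halin's) the graph is implicitly connected, hence countable.
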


Clearly, if there is such a set then an automorphism is uniquely determined by the image of this set. Since there are only countably many possibilities to map a finite set to a countable set the automorphism group can be at most countable.

A similar thing holds---again independently from the continuum hypothesis---for closed permutation groups by the following result of Evans \cite{0635.20001}.

\begin{thm}
If $\Gamma$ and $\Delta$ are closed permutation groups on a countable set $S$ and $\Delta \subseteq \Gamma$, then either $\vert \Gamma : \Delta \vert = 2^{\aleph_0}$ or $\Delta$ contains the pointwise stabiliser of some finite set in $\Gamma$. {\hfill \qedsymbol}
\end{thm}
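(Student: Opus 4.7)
I prove the contrapositive: assume $\Delta$ contains no pointwise stabiliser $\Gamma_{(F)}$ of a finite $F\subseteq S$; I want to conclude $\lvert\Gamma:\Delta\rvert = 2^{\aleph_0}$.

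Step one uses Baire category. The stabilisers $\Gamma_{(S_n)}$ of an exhaustion of $S$ form a neighbourhood basis of $\id$ in the permutation topology, so the hypothesis says precisely that $\Delta$ contains no open neighbourhood of $\id$. Since a subgroup of a topological group is open if and only if it has nonempty interior, $\Delta$ has empty interior, and every left coset $\gamma\Delta$, being a homeomorphic translate of $\Delta$, is closed with empty interior---i.e.\ nowhere dense. The ambient group $\Gamma$, closed in the Polish group $\Pi_S$, is itself a Baire space, and hence cannot be a countable union of nowhere dense closed sets. So $\lvert\Gamma:\Delta\rvert$ must be uncountable.

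Step two bootstraps ``uncountable'' to exactly $2^{\aleph_0}$. The coset space $\Gamma/\Delta$ inherits a Polish topology (standard fact: quotients of Polish groups by closed subgroups are Polish), and by the perfect set theorem every uncountable Polish space contains a homeomorphic copy of $2^{\mathbb{N}}$, hence has cardinality $2^{\aleph_0}$. Alternatively one can do this directly with a Cantor scheme: recursively pick $\eta_\sigma\in\Gamma_{(S_{n_{|\sigma|}})}\setminus\Delta$ (available by hypothesis), set $\gamma_{\sigma 0} = \gamma_\sigma$ and $\gamma_{\sigma 1} = \gamma_\sigma\eta_\sigma$, and choose each $n_{k+1}$ large enough that closedness of $\Delta$ and continuity of the group operations keep the distinction $\gamma_{\sigma 0}^{-1}\gamma_{\sigma 1} = \eta_\sigma\notin\Delta$ stable under further refinement; distinct branches then produce distinct right cosets.

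The main obstacle lies in step two: the abstract route relies on descriptive-set-theoretic machinery (the Polishness of the coset space), while the direct Cantor scheme has to be arranged so that the ultrametric structure together with continuity of the group operations genuinely preserves coset separation in the limit, and so that the limit of each branch truly lives in $\Gamma$. Either way, step one is routine and step two is where the real content sits.
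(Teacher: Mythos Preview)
The paper does not prove this theorem; it is quoted from Evans and marked with a \qedsymbol\ immediately after the statement. So there is no in-paper argument to compare your proposal against.

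That said, your argument is the standard one and is correct. Step one is clean: the hypothesis says $\Delta$ contains no basic open neighbourhood of $\id$, hence is not open, hence (being a subgroup) has empty interior; each coset is then closed and nowhere dense, and Baire in the Polish group $\Gamma$ rules out a countable coset decomposition. For step two the descriptive-set-theoretic route you mention is the efficient one: the quotient of a Polish group by a closed subgroup is Polish, and an uncountable Polish space has size $2^{\aleph_0}$ by the perfect set property. Your alternative Cantor scheme can also be made rigorous, but as you correctly flag, one must choose $n_{k+1}$ large enough at each stage so that the open condition witnessing $\eta_\sigma\notin\Delta$ survives right-multiplication by anything in $\Gamma_{(S_{n_{k+1}})}$; closedness of $\Delta$ is exactly what makes such a choice possible, and closedness of $\Gamma$ in $\Pi_S$ guarantees the branch limits lie in $\Gamma$. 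One small slip: the expression $\gamma_{\sigma0}^{-1}\gamma_{\sigma1}\notin\Delta$ distinguishes \emph{left} cosets, not right cosets as you wrote---harmless for the index count, but worth tidying.
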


Taking $\Delta = \{ \id \}$ in the above theorem we obtain that a closed permutation group $\Gamma$ either has cardinality $2^{\aleph _0}$, or there is some finite subset of $S$ whose pointwise stabiliser is trivial. In particular, Theorem~\ref{thm:halin} and all of its implications remain true in the more general setting of closed permutation groups.

It is known that a countable group of permutations with infinite motion of a countable set admits a distinguishing $2$-colouring \cite{istw}. The following theorem shows that almost every 2-colouring has this property, its proof is basically a copy of the proof of Lemma~\ref{lem:motion}.

\begin{thm}
\label{thm:countable}
Let $\Gamma$ be a countable group of permutations with infinite motion of a countable set $S$ and let $c$ be a random colouring of $S$. Then $c$ is almost surely $\Gamma$-distinguishing.
\end{thm}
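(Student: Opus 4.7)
The plan is to mimic the Russel--Sundaram argument from Lemma~\ref{lem:motion}, but replace the finite-sum bound with the observation that each individual bad event already has probability zero, and then use countability of $\Gamma$ to take a union bound.

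First I would fix a non-trivial $\varphi \in \Gamma$ and analyse the cycle structure of $\varphi$ acting on $S$. The condition $c\varphi = c$ is equivalent to the requirement that every cycle of $\varphi$ is monochromatic with respect to $c$. Since each vertex receives its colour independently and uniformly, if $C_1, C_2, \ldots$ are the (pairwise disjoint) non-trivial cycles of $\varphi$, with $|C_j| = k_j \geq 2$, then the events ``$C_j$ is monochromatic'' are independent, and
\[
\prob[c\varphi = c] = \prod_j 2^{1-k_j}.
\]
If $\varphi$ has at least one infinite cycle then the corresponding factor is zero, and we are done for this $\varphi$. Otherwise all cycles of $\varphi$ are finite; since $\varphi$ moves infinitely many elements (this is where the assumption of infinite motion enters, applied to $\varphi \neq \id$), there must be infinitely many non-trivial cycles. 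Each factor $2^{1-k_j}$ is at most $\tfrac{1}{2}$, so the infinite product is $0$.

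Thus $\prob[c\varphi = c] = 0$ for every $\varphi \in \Gamma \setminus \{\id\}$. Since $\Gamma$ is countable, the countable union bound gives
\[
\prob[\exists \varphi \in \Gamma \setminus \{\id\} \colon c\varphi = c] \leq \sum_{\varphi \in \Gamma \setminus \{\id\}} \prob[c\varphi = c] = 0,
\]
so almost surely no non-trivial element of $\Gamma$ preserves $c$, i.e.\ $c$ is $\Gamma$-distinguishing.

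I do not expect a real obstacle: the only subtle point is to be careful in distinguishing the two cases (an infinite cycle exists versus only finite cycles) when converting ``infinite motion'' into ``the product of cycle-probabilities vanishes.'' The countability of $\Gamma$ is used exactly once, at the end, and cannot be dispensed with in this approach---that is precisely what makes the general Conjecture~\ref{con:random} more delicate than Theorem~\ref{thm:countable}.
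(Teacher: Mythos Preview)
Your proof is correct and follows essentially the same approach as the paper: show $\prob[c\varphi=c]=0$ for each non-trivial $\varphi$ using infinite motion, then apply the countable union bound. The only cosmetic difference is that the paper bypasses the full cycle decomposition by directly extracting infinitely many disjoint pairs $(s_i,\varphi s_i)$, each monochromatic with probability $\tfrac12$, which avoids the case split between infinite and finite cycles.
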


\begin{proof}
Clearly, for any given permutation $\gamma \in  \Gamma$ it follows from infinite motion that there are infinitely many disjoint pairs $(s_i, \gamma s_i) \in S\times S$. If we would like $\gamma$ to preserve the colouring it is necessary that all of those pairs are monochromatic. However, for each pair this only happens with probability $\frac 12$. So there is almost surely a pair $(s_i, \gamma s_i)$ carrying different colours and hence $\gamma$ is almost surely broken by $c$.

In order to see that $c$ is almost surely distinguishing we use $\sigma$-subadditivity of the probability measure $\prob$:
\[
 \prob [\exists \gamma \colon c\gamma=c] \leq \sum_{\gamma \in \Gamma} \prob [c \gamma =c] = 0
\]
because every summand is 0.
\end{proof}

Just like Lemma~\ref{lem:motion} the proof of the above result is easily seen to be independent of the group structure of $\Gamma$. 

The argument fails when $\Gamma$ is uncountable because summation is no longer possible. However, we know that a closed, subdegree finite group of permutations is always separable. Applying our argument to a dense countable subset yields the following.

\begin{thm}
Let $\Gamma$ be a separable group of permutations of a countable set $S$ with infinite motion and let $c$ be a random colouring of $S$. Then $\Gamma_c$ is almost surely nowhere dense in $\Gamma$.
\end{thm}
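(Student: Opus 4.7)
The plan is to replay the proof of Theorem~\ref{thm:countable} inside a countable dense subset $\Gamma_0 \subseteq \Gamma$ (furnished by separability), and then to use a density argument to upgrade the resulting null-set statement about ``bad'' colourings into a nowhere-density statement for $\Gamma_c$. Fix such a $\Gamma_0$. For every $\gamma \in \Gamma_0 \setminus \{\id\}$, infinite motion produces an infinite family of pairwise disjoint pairs $\{s_i, \gamma s_i\}$, and $c$ preserves $\gamma$ only if every such pair is monochromatic, an event of probability $0$ by independence. Hence $\prob[c\gamma=c]=0$ for each such $\gamma$, and $\sigma$-subadditivity over the countably many $\gamma \in \Gamma_0 \setminus \{\id\}$ shows that
\[
A := \{c \text{ breaks every } \gamma \in \Gamma_0 \setminus \{\id\}\}
\]
has probability $1$. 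On $A$ we have $\Gamma_c \cap \Gamma_0 \subseteq \{\id\}$, and the rest of the argument will be purely topological.

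By Lemma~\ref{lem:stab_c}, $\Gamma_c$ is closed, so proving it nowhere dense reduces to showing it has empty interior. I assume, for contradiction and working on $A$, that $\Gamma_c$ contains a non-empty open set $U$. Since $\Gamma_c$ is a subgroup and left translation in $\Gamma$ is a homeomorphism, I may replace $U$ by $\gamma^{-1}U$ for some $\gamma \in U$ and assume $\id \in U$; the explicit form of balls in the metric $\dist$ of Section~\ref{sec:metric} then yields a finite $F \subseteq S$ with $U \supseteq \Gamma_{(F)}$, so that $\Gamma_c \supseteq \Gamma_{(F)}$.

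The decisive step is to locate a non-trivial element of $\Gamma_0$ inside $\Gamma_{(F)}$. Since $\Gamma_0$ meets every non-empty open subset of $\Gamma$, the intersection $\Gamma_0 \cap \Gamma_{(F)}$ is dense in the open set $\Gamma_{(F)}$. If $\Gamma_{(F)} \neq \{\id\}$, I pick $\gamma_0 \in \Gamma_{(F)} \setminus \{\id\}$ and, using that $\Gamma$ is Hausdorff, separate $\gamma_0$ from $\id$ by an open $V \ni \gamma_0$ with $\id \notin V$; then $V \cap \Gamma_{(F)}$ is non-empty and open, hence meets $\Gamma_0$ in some $\gamma' \neq \id$ lying in $\Gamma_{(F)} \subseteq \Gamma_c$, a direct contradiction to $A$. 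In the remaining case $\Gamma_{(F)} = \{\id\}$, $\Gamma$ is discrete; then $\Gamma_0 = \Gamma$ and $\Gamma_c = \{\id\}$ almost surely by Theorem~\ref{thm:countable}, covering this boundary instance.

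The main obstacle is not any single hard step but the need to bridge the probabilistic content---available only over the countable set $\Gamma_0$---with the topology of the full, possibly uncountable, group $\Gamma$; the density argument via open pointwise stabilisers is precisely what effects this transfer. The only genuine subtlety is the discrete edge case, where ``nowhere dense'' trivialises into the already stronger statement $\Gamma_c = \{\id\}$.
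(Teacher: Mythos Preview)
Your proof follows the same line as the paper's: take a countable dense subset, use the infinite-motion argument of Theorem~\ref{thm:countable} to break every nontrivial element of it almost surely, then invoke Lemma~\ref{lem:stab_c} to conclude that $\Gamma_c$ is closed with dense complement, hence nowhere dense. The paper compresses your contradiction argument via $\Gamma_{(F)}$ into the single sentence ``the complement is almost surely open and dense''; what you have written is exactly that fact unpacked.

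One remark on the discrete edge case you isolate: if $\Gamma$ is discrete and nontrivial, then $\{\id\}$ is open and therefore \emph{not} nowhere dense, so ``$\Gamma_c=\{\id\}$'' does not actually rescue the conclusion there---the statement of the theorem itself fails in that regime. The paper simply does not address this, the discrete (countable) case being subsumed by the stronger Theorem~\ref{thm:countable}; you are right to notice the wrinkle, but your proposed resolution does not close it.
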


\begin{proof}
Choose a dense countable subset of $\Gamma$. By the same arguments as before the random colouring $c$ almost surely breaks every automorphism in this subset. By Lemma~\ref{lem:stab_c} the stabiliser of $c$ is a closed subgroup, hence its complement is almost surely an open dense set. This implies that the stabiliser must be almost surely nowhere dense in $\Gamma$.
\end{proof}

In \cite{istw} it is shown that every closed permutation group has a dense subgroup for which there is a distinguishing 2-colouring. Observing that the subgroup generated by a countable set is again countable we get the same result for every separable permutation group.

So far we have shown that, if $\Gamma$ is closed and subdegree finite, then  the stabiliser subgroup of a random colouring is almost surely topologically sparse, which was more or less a direct consequence of separability. But it turns out that under suitable conditions the set of unbroken permutations is small in at least one more way: it almost surely has Haar measure 0. The basic proof ideas come again from the proof of Theorem~\ref{thm:countable}, the main difference being that we replace the sum by an integral with respect to the Haar measure. In the proof we will need the following version of Fubini's theorem which can, for example, be found in \cite{MR924157}.

\begin{thm}
\label{thm:fubini}
Let $(X, \mathcal X, \mu)$ and $(Y, \mathcal Y, \nu)$ be $\sigma$-finite measure spaces and let $f\colon X \times Y \to \mathbb R$ be a  non-negative, $(\mathcal X \times \mathcal Y)$-measurable function. Then
\[
\int _X \left( \int _Y f(x,y) \,d\nu(y) \right)\,d\mu(x) = \int _Y \left( \int _X f(x,y) \,d\mu(x) \right) \,d\nu(y). \tag*{\qedsymbol}
\] 
\end{thm}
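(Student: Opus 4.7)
The plan is to establish this Fubini–Tonelli statement by the standard bootstrap, moving from indicators of measurable rectangles up to arbitrary non-negative measurable $f$. First I would reduce the theorem to the case $f = \mathbf 1_E$ for $E \in \mathcal X \times \mathcal Y$. Once the identity
\[
(\mu \times \nu)(E) = \int_X \nu(E_x)\,d\mu(x) = \int_Y \mu(E^y)\,d\nu(y),
\]
where $E_x = \{y : (x,y) \in E\}$ and $E^y$ is defined analogously, is known for every such $E$, linearity extends the statement to non-negative simple functions, and a monotone approximation $f_n \uparrow f$ together with two applications of the monotone convergence theorem delivers the full conclusion.

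For the indicator step I would first verify the equalities on a measurable rectangle $A \times B$: the sections are $(A \times B)_x = B$ when $x \in A$ and $\emptyset$ otherwise, so both iterated integrals evaluate to $\mu(A)\nu(B)$, which is the very definition of $\mu \times \nu$. For general $E$ I would fix $\sigma$-finite exhaustions $X = \bigcup_n X_n$ and $Y = \bigcup_n Y_n$ with $\mu(X_n),\nu(Y_n) < \infty$, and let $\mathcal M_n$ be the class of those $E \subseteq X_n \times Y_n$ for which $x \mapsto \nu(E_x)$ and $y \mapsto \mu(E^y)$ are measurable and their integrals agree with $(\mu \times \nu)(E)$. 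A direct check shows that $\mathcal M_n$ contains all measurable rectangles in $X_n \times Y_n$ and is closed under finite disjoint unions and monotone limits --- finiteness of $\mu(X_n)$ and $\nu(Y_n)$ is precisely what lets one invoke dominated convergence on the section integrals when passing to monotone limits. The monotone class theorem (or equivalently a $\pi$-$\lambda$ argument) then identifies $\mathcal M_n$ with the whole trace $\sigma$-algebra on $X_n \times Y_n$. Letting $n \to \infty$ and applying monotone convergence once more on each side wraps up the indicator case.

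The main obstacle is precisely the joint measurability of the section functions $x \mapsto \nu(E_x)$ and $y \mapsto \mu(E^y)$, and this is where the $\sigma$-finiteness hypothesis is essential: without it, a monotone limit of section integrals can fail to behave continuously and the monotone-class step collapses, since $\nu(E_x)$ could jump from finite to $+\infty$ on a non-measurable set. Once the indicator case is in place, the extensions through simple functions and arbitrary non-negative measurable $f$ are routine invocations of linearity and monotone convergence, and Theorem~\ref{thm:fubini} follows.
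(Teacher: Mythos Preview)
Your outline is the standard bootstrap proof of the Tonelli theorem and is essentially correct; the only minor imprecision is that for \emph{increasing} monotone limits the monotone convergence theorem already suffices without any finiteness assumption, and it is only for decreasing limits (equivalently, for closure under complements or proper differences in the $\lambda$-system check) that the finiteness of $\mu(X_n)$ and $\nu(Y_n)$ is genuinely needed, both to invoke continuity of measure from above on the sections and to supply an integrable dominating function.

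There is, however, nothing to compare your argument against: the paper does not prove Theorem~\ref{thm:fubini} at all. It is stated as a classical result, attributed to a reference, and the trailing \qedsymbol\ simply marks the end of the quoted statement. Your sketch is therefore more than what the paper itself provides.
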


Having stated this theorem we are now ready to prove the following.

\begin{thm}
\label{thm:zeromeasuregrp}
Let $\Gamma$ be a closed, subdegree finite group of permutations of a countable set $S$ and assume that the motion of $\Gamma$ is infinite. Then a random colouring $c$ almost surely breaks almost every (with respect to the Haar measure) element of $\Gamma$.
\end{thm}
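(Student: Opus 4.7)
The plan is to deduce the result from Fubini's theorem applied to the set
\[
E = \{(c,\gamma) \in \mathcal{C}(S) \times \Gamma : c\gamma = c\}
\]
equipped with the product of $\prob$ and a left Haar measure $\haar$ on $\Gamma$. Such a $\haar$ exists and is $\sigma$-finite because Lemma~\ref{lem:compactballs} together with the list at the end of Section~\ref{sec:metric} tells us that $\Gamma$ is a locally compact, $\sigma$-compact Hausdorff topological group. Joint measurability of $E$ is routine: writing $E = \bigcap_{s\in S}\{(c,\gamma) : c(s) = c(\gamma s)\}$, each member of the intersection is the preimage of the diagonal under the jointly continuous map $(c,\gamma) \mapsto (c(s), c(\gamma s))$ into the finite discrete space $\{0,1\} \times \{0,1\}$, and $S$ is countable.

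The crucial input is the estimate $\prob[c\gamma = c] = 0$ for every $\gamma \in \Gamma\setminus\{\id\}$. This is a direct repetition of the argument from Theorem~\ref{thm:countable}: infinite motion lets us select an infinite family of pairwise disjoint pairs $(s_i, \gamma s_i)$ with $s_i \neq \gamma s_i$, and the independent events ``$c(s_i) = c(\gamma s_i)$'' each have probability $\tfrac{1}{2}$, so their conjunction has probability $0$.

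Invoking Theorem~\ref{thm:fubini} now yields
\[
\int_{\mathcal{C}(S)} \haar(\Gamma_c)\,d\prob(c) = \int_\Gamma \prob[c\gamma = c]\,d\haar(\gamma) = \haar(\{\id\}),
\]
since the right-hand integrand vanishes off $\{\id\}$. When $\Gamma$ is non-discrete, $\haar(\{\id\}) = 0$ by the standard fact that a Haar measure on a locally compact Hausdorff group vanishes on singletons unless the group is discrete, so $\haar(\Gamma_c) = 0$ for $\prob$-almost every $c$, which is the statement of the theorem. In the discrete (equivalently, countable) case, Theorem~\ref{thm:countable} already gives $\Gamma_c = \{\id\}$ almost surely, which covers this edge case once one agrees to interpret ``almost every element'' modulo the one fixed point $\id$.

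The main point to be careful about is the measurability check and the dichotomy between discrete and non-discrete $\Gamma$; past these routine but unavoidable matters, the statement falls out as the continuous analogue of the summation argument used in Theorem~\ref{thm:countable}, with a $\haar$-integral replacing the sum over $\Gamma$.
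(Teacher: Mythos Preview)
Your proof is correct and follows essentially the same route as the paper: apply Fubini to the indicator of $\{(c,\gamma):c\gamma=c\}$, use infinite motion to get $\prob[c\gamma=c]=0$ for each nontrivial $\gamma$, and conclude that $\mathbb E[\haar(\Gamma_c)]$ vanishes. Your measurability check via the continuous maps $(c,\gamma)\mapsto(c(s),c(\gamma s))$ is a bit slicker than the paper's explicit approximation through stabilisers of finite partial colourings, and you are in fact more careful than the paper in separating out the discrete case, where the integrand is not literally identically zero because of the identity; the paper glosses over this point.
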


\begin{proof}
First of all recall that $\Gamma$ is locally compact by arguments in Section~\ref{sec:metric}. So we can define a Haar measure $\haar$ on $\Gamma$.

We now claim that  for a random colouring $c$ the expected value of $\haar (\Gamma _c)$ is 0. Since $\haar(\Gamma_c)$ is a non-negative random variable this implies that $\haar(\Gamma_c)=0$ almost surely which proves the lemma.

To see that the expected value is indeed 0 we calculate
\begin{align*}
\mathbb E(\haar(\Gamma_c)) 
&= \int_{\mathcal C(S)} \haar(\Gamma_c) \, d\prob (c) \\
&= \int_{\mathcal C(S)} \int_{\Gamma} I_{[c\gamma = c]} \,d\haar (\gamma) \,d\prob(c)
\end{align*}

Since $\Gamma$ is the union of countably many compact balls by Lemma~\ref{lem:fewballs} and Lemma~\ref{lem:compactballs}, it is a $\sigma$-compact space. Compact sets have finite Haar measure, hence the Haar measure on $\Gamma$ is $\sigma$-finite.

In order to be able to apply Theorem~\ref{thm:fubini}, we still need to show that the function which we would like to integrate is measurable. Since it is the indicator function of the set
\[
	U = \{(c,\gamma) \in \mathcal C(S) \times \Gamma \mid c \gamma = c \}
\]
it suffices to show that $U$ is measurable. For this purpose let $(S_i)_{i \in \mathbb N}$ be a sequence of finite subsets of $S$ such that $\lim _{i \to \infty} S_i = S$. For each partial colouring $c'$ with domain $S_i$ define
\[
	U_i (c') =  \{(c,\gamma) \in \mathcal C(S) \times \Gamma_{c'} \mid \forall s \in S_i\colon c(s) = c'(s)\}.
\]
Observe that $C = \{c \in \mathcal C(S) \mid \forall s \in S_i\colon c(s) = c'(s)\}$ is a cylinder set and $\Gamma_{c'}$ is open and closed by Lemma~\ref{lem:stab_pc}. Since $U_i(c') = C \times \Gamma_{c'}$ it is clearly contained in the product $\sigma$-algebra.

Now let
\[
	U_i = \bigcup_{c' \in \mathcal C(S_i)} U_i (c').
\]
This set is measurable because it is the finite union of measurable sets. We claim that
\[
	U = \bigcap _{i \in \mathbb N} U_i.
\]
To see that this is indeed the case consider $(c,\gamma) \in U$. Clearly $c$ coincides with some partial colouring $c'$ on $S_i$ and $\gamma$ preserves this partial colouring because it preserves $c$. Hence $(c,\gamma)$ is contained in every $U_i$ and thus also in the intersection. 

Conversely, let $(c, \gamma) \in \bigcap _{i \in \mathbb N} U_i$. Assume that $\gamma$ does not preserve $c$. Then there is $s \in S$ such that $c(s) \neq c(\gamma s)$. Take $i$ large enough that $s$ and $\gamma s$ are contained in $S_i$. Clearly, $\gamma$ does not preserve the partial colouring $c'$ which coincides with $c$ on $S_i$. Hence $(c,\gamma) \notin U_i$, a contradiction to $(c, \gamma) \in \bigcap _{i \in \mathbb N} U_i$. 

Altogether we have shown that $U$ can be written as a countable intersection of measurable sets. So it is measurable itself and hence the indicator function $I_U =  I_{[c\gamma=c]}$ is measurable as well.

This implies that we can apply Fubini's theorem to the iterated integral above and obtain 
\begin{align*}
\mathbb E(\haar(\Gamma_c)) 
&= \int_{\Gamma} \int_{\mathcal C(S)}  I_{[c \gamma = c]}  \,d\prob(c) \,d\haar(\gamma) \\
&= \int_{\Gamma} \prob[c \gamma = c] \,d\haar(\gamma).
\end{align*}

We already observed earlier that the probability that a given permutation preserves a random colouring is $0$, hence we integrate over the constant $0$-function and thus the integral evaluates to $0$.
\end{proof}

We conclude this section with a result which will be useful in the next section. In order to state this result, we need the following equivalence relation which is closely linked to the distinct spheres condition which was introduced in \cite{smtuwa} as a sufficient condition for $2$-distinguishability. We will see later (Theorem~\ref{thm:distinct_spheres}), that for locally finite graphs satisfying this condition a random $2$-colouring is almost surely distinguishing. 

Let $S$ be a countable set and let $\Gamma$ be a subdegree finite group of permutations of $S$ with infinite motion. Define an equivalence relation $\sim_\Gamma$ on the set $S$ as follows: two points $s,t \in S$ are called $\Gamma$-equivalent, if the following holds:
\begin{itemize}
\item there is a permutation $\varphi \in \Gamma$ such that $\varphi s = t$ and 
\item for all but finitely many $x \in S$ the orbits $\Gamma _s x$ and $ \varphi \Gamma_s x $ coincide.
\end{itemize}
Notice that the latter requirement is true for $\varphi$ if and only if it is true for every $\gamma$ such that $\gamma s = t$ because in this case $\gamma = \varphi \gamma_s$ for a suitable $\gamma_s \in \Gamma_s$. Hence, if the second condition does not depend on the choice of $\varphi$.

\begin{prp} 
The relation $\sim_\Gamma$ is indeed an equivalence relation.
\end{prp}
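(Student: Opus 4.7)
The plan is to verify each of the three axioms of an equivalence relation in turn, relying on two recurring ingredients: the conjugation identity $\Gamma_{\gamma s} = \gamma \Gamma_s \gamma^{-1}$ valid for every $\gamma \in \Gamma$, and the observation already made in the excerpt that the second clause in the definition of $\sim_\Gamma$ depends only on the coset $\varphi \Gamma_s$, so any convenient coset representative can serve as the witness. Reflexivity will be immediate by choosing $\varphi = \id$, which satisfies both clauses trivially.

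For symmetry, if $s \sim_\Gamma t$ is witnessed by $\varphi$, I would propose $\varphi^{-1}$ as the witness for $t \sim_\Gamma s$. Clause~(1) is clear, and for clause~(2) I would rewrite $\Gamma_t = \varphi \Gamma_s \varphi^{-1}$ and perform the substitution $y = \varphi^{-1} x$; the required identity $\Gamma_t x = \varphi^{-1} \Gamma_t x$ then rearranges to the hypothesis $\Gamma_s y = \varphi \Gamma_s y$, and since $\varphi^{-1}$ is a bijection of $S$, the exceptional set of bad $x$ is the image of the exceptional set of bad $y$ under $\varphi$, hence still finite. For transitivity, if $s \sim_\Gamma t$ via $\varphi$ and $t \sim_\Gamma u$ via $\psi$, I would propose $\psi \varphi$ as witness for $s \sim_\Gamma u$. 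Again clause~(1) is immediate. For clause~(2), let $E_1, E_2 \subseteq S$ be the finite exceptional sets for the two hypotheses. Using $\Gamma_t = \varphi \Gamma_s \varphi^{-1}$ and the substitution $y = \varphi x$, the second hypothesis $\Gamma_t y = \psi \Gamma_t y$ transforms into $\varphi \Gamma_s x = \psi \varphi \Gamma_s x$ for every $x \notin \varphi^{-1} E_2$, and chaining with the first hypothesis yields $\Gamma_s x = \varphi \Gamma_s x = \psi \varphi \Gamma_s x$ off the finite set $E_1 \cup \varphi^{-1} E_2$, which is what clause~(2) for $\psi\varphi$ requires.

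The step requiring the most care will be transitivity, since the two hypotheses are conditions on orbits based at different points and must be transported via $\varphi$ into a single condition on orbits based at $s$; here the fact that $\varphi^{-1}$ sends finite sets to finite sets is essential for preserving the ``all but finitely many'' clause, and one must be careful not to confuse the variable $x$ that ranges over $S$ at $s$ with the variable $y = \varphi x$ that ranges over $S$ at $t$. Beyond this bookkeeping, no step poses a genuine difficulty, so I expect the proof to be short and purely formal.
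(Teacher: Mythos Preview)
Your proposal is correct and follows essentially the same route as the paper: both verify reflexivity via $\varphi = \id$, symmetry via $\varphi^{-1}$ together with the conjugation identity $\Gamma_t = \varphi \Gamma_s \varphi^{-1}$ and a change of variable, and transitivity via the witness $\psi\varphi$ with the chain $\Gamma_s x = \varphi \Gamma_s x = \psi \varphi \Gamma_s x$. The only cosmetic difference is that you name the exceptional sets $E_1, E_2$ explicitly, while the paper leaves them implicit.
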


\begin{proof}
To show reflexivity simply choose $\varphi = \id$.

For symmetry assume that $s \sim_\Gamma t$ and let $\varphi \in \Gamma$ such that $\varphi s = t$. Notice that $\Gamma_s = \varphi^{-1} \Gamma_t \varphi$, so for $\Gamma_s x = \varphi \Gamma_s x$ we have
\[
	\Gamma_t \varphi x = \varphi \Gamma_s x = \Gamma_s x = \varphi^{-1} \Gamma_t \varphi x.
\]
This implies that $\Gamma_t y = \varphi^{-1} \Gamma_t y$ for all but finitely many values of $y = \varphi x$, that is $t \sim _\Gamma s$.

Finally, we need to show transitivity. Assume that $s \sim_\Gamma t$ and that $t \sim_\Gamma u$ and let $\varphi$ and $\psi$ be the corresponding permutations. By definition this implies that for all but finitely many $x \in S$ it holds that $\varphi \Gamma_s x = \Gamma_s x$ and $\psi \Gamma _t \varphi x = \Gamma _t \varphi x$. Using the fact that $\varphi \Gamma _s = \Gamma_t \varphi$ we obtain
\[
	\psi \varphi \Gamma _s x =\psi \Gamma _t \varphi x = \Gamma _t \varphi x = \varphi \Gamma _s x = \Gamma _s x
\]
for all but finitely many $x \in S$.
\end{proof}

We denote the equivalence class of $s \in S$ with respect to $\sim_\Gamma$ by $[s]_\Gamma$. With the above notation we have the following lemma, which constitutes a generalisation of a result in \cite{smtuwa} as we will see in section~\ref{sec:graphcolour}.

\begin{lem}
\label{lem:fixclasses}
Let $S$, $\Gamma$, and $\sim_\Gamma$ be defined as above. Assume that $\Gamma$ has infinite motion and let $c$ be a random colouring of $S$. Then $c$ almost surely fixes every equivalence class with respect to $\sim_\Gamma$, that is
\[
\Gamma_c \subseteq \bigcap_{s \in S} \Gamma_{[s]_\Gamma}
\]
 where $\Gamma_{[s]_\Gamma}$ denotes the setwise stabiliser of $[s]_\Gamma$.
\end{lem}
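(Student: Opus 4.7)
The plan is to reduce the conclusion to countably many pointwise statements and apply $\sigma$-subadditivity. The inclusion $\Gamma_c \subseteq \bigcap_{s \in S} \Gamma_{[s]_\Gamma}$ is equivalent to saying that for no pair $(s,t) \in S \times S$ with $t \notin [s]_\Gamma$ does some $\gamma \in \Gamma_c$ satisfy $\gamma s = t$; since $S \times S$ is countable it suffices to show $\prob[\exists \gamma \in \Gamma_c \colon \gamma s = t] = 0$ for each such pair. Fix $s,t$ with $t \notin [s]_\Gamma$. If no $\gamma \in \Gamma$ has $\gamma s = t$ there is nothing to prove, so fix $\gamma_0 \in \Gamma$ with $\gamma_0 s = t$; then $\{\gamma : \gamma s = t\} = \gamma_0 \Gamma_s$. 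Since $s \not\sim_\Gamma t$ but a witness $\gamma_0$ exists, the orbit condition in the definition of $\sim_\Gamma$ must fail, so infinitely many $x \in S$ satisfy $\Gamma_s x \neq \gamma_0 \Gamma_s x$.

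Next I would use subdegree finiteness to construct an infinite sequence $x_1, x_2, \ldots$ of such ``bad'' $x$'s with the additional property that the finite sets $T_i := \Gamma_s x_i \cup \gamma_0 \Gamma_s x_i$ are pairwise disjoint. This is possible inductively: having chosen $x_1, \ldots, x_{i-1}$ and setting $U = \bigcup_{j<i} T_j$, the forbidden set $\{x : T_i \cap U \neq \emptyset\}$ is contained in the $\Gamma_s$-saturation of $U \cup \gamma_0^{-1} U$, a finite union of finite orbits, so infinitely many admissible choices remain. For each $i$ let $E_i$ be the event that there is some $\gamma_s \in \Gamma_s$ with $c(\gamma_0 \gamma_s y) = c(y)$ for every $y \in \Gamma_s x_i$. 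Each $E_i$ depends only on $c|_{T_i}$, so by disjointness the $E_i$ are mutually independent, and any $\gamma = \gamma_0 \gamma_s \in \Gamma_c$ with $\gamma s = t$ would witness every $E_i$ at once; hence
\[
\prob[\exists \gamma \in \Gamma_c \colon \gamma s = t] \leq \prob\bigl[\textstyle\bigcap_i E_i\bigr] = \prod_i \prob[E_i].
\]

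The crux is the uniform bound $\prob[E_i] \leq 1/2$, which I would obtain by extracting a Hamming-weight invariant. Summing $c(y) = c(\gamma_0 \gamma_s y)$ over $y \in \Gamma_s x_i$ and using that $y \mapsto \gamma_0 \gamma_s y$ is a bijection $\Gamma_s x_i \to \gamma_0 \Gamma_s x_i$ gives $\sum_{y \in \Gamma_s x_i} c(y) = \sum_{z \in \gamma_0 \Gamma_s x_i} c(z)$. Cancelling the common contribution from $\Gamma_s x_i \cap \gamma_0 \Gamma_s x_i$ reduces this to $\sum_{y \in Y_i} c(y) = \sum_{z \in Z_i} c(z)$, where $Y_i := \Gamma_s x_i \setminus \gamma_0 \Gamma_s x_i$ and $Z_i := \gamma_0 \Gamma_s x_i \setminus \Gamma_s x_i$. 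Since $|Y_i| = |Z_i| \geq 1$ and $Y_i, Z_i$ are disjoint, these two sums are independent Binomial$(|Y_i|, 1/2)$ random variables, so the probability that they agree equals $\binom{2|Y_i|}{|Y_i|}/2^{2|Y_i|}$, a quantity bounded above by $1/2$, with equality precisely at $|Y_i| = 1$. Therefore $\prod_i \prob[E_i] \leq \prod_i (1/2) = 0$, as required.

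The delicate point is precisely this uniform bound. A direct union bound over $\gamma_s \in \Gamma_s$ would give a factor equal to the number of distinct permutations that $\Gamma_s$ induces on the orbit $\Gamma_s x_i$, divided by $2^{|\Gamma_s x_i|}$, and this ratio can exceed $1$ whenever the stabiliser acts with a rich permutation group on the orbit (already for regular trees of degree at least four). The Hamming-weight observation circumvents any detailed analysis of this induced action by isolating one necessary consequence whose probability is controlled solely by $|Y_i|$.
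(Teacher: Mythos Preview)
Your argument is correct and follows essentially the same route as the paper: reduce by $\sigma$-subadditivity to a fixed pair, extract infinitely many disjoint orbit--difference sets using subdegree finiteness, and bound each event by $\tfrac12$ via the Hamming-weight (binomial coincidence) observation. The one notable difference is that the paper indexes over triples $(s,t,u)$ with $u=\varphi t$ and compares $P_i=\Gamma_s v_i\setminus\varphi\Gamma_s v_i$ against $Q_i=\varphi\Gamma_s v_i\setminus\varphi^2\Gamma_s v_i$, whereas you stay with pairs $(s,t)$ and compare $Y_i=\Gamma_s x_i\setminus\gamma_0\Gamma_s x_i$ against $Z_i=\gamma_0\Gamma_s x_i\setminus\Gamma_s x_i$; your choice makes the disjointness bookkeeping slightly cleaner since both $Y_i$ and $Z_i$ sit inside the single set $T_i$, and it avoids having to argue that $\varphi^2\Gamma_s v_i$ depends only on $u$.
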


\begin{proof}
For $t \nsim _\Gamma s$ and $u \in S$ consider the event 
\[
A_{stu} = [\exists \varphi \in \Gamma_c \colon \varphi s =t, \varphi t = u].
\]
If we can show that the probability of $A_{stu}$ is $0$ we are done, because
\[
	\prob[\exists s \nsim _\Gamma t, \varphi \in \Gamma \colon \varphi s = t] 
	= \prob \left( \bigcup _{s \in S} \bigcup _{\substack{t \in S \\ s \nsim_{\Gamma} t}} \bigcup_{u \in S} A_{stu}\right) 
	\leq \sum_{s \in S} \sum_{\substack{t \in S\\t\nsim_\Gamma s}} \sum_{u \in S} \prob (A_{stu}) =0.
\]

So let us take a closer look at $\prob (A_{stu})$. If there is no permutation in $\Gamma$ which maps $s$ to $t$ and $t$ to $u$, then this probability clearly is $0$. So assume that there is such a permutation $\varphi$. Let $v \in V$. Since $s$ is mapped to $t$ the set $\Gamma _s v$ must be mapped to the set $\varphi \Gamma _s v$. Notice that the set $\varphi \Gamma _s v$ does not depend on the particular choice of $\varphi$, that is, it is the same for every $\varphi \in \Gamma$ with $\varphi s = t$. In particular this implies that if the set $ \Gamma _s v \setminus \varphi \Gamma _s v$ is nonempty, then it will be mapped to the disjoint set $\varphi \Gamma _s v \setminus \varphi ^2 \Gamma _s v$ by every automorphism which maps $s$ to $t$. The set $\varphi ^2 \Gamma _s v$ depends only on $u$, that is, the image of $s$ under $\varphi^2$ and not on the particular choice of $\varphi$.

There are infinitely many points $v$ for which these difference sets are nonempty because $s \nsim_\Gamma t$ and each of the sets is finite because of subdegree finiteness. Hence we can choose infinite sequences of non empty, disjoint sets $P_i:=\Gamma _s v_i \setminus \varphi \Gamma _s v_i$ and $Q_i = \varphi \Gamma _s v_i \setminus \varphi^2 \Gamma _s v_i$ such that all of the $P_i$ and $Q_j$ are also pairwise disjoint for all $i,j \in \mathbb N$. 

Now assume that there is a colour preserving permutation which maps $s$ to $t$. This can only happen if the sets $P_i$ and $Q_i$ contain the same number of vertices of each colour for every $i$. Let $n_i := \vert P_i \vert = \vert Q_i \vert$ and denote by $p_i$ and $q_i$ the number of elements of $P_i$ and $Q_i$ with colour $0$ respectively. Then the probability that the colour distributions on $P_i$ and $Q_i$ coincide can be expressed as
\begin{align*}
\prob[p_i = q_i] 
&=  \sum_{j=1}^{n_i} \prob [p_i = j\mid q_i =j]\, \prob [q_i = j]\\ 
&=  \sum_{j=1}^{n_i} \prob [p_i = j]\, \prob [q_i = j]\\ 
&= \sum_{j=1}^{n_i}\, \binom{n_i}{j} 2^{-n_i} \binom{n_i}{j} 2^{-n_i} 
\end{align*}
where the second equality follows from the fact that $P_i$ and $Q_i$ are disjoint and hence their colourings are independent. To get an estimate for the last sum observe that 
\[
	\binom{n_i}{j} 2^{-n_i} \leq \frac 1 2,
\]
and hence 
\[
\prob [p_i = q_i] \leq \frac 1 2 \sum_{j=1}^{n_i}\, \binom{n_i}{j} 2^{-n_i} = \frac 1 2.
\]

Recall that in order to have an automorphism which maps $s$ to $t$ we need $p_i = q_i$ for every $i \in \mathbb N$. These events will be independent because all of the sets are disjoint. Hence we have
\[
	\prob[\exists \varphi \in \Gamma_c \mid \varphi s = t] \leq \prod_{i \in \mathbb N} \prob[p_i = q_i] = 0. \qedhere
\]
\end{proof}

\begin{rmk}
Notice that Lemma~\ref{lem:fixclasses} can be iterated as follows. Let $\Gamma ^ 0 = \Gamma$ and denote by $\sim_0$ the relation $\sim_{\Gamma ^ 0}$. Inductively, for $i \geq 0$ define 
\[
	\Gamma ^ {i+1}= \bigcap_{v \in V} \Gamma ^ i _{[v]_i},
\]
where $[v]_i$ is the equivalence class of $v$ with respect to $\sim_i$ and $\Gamma ^ i _{[v]_i}$ is its setwise stabiliser. Define $\sim_{i+1} = \sim_{\Gamma^{i+1}}$.

Let $c$ be a random colouring of $S$. Inductively applying Lemma~\ref{lem:fixclasses} we obtain that almost surely $\Gamma_c \subseteq \Gamma^i$ for each $i \in \mathbb N_0$. This implies that almost surely
\[
	\Gamma_c \subseteq \Gamma ^\infty = \lim_{i \to \infty} \Gamma^i = \bigcap _{i \in \mathbb N_0} \Gamma^i.
\]
\end{rmk}

\begin{rmk}
The set of permutations in $\Gamma$, that fix all equivalence classes with respect to $\sim_\Gamma$ setwise is a group. Denote it by $\Delta$. If there is a finite equivalence class then  Lemma~\ref{lem:whencompact} implies that $\Delta$ is compact and hence the stabiliser of a random colouring is almost surely compact.

But even if it is not compact $\Delta$ is the limit of a sequence of compact subgroups. Simply notice that for a fixed $s \in S$ every permutation $\varphi \in \Delta$ must fix all but finitely many suborbits $\Delta_s x$ setwise. Let $\Delta_s x_i$ be an enumeration of all suborbits and define
\[
	\Delta_i = \{ \varphi \in \Delta \mid \forall j > i \colon \varphi \Delta _s x_j = \Delta _s x_j \}.
\]
Then $\Delta_i$ is compact by Lemma~\ref{lem:whencompact} because the $\Delta_i$-orbit of $x_j$ is contained in the finite suborbit $\Delta _s x_j$ for $j > i$. Clearly the sequence $\Delta_i$ is nondecreasing and every $\varphi \in \Delta$ is contained in some $\Delta_i$. Thus
\[
	\Delta = \lim_{i \to \infty} \Delta_i = \bigcup _{i \in \mathbb N_0} \Delta_i.
\]
\end{rmk}

The above remark also tells us, that in order to prove Conjecture~\ref{con:random} it suffices to consider compact groups. More precisely we have the following.

\begin{cor}
Assume that for every compact, subdegree finite permutation group with infinite motion a random colouring is almost surely distinguishing. Then the same is true for every subdegree finite permutation group with infinite motion.
\end{cor}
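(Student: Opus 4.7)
The plan is to use Lemma~\ref{lem:fixclasses} together with the second of the two preceding remarks to reduce the general case to the compact one. Let $\Gamma$ be a subdegree finite permutation group on a countable set $S$ with infinite motion and let $c$ be a random $2$-colouring of $S$. By Lemma~\ref{lem:fixclasses} we have, with probability one,
\[
\Gamma_c \subseteq \Delta := \bigcap_{s \in S} \Gamma_{[s]_\Gamma},
\]
and the second of the preceding remarks expresses $\Delta$ as $\Delta = \bigcup_{i \in \mathbb N_0} \Delta_i$ for some nested sequence $\Delta_0 \subseteq \Delta_1 \subseteq \dots$ of compact subgroups of $\Gamma$.

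Next I would verify that each $\Delta_i$ satisfies the hypotheses of the corollary. Compactness is built into the construction. Since $(\Delta_i)_s \subseteq \Gamma_s$ for every $s \in S$, every $(\Delta_i)_s$-orbit is contained in a finite $\Gamma_s$-orbit, so $\Delta_i$ is subdegree finite. Every non-trivial element of $\Delta_i$ is a non-trivial element of $\Gamma$ and hence moves infinitely many points, so $\Delta_i$ inherits infinite motion from $\Gamma$.

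Applying the assumed statement separately to each $\Delta_i$, the event $\{(\Delta_i)_c = \{\id\}\}$ has probability one for every $i \in \mathbb N_0$. Intersecting these countably many events with the probability-one event $\{\Gamma_c \subseteq \Delta\}$ still yields a probability-one event, on which
\[
\Gamma_c = \Gamma_c \cap \Delta = \bigcup_{i \in \mathbb N_0} (\Gamma_c \cap \Delta_i) = \bigcup_{i \in \mathbb N_0} (\Delta_i)_c = \{\id\},
\]
so $c$ is almost surely $\Gamma$-distinguishing. I do not expect any substantive obstacle here: the only steps that go beyond the cited lemma and remark are that subdegree finiteness and infinite motion both pass to subgroups, and both are immediate from the definitions.
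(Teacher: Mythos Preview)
Your argument is correct and follows essentially the same route as the paper: reduce to $\Delta$ via Lemma~\ref{lem:fixclasses}, decompose $\Delta$ into the nested compact subgroups $\Delta_i$ from the preceding remark, apply the hypothesis to each $\Delta_i$, and conclude by countable additivity. You are simply more explicit than the paper in invoking Lemma~\ref{lem:fixclasses} and in verifying that subdegree finiteness and infinite motion pass to the $\Delta_i$; the paper's proof leaves these implicit and phrases the final step via $\sigma$-subadditivity of the complements rather than intersection of probability-one events, which amounts to the same thing.
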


\begin{proof}
With the above notation every nontrivial permutation will be contained in some $\Delta_i$. Since $\Delta_i$ is compact the stabiliser in $\Delta_i$ of a random colouring $c$ will almost surely be trivial. By $\sigma$-subadditivity of the probability measure we get
\[
\prob [\Gamma_c \text{ is not trivial}] \leq \sum_{i=1}^\infty \prob[(\Delta_i)_c \text{ is not trivial}] = 0. \qedhere
\]
\end{proof}

\section{Random graph colourings}
\label{sec:graphcolour}

The last section of this paper is devoted to random colourings of graphs. First of all notice, that the automorphism group of a locally finite graph is always a closed, subdegree finite group of permutations on the vertex set. Hence all results from the previous section apply to automorphism groups of locally finite graphs as well.

\begin{thm}
Let $G$ be a locally finite graph with infinite motion and let $c$ be a random colouring of $G$. Then $(\Aut G)_c$ is almost surely a nowhere dense, closed subgroup with Haar measure $0$. {\hfill \qedsymbol}
\end{thm}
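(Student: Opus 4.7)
The plan is simply to observe that the theorem is an amalgamation of three previously established facts, once we verify that $\Aut G$ fits into the framework of the earlier results. Specifically, for a locally finite graph $G$, the automorphism group $\Aut G$ acts faithfully on $V(G)$, is closed in $\Pi_{V(G)}$ in the permutation topology (a standard fact: a pointwise limit of graph automorphisms is a graph automorphism), and is subdegree finite. Subdegree finiteness follows because any automorphism fixing a vertex $v_0$ preserves distance to $v_0$, so every orbit of $(\Aut G)_{v_0}$ is contained in some sphere around $v_0$, which is finite by local finiteness. Consequently the enumerated topological properties from Section~\ref{sec:metric} all hold; in particular $\Aut G$ is separable, $\sigma$-compact and locally compact, so a Haar measure is defined on it.

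With these structural facts in hand, the three claimed properties of $(\Aut G)_c$ follow immediately from the results proved in Sections~\ref{sec:stabiliser} and \ref{sec:randomcolour}. First, $(\Aut G)_c$ is a closed subgroup of $\Aut G$ by Lemma~\ref{lem:stab_c}, which requires no hypothesis beyond $\Gamma$ being a permutation group of a countable set. Second, since $\Aut G$ is separable and has infinite motion (the hypothesis of the theorem), the nowhere density theorem proved just after Theorem~\ref{thm:countable} applies and gives that $(\Aut G)_c$ is almost surely nowhere dense. Third, since $\Aut G$ is closed and subdegree finite with infinite motion, Theorem~\ref{thm:zeromeasuregrp} yields that almost surely almost every element of $\Aut G$ is broken by $c$, i.e.\ $\haar((\Aut G)_c) = 0$.

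Finally, one combines the three almost-sure statements by the standard observation that a countable intersection of almost-sure events is almost sure: the event that $(\Aut G)_c$ is simultaneously a closed subgroup (in fact deterministic), nowhere dense, and of Haar measure zero has probability one. There is no real obstacle here, as each constituent result has already been proved in the required generality; the only thing to check is that $\Aut G$ of a locally finite graph really is a closed, subdegree finite, separable permutation group, which is immediate from local finiteness. Hence the theorem follows with essentially no further work.
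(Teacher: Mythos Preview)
Your proposal is correct and matches the paper's approach exactly: the theorem is stated in the paper with a \qedsymbol\ and no separate proof, relying on the observation (made just before the statement) that $\Aut G$ of a locally finite graph is a closed, subdegree finite permutation group, so Lemma~\ref{lem:stab_c}, the nowhere-density theorem, and Theorem~\ref{thm:zeromeasuregrp} apply directly. Your write-up simply spells out these invocations, which is precisely what the paper leaves implicit.
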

   
If, instead of colouring all vertices randomly, we first colour part of the vertices deterministically we can even make the stabiliser subgroup of the resulting colouring compact as the following theorem shows.
   
\begin{thm}
\label{thm:compactgrp}
Let $G$ be a locally finite graph with infinite motion. Then there is a colouring of $G$ which is only stabilised by a nowhere dense, compact subgroup with measure $0$ of $\Aut G$.
\end{thm}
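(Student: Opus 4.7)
The plan is to combine the random construction behind the previous theorem with a deterministic singular marking that forces the stabiliser into a compact subgroup. Fix a vertex $v_0 \in V(G)$ and write $\Gamma = \Aut G$. Using three colours $\{0,1,\star\}$, I define a colouring $c$ by setting $c(v_0) = \star$ and choosing $c(v) \in \{0,1\}$ independently and uniformly at random for every $v \neq v_0$.

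Because $v_0$ is the unique vertex with colour $\star$, every $\varphi \in \Gamma_c$ must satisfy $\varphi(v_0) = v_0$, so $\Gamma_c \subseteq \Gamma_{v_0}$. By Lemma~\ref{lem:stab_v} the subgroup $\Gamma_{v_0}$ is compact, and by Lemma~\ref{lem:stab_c} the stabiliser $\Gamma_c$ is closed in $\Gamma$; a closed subset of a compact set is compact, so $\Gamma_c$ is compact. This takes care of compactness deterministically, before any randomness is used.

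To obtain the sparsity properties I would apply the preceding results to the group $\Gamma_{v_0}$ itself. Note that $\Gamma_{v_0}$ is a closed, subdegree finite group of permutations of $V$ whose motion is at least that of $\Gamma$, hence infinite. The restriction of $c$ to $V \setminus \{v_0\}$ is a genuine random $2$-colouring, and an element of $\Gamma_{v_0}$ preserves $c$ if and only if it preserves this restriction. Thus applying Theorem~\ref{thm:zeromeasuregrp} together with the preceding nowhere-dense statement to $\Gamma_{v_0}$ shows that almost surely $\Gamma_c$ is nowhere dense in $\Gamma_{v_0}$ and is a null set with respect to the Haar measure of $\Gamma_{v_0}$.

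The final step is to transfer these sparsity properties from $\Gamma_{v_0}$ to $\Gamma$. The subgroup $\Gamma_{v_0}$ is the pointwise stabiliser of a one-point set, so it is a basic open neighbourhood of the identity in the permutation topology, hence clopen in $\Gamma$. For a subset of a clopen subgroup, being nowhere dense and being Haar null are unchanged when one passes between the subgroup and the ambient group: closures and interiors of such a subset are the same whether computed in $\Gamma_{v_0}$ or in $\Gamma$, and the restriction of the Haar measure of $\Gamma$ to the open subgroup $\Gamma_{v_0}$ is, up to a positive scalar, a Haar measure on $\Gamma_{v_0}$. Hence with probability one $c$ has a stabiliser that is compact, nowhere dense and Haar null in $\Gamma$, which in particular yields the existence of such a colouring. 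I expect this final topological/measure-theoretic transfer to be the only mildly delicate point; otherwise the argument reduces smoothly to the results already established for $\Gamma_{v_0}$.
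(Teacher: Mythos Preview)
Your argument is correct, and the transfer step at the end is fine: since $\Gamma_{v_0}$ is a clopen subgroup, nowhere-density and Haar-nullity pass between $\Gamma_{v_0}$ and $\Gamma$ exactly as you describe. The only point worth making explicit is that Theorem~\ref{thm:zeromeasuregrp} is stated for a random $2$-colouring of all of $S$, whereas you randomise only on $V\setminus\{v_0\}$; but as every element of $\Gamma_{v_0}$ fixes $v_0$, the colour at $v_0$ is irrelevant for membership in the stabiliser, so this causes no difficulty.

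Your route does, however, differ from the paper's in one essential respect: you obtain compactness cheaply by introducing a third colour $\star$ to single out $v_0$, while the paper stays within two colours. To do so the paper cannot simply mark $v_0$; instead it appeals to Lemma~\ref{lem:fixroot}, a nontrivial deterministic result from~\cite{cuimle}, to construct a partial $2$-colouring $c'$ whose stabiliser is already contained in $\Gamma_{v_0}$, and then colours the remaining vertices randomly. Your approach is much simpler and self-contained, but it yields a $3$-colouring with the required stabiliser, whereas the paper's argument produces a $2$-colouring, in keeping with the theme of the surrounding results. Read literally (``a colouring''), your proof settles the theorem; read in the $2$-colouring context of the paper, one still needs something like Lemma~\ref{lem:fixroot} to dispense with the extra colour.
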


In order to prove this theorem we first need the following auxiliary result from \cite{cuimle}:

\begin{lem}
\label{lem:fixroot}
Let $G =(V,E)$ be an infinite, locally finite, connected graph with infinite motion, $v_0 \in V$. For every $\delta >0$ there is a partial colouring $c'$ of the vertices of $G$ with the following properties:
\begin{enumerate}
\item $c'$ is $A$-distinguishing for $A= \{\varphi \in \Aut G  \mid \varphi v_0 \neq v_0 \}$.
\item There is $k_0$ such that less than $\delta k$ of the spheres $\sph{m+1}, \ldots , \sph {m+k}$ are coloured for every $k > k_0$ and every $m \in \mathbb N$.{\hfill \qedsymbol}
\end{enumerate}
\end{lem}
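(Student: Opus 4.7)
My plan is to construct $c'$ inductively, colouring the spheres around $v_0$ at a sparse increasing sequence of radii $r_1 < r_2 < \ldots$ chosen so that property (2) holds automatically. Concretely, taking $r_n = n \lceil 2/\delta \rceil$ guarantees that in any block $\{m+1, \ldots, m+k\}$ of $k$ consecutive sphere indices with $k \ge k_0 := \lceil 2/\delta \rceil$, fewer than $\delta k$ belong to $\{r_n\}$. The domain of $c'$ will be $\bigcup_n \sph{r_n}$, so all of the work lies in choosing the actual colour values.

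The inductive invariant at stage $n$ is: every $\varphi \in A$ preserving the part of $c'$ defined on $\bigcup_{i \le n} \sph{r_i}$ satisfies $d(v_0, \varphi v_0) > r_n$. Stage zero is vacuous. For the inductive step, let $V_n = \{v \in V : 0 < d(v_0, v) \le r_{n+1}\}$, which is finite by local finiteness. Any surviving $\varphi$ to be killed at stage $n+1$ has $\varphi v_0 \in V_n$, and from the viewpoint of the colouring is determined by its action on the finite set $C_n \cup \sph{r_{n+1}}$, where $C_n = \bigcup_{i \le n} \sph{r_i}$. Hence, up to this equivalence, only finitely many ``types'' of surviving automorphism need to be killed. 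For each type with $\varphi v_0 = v$, I would use infinite motion together with the spacing of the $r_n$ to locate some $s \in \sph{r_{n+1}}$ such that $\varphi s$ lies in the already coloured region $C_n \cup \sph{r_{n+1}}$. This pair yields a forbidden equation $c'(s) = c'(\varphi s)$ which we will violate by choice of $c'(s)$. With only finitely many forbidden equations and $|\sph{r_{n+1}}|$ potentially large, a greedy or probabilistic argument in the spirit of Lemma~\ref{lem:motion} produces a colouring of $\sph{r_{n+1}}$ that simultaneously breaks every surviving type.

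The main obstacle is ensuring that, at every stage, such usable pairs $(s, \varphi s)$ with both endpoints already coloured are actually available. A surviving type whose $\varphi$ sends $\sph{r_{n+1}}$ entirely into uncoloured territory cannot be killed at that stage. Since $\varphi$ has image $\varphi(\sph{r_{n+1}}) = S_v(r_{n+1})$ and $d(v_0, \varphi s)$ ranges in $[r_{n+1} - d(v_0, v), r_{n+1} + d(v_0, v)]$, the spacing $r_{n+1} - r_n$ must be small enough relative to the already forced lower bound $d(v_0, v) > r_n$ that the interval necessarily meets $\{r_1, \ldots, r_{n+1}\}$. The choice $r_n = n \lceil 2/\delta \rceil$ achieves $r_{n+1} - r_n \le r_n$, so the interval always contains $r_n$; coupling this with subdegree finiteness (controlling how widely $\varphi$ can disperse a sphere) and infinite motion (guaranteeing infinitely many non-fixed vertices) lets one arrange the required intersection. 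Taking $c' = \lim_n c'_n$ then delivers property (1) from the invariant and property (2) from the construction of the radii.
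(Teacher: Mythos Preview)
The paper does not prove this lemma itself---it is imported from \cite{cuimle}---so there is no in-paper argument to compare against. Your proposal, however, has a real gap precisely at the point you flag as ``the main obstacle''. You argue that the interval $[r_{n+1}-d,\,r_{n+1}+d]$ of \emph{possible} values of $d(v_0,\varphi s)$ contains some $r_i$, but that is only a necessary condition for $\varphi(\sph{r_{n+1}})\cap\sph{r_i}\neq\emptyset$, not a sufficient one, and neither subdegree finiteness nor infinite motion closes it. Take $G=\mathbb Z$, $v_0=0$, and the shift $\varphi\colon x\mapsto x+1$. Then $\sph{r}=\{-r,r\}$ and $\varphi(\sph{r})$ consists of vertices at distances $r-1$ and $r+1$ from $0$. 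With gap $g=\lceil 2/\delta\rceil$ between consecutive $r_n$ (so $g\geq 2$ once $\delta\leq 1$), no $\varphi(\sph{r_i})$ ever meets any $\sph{r_j}$, and $\varphi$ therefore preserves \emph{every} partial colouring supported on $\bigcup_n\sph{r_n}$. The same happens for any shift by a non-multiple of $g$, so already at stage~$1$ your invariant cannot be established.

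The construction in \cite{cuimle} avoids this by colouring \emph{blocks} of consecutive spheres $\sph{a_k},\ldots,\sph{a_k+\ell_k}$ rather than isolated ones. If $d=d(v_0,\varphi v_0)\leq\ell_k/2$ then for every $s\in\sph{a_k+d}$ the triangle inequality gives $d(v_0,\varphi s)=d(\varphi^{-1}v_0,s)\in[a_k,a_k+2d]\subseteq[a_k,a_k+\ell_k]$, so usable pairs $(s,\varphi s)$ inside the block are guaranteed. Letting $\ell_k\to\infty$ while the gaps between blocks grow much faster preserves condition~(2). Your inductive bookkeeping and finiteness-of-types reduction are sound; it is the shape of the domain that must change from isolated spheres to growing blocks.
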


\begin{proof}[Proof of Theorem~\ref{thm:compactgrp}]
First apply Lemma~\ref{lem:fixroot} in order to break all automorphisms which move a given vertex $v_0$. This gives a partial colouring $c'$ of the graph which is by Lemma~\ref{lem:stab_pc} only preserved by a closed subset of $\Aut G$. $\Gamma_{c'}$ is compact because it is completely contained in $(\Aut G)_{v_0}$ which is compact by Lemma~\ref{lem:stab_v}. 

It is easy to see that $\Gamma_{c'}$ has infinite motion on the set of yet uncoloured vertices. Now let $c$ be the coloring obtained by randomly colouring all vertices that have not been coloured yet. We can apply Theorem~\ref{thm:zeromeasuregrp} to show that $c$ almost surely breaks almost every remaining automorphism of $G$.

$\Gamma_c$ forms a closed and hence compact subgroup of $(\Aut G)_{v_0}$. Since it has measure $0$ in $(\Aut G) _{v_0}$ it must also have measure $0$ in $\Aut G$. The property of being nowhere dense also carries over from $(\Aut G) _{v_0}$ to $\Aut G$.
\end{proof}

In the rest of this paper we will show that there are many classes of locally finite graphs for which Conjecture~\ref{con:random} can be verified. Before doing so, however, we would like to point out that the requirement of local finiteness is necessary. Otherwise the conjecture fails even for trees as the following theorem shows.

\begin{thm}
Denote by $T_\infty$ the regular tree with countably infinite degree and let $c$ be a random colouring of $T_\infty$ with finitely many colours. Then there is almost surely an automorphism of $T_\infty$ which preserves $c$.
\end{thm}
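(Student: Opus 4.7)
The plan is to build, almost surely, a nontrivial colour-preserving automorphism of $T_\infty$ by a back-and-forth construction. The crucial feature is that every vertex of $T_\infty$ has countably infinite degree, so for any fixed vertex $v$ and colour $c$ the neighbours of $v$ independently receive colour $c$ with positive probability, and by the second Borel--Cantelli lemma infinitely many of them do so almost surely. Since $V(T_\infty)$ is countable and the palette is finite, a countable intersection yields an event $E$ of probability $1$ on which every vertex has infinitely many neighbours of every colour, and we condition on $E$.

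Fix a root $v_0$ and pick two distinct neighbours $u_1,u_2$ of $v_0$ that share a colour. Enumerate $V(T_\infty)=\{v_0,v_1,\ldots\}$ and build an ascending chain $\varphi_0\subseteq\varphi_1\subseteq\cdots$ of partial colour-preserving graph isomorphisms with finite, connected domains, starting from $\varphi_0\colon v_0\mapsto v_0$, $u_1\mapsto u_2$, $u_2\mapsto u_1$; alternating stages will enforce that each $v_k$ eventually enters both the domain and the image of the limit map. To extend the current map $\varphi$ to a vertex $v$ outside its domain, take the unique path $w_0,w_1,\ldots,w_\ell=v$ from the domain to $v$ (where $w_0$ is the unique entry point, by connectedness of the domain) and define $\varphi(w_i)$ inductively as a neighbour of $\varphi(w_{i-1})$ of colour $c(w_i)$ that is not already in the image and is not adjacent to any image vertex other than $\varphi(w_{i-1})$.

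Such a neighbour exists because, in a tree, any two distinct vertices share at most one common neighbour, so only finitely many neighbours of $\varphi(w_{i-1})$ are forbidden by the two exclusion clauses, and by $E$ infinitely many of its neighbours carry colour $c(w_i)$. The ``back'' direction is symmetric, applied to $\varphi^{-1}$. The limit $\varphi=\bigcup_n\varphi_n$ is then a colour-preserving bijection of $V(T_\infty)$ whose restriction to any finite subset is a graph isomorphism, hence a colour-preserving automorphism; it is nontrivial since $\varphi(u_1)=u_2\ne u_1$. The main obstacle is the bookkeeping at the extension step---keeping the domain a connected subtree so entry points stay unique, and preventing spurious adjacencies in the image---both handled by the tree's ``at most one common neighbour'' property together with $E$.
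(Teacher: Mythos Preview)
Your argument is correct, and the reduction step---passing to the almost-sure event $E$ on which every vertex has infinitely many neighbours of each colour---is exactly what the paper does. The constructions of the automorphism differ, however. The paper fixes the root $v_0$, chooses a nontrivial colour-preserving permutation of its neighbours, and then simply extends level by level: for each $v$ in $S_{v_0}(n)$ it enumerates the children of $v$ of colour $j$ as $(w_i^{(v,j)})_{i\in\mathbb N}$ and sets $\varphi(w_i^{(v,j)})=w_i^{(\varphi v,j)}$. Because every child list is countably infinite in every colour, this is automatically a bijection on each sphere, and hence a bijection globally; no back step is required.

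Your back-and-forth is a heavier tool that would be appropriate in a homogeneous structure without a convenient level decomposition. Here the rooted tree structure makes surjectivity come for free, so the forward-only construction is shorter and avoids the bookkeeping about connected domains, forbidden neighbours, and spurious adjacencies that you (correctly) carry out. On the other hand, your write-up is more explicit about why $E$ has probability $1$ (second Borel--Cantelli plus a countable intersection), which the paper leaves to the reader.
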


\begin{proof}
First of all notice that in a random colouring every vertex has infinitely many neighbours of each colour. Hence it suffices to find a nontrivial automorphism preserving a coloring with this property.

Let $c$ be such a coloring and choose a vertex $v_0$ of $T_\infty$. Define $\varphi v_0 = v_0$. Next choose an arbitrary colour-preserving permutation $\pi$ of the neighbours of $v_0$ and define $\varphi v = \pi v$ for every neighbour $v$ of $v_0$. 

Now assume that $\varphi$ has already been defined for all vertices $v$ with $d(v, v_0) \leq n$. For a vertex $v$ with $d(v, v_0) = n$ let $(w_i^{(v,j)})_{i \in \mathbb N}$ be an enumeration of the neighbours of $v$ with colour $j$ which lie further away from $v_0$ then $v$. Recall that there are always countably many such neighbours, hence the sequence will be infinite.

Now define $\varphi w_i^{(v,j)} = w_i^{(\varphi v,j)}$. Clearly this assignment is bijective if the assignment on $S_{v_0}(n)$ is bijective. Notice that this is the case since we started with a permutation for $n=1$. It is also straightforward to check that it preserves adjacency and colours.

Proceeding inductively we obtain the desired automorphism.
\end{proof}

In the remainder of this section we will focus on examples of graphs, where a random colouring is almost surely distinguishing. The following lemma which is a direct consequence of Lemma~\ref{lem:fixclasses} will be of great use.

Define the \emph{sphere around $v_0$ with radius $n$} by $S_{v_0}(n) = \{v \in V \mid d(v_0,v) = n\}$. We call two vertices $u$ and $v$ sphere equivalent ($u \sim_S v$), if there is an automorphism of $G$ which maps $u$ to $v$ and an integer $n_0 \in \mathbb N$ such that $S_u(n) = S_v(n)$ for every $n \geq n_0$. It is easy to verify that this is indeed an equivalence relation.

\begin{lem}
\label{lem:sim_s}
Let $G$ be a locally finite graph with infinite motion. A random colouring almost surely fixes all equivalence classes with respect to $\sim_S$ setwise.
\end{lem}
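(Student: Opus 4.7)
The plan is to deduce Lemma~\ref{lem:sim_s} directly from Lemma~\ref{lem:fixclasses} applied to $\Gamma = \Aut G$ (which is closed, subdegree finite, and of infinite motion on $V(G)$ for a locally finite connected graph of infinite motion, since vertex stabilisers preserve distance and all spheres are finite). The entire task reduces to showing that $\sim_S$ is coarser than $\sim_{\Aut G}$, that is,
\[
	u \sim_{\Aut G} v \;\Longrightarrow\; u \sim_S v \qquad \text{for all } u, v \in V(G).
\]
Given this implication, every sphere-equivalence class $[v]_S$ is a union of $\sim_{\Aut G}$-classes, and hence any permutation stabilising each $\sim_{\Aut G}$-class setwise also stabilises each $\sim_S$-class setwise. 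Lemma~\ref{lem:fixclasses} then delivers the claim.

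To prove the implication, suppose $u \sim_{\Aut G} v$ and let $\varphi \in \Aut G$ with $\varphi u = v$ be a witness, so the set $E = \{x \in V \mid \Gamma_u x \neq \varphi \Gamma_u x\}$ is finite (writing $\Gamma = \Aut G$). Since $\varphi \Gamma_u \varphi^{-1} = \Gamma_v$, the orbit condition can be rewritten as $\Gamma_u x = \Gamma_v \varphi x$ for every $x \notin E$. In particular $x \in \Gamma_v \varphi x$, so $x = \delta \varphi x$ for some $\delta \in \Gamma_v$; using that $\varphi$ and $\delta$ are graph isometries one computes
\[
	d(v, x) = d(v, \delta \varphi x) = d(v, \varphi x) = d(\varphi u, \varphi x) = d(u, x).
\]
Choosing $n_0 = 1 + \max_{y \in E} d(u, y)$, for $n \geq n_0$ the sphere $S_u(n)$ is disjoint from $E$, so every $x \in S_u(n)$ satisfies $d(v,x) = d(u,x) = n$ and therefore $S_u(n) \subseteq S_v(n)$. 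The reverse inclusion follows by the same reasoning applied to the witness $\varphi^{-1}$ for $v \sim_{\Aut G} u$ (produced in the proposition preceding Lemma~\ref{lem:fixclasses}). Combining the two, $S_u(n) = S_v(n)$ for all sufficiently large $n$, which together with $\varphi u = v$ is precisely $u \sim_S v$.

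The argument is essentially a clean unravelling of the two definitions and presents no real obstacle. The one point requiring care is identifying the correct direction of implication: we need $\sim_{\Aut G}$ to refine $\sim_S$, so that fixing the finer partition forces fixing the coarser one, which is why we prove the algebraic orbit condition implies the geometric sphere condition and not the other way around. Local finiteness enters only implicitly, through the subdegree finiteness of $\Aut G$ that licenses the use of Lemma~\ref{lem:fixclasses}.
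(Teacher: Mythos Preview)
Your argument is correct and follows exactly the route taken in the paper: apply Lemma~\ref{lem:fixclasses} to $\Gamma=\Aut G$ and note that $\sim_\Gamma$ refines $\sim_S$. The paper merely asserts this refinement without justification; your computation showing $d(u,x)=d(v,x)$ for all $x$ outside the finite exceptional set supplies the missing detail and is carried out cleanly.
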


\begin{proof}
Recall that the automorphism group of a locally finite graph is always subdegree finite. For $\Gamma = \Aut G$ the relation $\sim_\Gamma$ defined in Section~\ref{sec:randomcolour} is finer than $\sim _S$. Since by Lemma~\ref{lem:fixclasses} a random colouring almost surely fixes every equivalence class with respect to $\sim_\Gamma$, it also almost surely fixes every equivalence class with respect to $\sim_S$.
\end{proof}

\subsection{The distinct spheres condition}

The \emph{distinct spheres condition (DSC)} was introtuced in \cite{smtuwa} as a sufficient condition for $2$-distinguishability of graphs. It is also shown that such a graph has infinite motion and hence supports Conjecture \ref{con:tucker}. In this subsection we show that if a locally finite graph satisfies DSC, then it is also supports Conjecture \ref{con:random}, that is, a random colouring is almost surely distinguishing.

First, let us define the condition. A graph $G=(V,E)$ is said to satisfy DSC if there is a vertex $v_0 \in V$ such that for any pair $x,y$ of distinct vertices $d(v_0,x) = d(v_0,y)$ implies that $S_x(n) \neq S_y(n)$ for infinitely many (or equivalently: all but finitely many) $n \in \mathbb N$. Clearly this implies that if $d(v_0,x) = d(v_0,y)$ then $x \nsim_S y$ and we can deduce the following result.

\begin{thm}
\label{thm:distinct_spheres}
If a locally finite graph $G = (V,E)$ satisfies DSC, then a random $2$-colouring $c$ is almost surely distinguishing.
\end{thm}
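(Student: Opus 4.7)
The plan is to combine Lemma~\ref{lem:sim_s} with DSC to reduce the statement to a purely deterministic claim. Since DSC implies infinite motion (as shown in \cite{smtuwa}), Lemma~\ref{lem:sim_s} applies and a random $2$-colouring $c$ almost surely fixes every $\sim_S$-equivalence class setwise. Thus almost surely every $\varphi \in (\Aut G)_c$ satisfies $\varphi v \sim_S v$ for all $v \in V$, and it suffices to show that under DSC any such $\varphi$ must be the identity.

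To prove this deterministic statement I would set $u = \varphi v_0$. Because $u \sim_S v_0$, there is some $n_0$ with $S_u(n) = S_{v_0}(n)$ for all $n \geq n_0$. As $\varphi$ is an isometry of $G$, it maps $S_{v_0}(n)$ bijectively onto $S_{\varphi v_0}(n) = S_u(n)$, so for every $n \geq n_0$ it actually permutes the (finite) set $S_{v_0}(n)$. Now for any two distinct $x,y \in S_{v_0}(n)$ we have $d(v_0,x) = d(v_0,y)$, so DSC yields $S_x(m) \neq S_y(m)$ for all sufficiently large $m$, whence $x \nsim_S y$. Hence each $\sim_S$-class meets $S_{v_0}(n)$ in at most one vertex, and since $\varphi$ preserves every such class setwise it must fix $S_{v_0}(n)$ pointwise for every $n \geq n_0$.

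Consequently $\varphi$ fixes every vertex outside the finite ball of radius $n_0 - 1$ around $v_0$, so it moves only finitely many vertices; invoking infinite motion once more forces $\varphi = \id$. We conclude that $(\Aut G)_c = \{\id\}$ almost surely, as required. The main subtlety is the transition in the second paragraph: $\varphi$ permutes the large spheres $S_{v_0}(n)$ not because it fixes $v_0$ (which need not hold a priori), but because the eventual sphere equality built into the definition of $\sim_S$ forces $S_{\varphi v_0}(n)$ and $S_{v_0}(n)$ to coincide for large $n$. Once this observation is in place, the remaining combinatorics are essentially immediate from DSC together with infinite motion.
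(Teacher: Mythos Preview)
Your argument is correct and follows the same route as the paper: invoke Lemma~\ref{lem:sim_s}, then show deterministically that any automorphism preserving all $\sim_S$-classes must be the identity by using DSC to see that distinct vertices in a common sphere $S_{v_0}(n)$ are never $\sim_S$-equivalent. Your presentation is in fact slightly cleaner than the paper's, since you avoid the separate case split on whether $\varphi v_0 = v_0$ and conclude directly via infinite motion once $\varphi$ is shown to fix all sufficiently large spheres pointwise.
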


\begin{proof}
By Lemma~\ref{lem:sim_s} it suffices to show, that an automorphism which is contained in the setwise stabiliser of each equivalence class with respect to $\sim_S$ is necessarily the identity.

Let $\varphi$ be a non-trivial automorphism of $G$. If $\varphi v_0 \nsim_S v_0$ then $\varphi$ is not contained in the setwise stabiliser of all equivalence classes with respect to $\sim_S$.

So assume $\varphi v_0 \sim_S v_0$. If $\varphi v_0 = v_0$ then $\varphi$ stabilises all spheres with center $v_0$ setwise. Since $\varphi \neq \id$ there must be some $n \in \mathbb N$ such that $\varphi$ acts nontrivially on $\sph n$. 

If $\varphi v_0 \neq v_0$ but $\varphi v_0 \sim_S v_0$ then there is some $n_0 \in \mathbb N$ such that $\varphi$ stabilises $\sph n$ for $n>n_0$. Since $\varphi$ acts nontrivially on $B_{v_0}(n)$ it must also act nontrivially on the boundary $\sph n$.

Since $x \nsim_S y$ for any two vertices $x,y \in \sph n$ we can conclude that $\varphi$ again is not contained in the setwise stabiliser of all equivalence classes with respect to $\sim_S$.

Hence for every nontrivial automorphism of $G$ there is an equivalence class with respect to $\sim_S$ which is not setwise stabilised by $\varphi$.
\end{proof}

\begin{cor}
\label{cor:distinct_spheres}
Let $G$ be an infinite, locally finite graph. Then each of the following properties implies that a random $2$-colouring is almost surely distinguishing:
\begin{itemize}
\item $G$ is a leafless tree,
\item $G$ can be written as a product of two infinite factors,
\item the automorphism group of $G$ acts primitively on the vertex set,
\item $G$ is vertex-transitive and has connectivity $1$.
\end{itemize}
\end{cor}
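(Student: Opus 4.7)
The plan is to apply Theorem~\ref{thm:distinct_spheres} to each of the four cases. Since that theorem asserts almost sure distinguishability as soon as DSC holds, the corollary reduces to the statement that each of the four listed properties implies DSC for some choice of base vertex $v_0$. These four implications are already established in \cite{smtuwa}, where DSC was introduced precisely so as to cover these classes, so the proof is essentially a combination of Theorem~\ref{thm:distinct_spheres} with the relevant verifications from that paper.

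For completeness I would briefly sketch why DSC holds in each case. For leafless trees, pick any $v_0$; for two distinct vertices $x,y$ with $d(v_0,x) = d(v_0,y)$, the paths from $v_0$ split at a common ancestor, so beyond some radius the spheres $\sph[x]{n}$ and $\sph[y]{n}$ (taken in the tree) lie in disjoint subtrees, and leaflessness ensures these forward spheres are always non-empty. For Cartesian products $G = G_1 \sq G_2$ with both factors infinite, one uses that spheres decompose as unions of products of spheres in the factors, so two distinct vertices of $G$ eventually have distinguishable sphere data provided both factors contribute arbitrarily distant vertices. For primitive automorphism group, one first observes that primitivity implies transitivity, hence an infinite connected graph automatically has infinite diameter (which is the hypothesis used in \cite{smtuwa}), and then primitivity rules out any non-trivial block system on the vertex set, which in turn forbids two vertices from having coincident sphere sequences in all but finitely many radii. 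For vertex-transitive graphs of connectivity $1$, the cut-vertex structure forces the graph to decompose as a tree-like assembly of blocks, and choosing $v_0$ as a cut vertex makes the DSC verification very similar to the leafless tree case.

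No new idea is needed beyond invocation of Theorem~\ref{thm:distinct_spheres}; the main obstacle, if any, is purely bibliographic, namely locating the exact statements in \cite{smtuwa} and checking that the mild reformulation between "primitive $+$ infinite diameter" there and "primitive on an infinite vertex set" here is harmless (which it is, by the transitivity remark above). Consequently the written proof can be as short as one sentence: each of the four properties is shown in \cite{smtuwa} to imply DSC, and the conclusion then follows from Theorem~\ref{thm:distinct_spheres}.
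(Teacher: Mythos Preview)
Your proposal is correct and matches the paper's own proof, which is literally the single sentence ``All of these graphs satisfy DSC by \cite{smtuwa}.'' Your additional sketches of why DSC holds in each case go beyond what the paper records, but the underlying argument---reduce to Theorem~\ref{thm:distinct_spheres} and cite \cite{smtuwa} for the DSC verifications---is identical.
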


\begin{proof}
All of these graphs satisfy DSC by \cite{smtuwa}.
\end{proof}

\subsection{Graphs with a global tree structure}

Trees are the probably most elementary example for a family of graphs which is known to satisfy Conjecture~\ref{con:tucker}. As we have seen, leafless trees also satisfy Conjecture~\ref{con:random}. The following corollary to Theorem~\ref{thm:distinct_spheres} shows, that the same holds true for arbitrary trees with infinite motion.

\begin{cor}
A random colouring of a locally finite tree with infinite motion is almost surely distinguishing.
\end{cor}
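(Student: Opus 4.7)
The plan is to reduce the statement to Lemma~\ref{lem:sim_s}. Since the automorphism group of a locally finite graph is closed and subdegree finite, that lemma already tells us that almost surely $\Gamma_c$ is contained in the setwise stabiliser of every $\sim_S$-equivalence class. It therefore suffices to prove that, for $T$ a locally finite tree with infinite motion, every $\sim_S$-class is a singleton; then $\Gamma_c$ must fix every vertex pointwise, and so $c$ is distinguishing.

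Assume for contradiction that $x \neq y$ satisfy $x \sim_S y$, witnessed by some $\varphi \in \Aut T$ with $\varphi x = y$. First observe that $d(x, y)$ must be even: in a tree no vertex is equidistant from two points at odd distance, so otherwise $S_x(n) \cap S_y(n) = \emptyset$ for every $n$, contradicting $S_x(n) = S_y(n)$ for large $n$. Writing $d(x, y) = 2k$, let $\Pi : x = w_0, w_1, \ldots, w_{2k} = y$ be the unique $xy$-path, set $p := w_k$, and let $B_j$ be the set of vertices of $T$ whose closest vertex on $\Pi$ is $w_j$. For $v \in B_j$ one has $d(x, v) = j + d(w_j, v)$ and $d(y, v) = (2k - j) + d(w_j, v)$, so $S_x(n) \cap B_j$ and $S_y(n) \cap B_j$ live at two different distance levels of $B_j$ from $w_j$ whenever $j \neq k$. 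Requiring $S_x(n) = S_y(n)$ for all large $n$, together with local finiteness of $T$, forces $B_j$ to be finite for every $j \neq k$; in particular the two ``prongs'' $U_x := \bigcup_{j < k} B_j$ and $U_y := \bigcup_{j > k} B_j$ are finite subtrees attached to the rest of $T$ through the edges $\{w_{k-1}, p\}$ and $\{p, w_{k+1}\}$.

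The central step is showing $\varphi p = p$. A tree automorphism is either elliptic (fixing a vertex, or fixing an edge and swapping its endpoints) or hyperbolic (translating along an axis). The fixed-edge case is ruled out because it makes $d(z, \varphi z)$ odd for every $z$, contradicting $d(x, \varphi x) = 2k$. In the hyperbolic case, with axis $\alpha$ and translation length $t$, the standard identity $d(v, \varphi v) = 2\,d(v, \alpha) + t$ gives $d(x, \alpha) = k - t/2$, and a short geometric computation places each $\varphi^{-n} x$ in $B_{k - t/2}$ at distance $k - t/2 + nt$ from $w_{k - t/2}$; these distances are unbounded in $n$, contradicting the finiteness of $B_{k - t/2}$ just established. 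Thus $\varphi$ is elliptic and fixes some vertex $v^*$; from $d(v^*, x) = d(v^*, \varphi x) = d(v^*, y)$, we see $v^* \in B_k$. Because $\varphi$ fixes $v^*$ and maps the unique path $v^* \to x$ onto the unique path $v^* \to y$, and because these two paths share an initial segment ending exactly at $p$ (by the definition of $B_k$), a short induction along this segment forces $\varphi$ to fix every one of its vertices, including $p$.

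With $\varphi p = p$ and $\varphi x = y$, $\varphi$ must reverse $\Pi$ about $p$, sending $w_{k-1}$ to $w_{k+1}$ and restricting to an isomorphism $U_x \to U_y$ of finite rooted subtrees. I then define a permutation $\sigma$ of $V$ acting as $\varphi|_{U_x}$ on $U_x$, as $(\varphi|_{U_x})^{-1}$ on $U_y$, and as the identity elsewhere. A routine edge check shows $\sigma \in \Aut T$: edges inside $U_x$ and inside $U_y$ are preserved because $\varphi$ is an automorphism, the two boundary edges $\{w_{k-1}, p\}$ and $\{w_{k+1}, p\}$ are swapped, and all other edges are fixed. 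But $\sigma x = y \neq x$ while $\sigma$ moves only the finite set $U_x \cup U_y$, contradicting the infinite motion of $T$. The step I expect to be the main obstacle is establishing $\varphi p = p$, where both excluding the hyperbolic case and propagating the fixed vertex $v^*$ along the shared initial segment to $p$ rest on careful tree-geometric reasoning.
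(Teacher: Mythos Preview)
Your proof is correct. The approach, however, differs from the paper's. The paper does not attempt to show that $\sim_S$ is trivial on all of $T$; instead it passes to the subgraph $T'$ induced by those vertices whose removal leaves at least two infinite components, notes that $T'$ is a leafless tree and hence satisfies DSC (so Theorem~\ref{thm:distinct_spheres} applies), and disposes of the finite hanging pieces with the remark that ``we can ignore finite subtrees'' under the infinite-motion hypothesis. Your argument instead establishes directly---via the elliptic/hyperbolic classification of tree automorphisms and the explicit construction of the finite-motion automorphism $\sigma$---that $\sim_S$ is already trivial on the full vertex set of $T$. This yields a slightly stronger intermediate statement and makes the corollary self-contained (no appeal to Theorem~\ref{thm:distinct_spheres} or to DSC), at the price of the case analysis in your step~5. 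Both routes ultimately rest on producing a finite-motion automorphism from a putative non-trivial $\sim_S$-pair; the paper's proof hides this inside ``ignore finite subtrees'', while you make it explicit with $\sigma$. Your concern about step~5 is unfounded: ruling out the hyperbolic case via the unbounded sequence $\varphi^{-n}x$ in the finite set $B_{k-t/2}$, and propagating the fixed vertex $v^*$ down to $p$ along the shared initial segment of the $v^*$--$x$ and $v^*$--$y$ paths, are both sound.
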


\begin{proof}
Since we assume infinite motion we can ignore finite subtrees and consider the subgraph induced by those vertices whose removal results in at least $2$ infinite components. On this set the relation $\sim_S$ is easily seen to be trivial. Alternatively one could note that the resulting graph is a leafless tree and hence stisfies the DSC.
\end{proof}

Tree like graphs are graphs with the following property: there is a vertex $v_0 \in V$ such that every vertex $v \in V$ has a neighbour $w$ such that $v$ lies on every shortest $w$-$v_0$-path. It is readily verified that this class of graphs again staisfies DSC.

\begin{cor}
A random colouring of a locally finite, tree like graph is almost surely distinguishing. {\hfill \qedsymbol}
\end{cor}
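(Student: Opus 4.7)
The plan is to reduce the corollary to Theorem~\ref{thm:distinct_spheres} by verifying that every locally finite tree-like graph satisfies the distinct spheres condition with respect to the distinguished vertex $v_0$ that comes with the tree-likeness. Once DSC is established, Theorem~\ref{thm:distinct_spheres} gives the conclusion at once. Note that DSC automatically forces infinite motion (as was used already for the previous corollary), so no additional hypothesis is needed.

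The key construction is that of a \emph{descent ray} from any vertex $v$. For each vertex $u$, the tree-like property supplies a neighbour $u'$ such that $u$ lies on every shortest $u'$-$v_0$-path; in particular $d(v_0, u') = d(v_0, u) + 1$. Starting from $v = v^{(0)}$ I would iterate this and set $v^{(i+1)} := (v^{(i)})'$, obtaining a sequence $v^{(0)}, v^{(1)}, v^{(2)}, \ldots$ with $d(v_0, v^{(i)}) = d(v_0, v) + i$. A short induction on $i$, using the tree-like defining property at each step, then shows that every shortest $v^{(i)}$-$v_0$-path passes through each earlier $v^{(j)}$, and in particular through $v$. Combining this with $d(v_0, v^{(i)}) = d(v_0, v) + i$ gives $d(v, v^{(i)}) = i$, so $v^{(i)} \in S_v(i)$ for every $i$.

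Now suppose $x \neq y$ are two vertices with $d(v_0,x) = d(v_0,y) = n$; I want to show $S_x(m) \neq S_y(m)$ for \emph{every} sufficiently large $m$ (in fact for all $m$). Fix a descent ray $x = x^{(0)}, x^{(1)}, x^{(2)}, \ldots$ from $x$. Then $x^{(m)} \in S_x(m)$. Assume, for contradiction, that $x^{(m)} \in S_y(m)$ as well, i.e.\ $d(y, x^{(m)}) = m$. Concatenating a shortest $v_0$-$y$-path with a shortest $y$-$x^{(m)}$-path produces a walk from $v_0$ to $x^{(m)}$ of length $n+m = d(v_0, x^{(m)})$, hence a shortest $v_0$-$x^{(m)}$-path passing through $y$. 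But by the previous paragraph every shortest $v_0$-$x^{(m)}$-path passes through $x$, and a shortest path contains exactly one vertex at each distance from $v_0$; since both $x$ and $y$ sit at distance $n$ from $v_0$, this forces $x = y$, a contradiction. Therefore $S_x(m) \neq S_y(m)$ for all $m$, confirming DSC.

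The only mildly delicate point is the inductive verification that the descent ray actually escapes to infinity and that every shortest $v_0$-$v^{(m)}$-path runs through each earlier $v^{(j)}$; this is just a careful unwinding of the tree-like axiom applied successively, so no real obstacle arises. With DSC in hand, Theorem~\ref{thm:distinct_spheres} immediately yields that a random $2$-colouring of $G$ is almost surely distinguishing, completing the proof.
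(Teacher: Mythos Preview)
Your proposal is correct and follows exactly the approach the paper indicates: verify that a tree-like graph satisfies the distinct spheres condition with respect to the distinguished vertex $v_0$, and then invoke Theorem~\ref{thm:distinct_spheres}. The paper merely asserts that DSC is ``readily verified'' for this class, whereas you supply the explicit descent-ray argument; the content and route are the same.
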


It is a well known fact that every graph has an end faithful spanning tree \cite{MR2159259}, that is, the ends of a graph can be seen as the ends of a spanning tree of the same graph. We now show that this large-scale tree structure is also almost surely preserved by every automorphism that preserves a random colouring. First of all we show that if $G$ has more than one end, then $\Gamma_c$ is almost surely compact and hence by Lemma~\ref{lem:whencompact} stabilises a finite set which plays the role of a root. Hence translations can only happen on a small scale. Then we show that such an automorphism almost surely fixes every end. Both of these results are again consequences of Lemma~\ref{lem:sim_s}.

\begin{lem}
\label{lem:twoends_comp}
Let $c$ be a random colouring of a locally finite graph with at least two ends. Then $(\Aut G)_c$ is almost surely compact.
\end{lem}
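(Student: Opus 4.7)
The plan is to combine Lemma~\ref{lem:sim_s} with Lemma~\ref{lem:whencompact}. By Lemma~\ref{lem:sim_s}, almost surely $(\Aut G)_c$ setwise stabilises every $\sim_S$-equivalence class, so the orbit of any vertex $v$ under $(\Aut G)_c$ is contained in the class of $v$. Consequently, exhibiting a single vertex whose $\sim_S$-class is finite would already be enough: applying Lemma~\ref{lem:whencompact} to the closed subgroup $(\Aut G)_c$ then yields that $(\Aut G)_c$ is compact.

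To produce such a $v$, I would first record a structural fact. Suppose $u \sim_S v$ via $\varphi \in \Aut G$ with $\varphi v = u$ and witness $n_0$, so that $S_v(n) = S_u(n)$ for every $n \geq n_0$. Taking unions over $n \geq n_0$ and then complementing gives
\[
B_v(n_0 - 1) \;=\; V \setminus \bigcup_{n \geq n_0} S_v(n) \;=\; V \setminus \bigcup_{n \geq n_0} S_u(n) \;=\; B_u(n_0 - 1),
\]
so in particular $u \in B_v(n_0-1)$. Moreover $\varphi(S_v(n)) = S_{\varphi v}(n) = S_u(n) = S_v(n)$ for $n \geq n_0$, which means that $\varphi$ setwise stabilises $V \setminus B_v(n_0-1)$ and hence also the finite ball $B_v(n_0-1)$. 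Now the two-ends hypothesis enters: choose $N$ so that $G \setminus B_v(N)$ has at least two infinite components. Since enlarging $n_0$ only weakens the sphere condition, I may assume $n_0 - 1 \geq N$, and then $\varphi$ permutes the finitely many infinite components of $G \setminus B_v(n_0-1)$, i.e., it induces a permutation of the ends of $G$.

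The main obstacle is to upgrade these constraints to an honest finiteness statement for $[v]_{\sim_S}$. The rigidity we have is that $d(v, \cdot) = d(u, \cdot)$ outside a bounded set, which translates into $\beta_R(u) = \beta_R(v)$ for the Busemann function of every ray $R$ in every end of $G$. With at least two ends at our disposal, pick rays $R_1, R_2$ to distinct ends; I expect the joint map $(\beta_{R_1},\beta_{R_2})\colon V \to \mathbb{Z}^2$ to have only finite fibres in a locally finite graph, because vertices with a prescribed pair of Busemann values cluster near a bi-infinite geodesic joining the two ends, and local finiteness then permits only finitely many. Verifying this fibre-finiteness is the combinatorial core of the argument; once it is in hand, $[v]_{\sim_S}$ is a subset of one such fibre, hence finite, and Lemma~\ref{lem:whencompact} closes the proof.
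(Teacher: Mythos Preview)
Your high-level strategy is exactly the paper's: combine Lemma~\ref{lem:sim_s} with Lemma~\ref{lem:whencompact}, reducing the problem to showing that some $\sim_S$-class is finite. Where you diverge from the paper is in how you try to establish that finiteness, and this is where your argument is incomplete.

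You correctly observe that $u \sim_S v$ forces $d(u,\cdot) = d(v,\cdot)$ outside a bounded set and hence $\beta_R(u) = \beta_R(v)$ for every Busemann function $\beta_R$. You then propose to show that the joint map $(\beta_{R_1},\beta_{R_2})$ for rays to two distinct ends has finite fibres, but you explicitly leave this unproved. This is the gap. The claim is in fact true: if $F$ is a finite set separating the two ends and $x$ lies in the component $C_1$ containing the tail of $R_1$, then any geodesic from $x$ to $r_n^{(2)}$ must pass through $F$, and comparing $d(x,r_n^{(2)}) = n + \beta_{R_2}(x)$ with $d(F,r_n^{(2)}) \geq n - D$ for a constant $D$ gives $d(x,F) \leq \beta_{R_2}(x) + D$; the symmetric bound handles the other components. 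So your route can be completed, but it is heavier than necessary.

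The paper avoids Busemann functions entirely with a two-line distance argument. Fix $v$ and choose $n_0$ so that $G \setminus B_v(n_0)$ has at least two infinite components. If $w \sim_S v$ and $d(v,w) \geq 2n_0 + 1$, then $w$ lies in one infinite component of $G \setminus B_v(n_0)$; pick $u$ far out in a \emph{different} infinite component. Any $w$--$u$ path must enter $B_v(n_0)$, and since $d(w,B_v(n_0)) \geq d(v,w) - n_0 \geq n_0+1$ while $d(v,B_v(n_0)) \leq n_0$, one gets $d(w,u) > d(v,u)$. But $u$ can be taken with $d(v,u)$ arbitrarily large, so eventually $d(v,u) = d(w,u)$ by the sphere condition---contradiction. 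Hence $[v]_{\sim_S} \subseteq B_v(2n_0)$, which is finite.

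Note also that your intermediate observation $B_v(n_0-1) = B_u(n_0-1)$, while correct, does not by itself bound the class, because the witnessing $n_0$ varies with $u$; the paper's argument produces a bound depending only on the separator radius at $v$. Your remark about $\varphi$ permuting the ends is likewise correct but not needed.
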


\begin{proof}
By Lemma~\ref{lem:whencompact} it suffices to show that there is a finite orbit which is the case if the equivalence class of some vertex $v$ with respect to $\sim_S$ is finite.

So let $v \in V$. There is a ball $B_v(n_0)$ such that $G \setminus B_v(n_0)$ has at least two infinite components. Assume that there is a vertex $w \sim_S v$ such that $d(v,w) \geq 2n_0+1$ and assume that $S_v(n) = S_w(n)$ for every $n>N$. 

Now notice that if $u$ lies in a different component of $G \setminus B_v(n)$ than $w$, then every path from $w$ to $u$ has to pass through $B_v(n_0)$. But this implies that $d(v,u) < d(w,u)$ since a shortest path from $v$ to $u$ takes $n_0$ steps before exiting $B_v(n_0)$ while a shortest $w$-$u$-path takes $n+1$ steps to reach $B_v(n_0)$.

So all vertices that are equivalent to $v$ must lie within the ball $B_v(2 n_0)$ which is finite.
\end{proof}

\begin{lem}
\label{lem:fixends}
Let $c$ be a random colouring of a locally finite graph. Then $(\Aut G)_c$ almost surely only contains automorphisms which fix the set of ends of $G$ pointwise.
\end{lem}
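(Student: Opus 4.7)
The plan is to derive Lemma~\ref{lem:fixends} from Lemma~\ref{lem:sim_s}. That lemma ensures that almost surely every $\varphi \in (\Aut G)_c$ stabilises every $\sim_S$-equivalence class of $V$ setwise; it therefore suffices to show that any $\varphi \in \Aut G$ with this property fixes every end. If $G$ has at most one end, the statement is vacuous, so I would assume $G$ has at least two ends. In that case the argument given in the proof of Lemma~\ref{lem:twoends_comp} shows that every $\sim_S$-class $[v]_S$ is finite, and hence $\langle \varphi \rangle \cdot v \subseteq [v]_S$ is finite for every $v$; in other words, $\varphi$ has all orbits on $V$ finite.

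Suppose for contradiction that $\varphi \omega = \omega' \neq \omega$ for some end $\omega$, and pick a finite vertex set $F$ separating $\omega$ and $\omega'$. Using finiteness of $\varphi$-orbits, the $\varphi$-saturation $\tilde F = \bigcup_{k \in \mathbb Z}\varphi^k F$ is finite and $\varphi$-invariant, and it still separates $\omega$ from $\omega'$. Writing $C$ and $C'$ for the infinite components of $G \setminus \tilde F$ containing $\omega$ and $\omega'$, we have $\varphi C = C' \neq C$, and any ray $R = v_0, v_1, \ldots$ in $\omega$ satisfies $v_i \in C$ and $\varphi v_i \in C'$ for all sufficiently large $i$.

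The key step is to contradict $v_i \sim_S \varphi v_i$. Unfolding $S_{v_i}(n) = S_{\varphi v_i}(n)$ for all large $n$ gives $d(v_i, y) = d(\varphi v_i, y)$ for all but finitely many $y \in V$. But because $\varphi \tilde F = \tilde F$ (so that $d(\varphi v_i, \tilde F) = d(v_i, \tilde F)$), and $v_i$, $\varphi v_i$ lie in different components of $G \setminus \tilde F$, every $\varphi v_i$-to-$y$ path with $y \in C$ must cross $\tilde F$, yielding $d(\varphi v_i, y) \geq d(v_i, \tilde F) + d(\tilde F, y)$, whereas $d(v_i, y)$ admits paths staying in $C$. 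Choosing a geodesic ray $y_0 = v_i, y_1, y_2, \ldots$ in $C$ going to an end of $C$ (so that $d(\tilde F, y_n) \to \infty$), we get $d(v_i, y_n) = n$ while $d(v_i, \tilde F) + d(\tilde F, y_n)$ strictly exceeds $n$ for all large $n$, producing infinitely many $y_n$ with disagreeing distances and contradicting $v_i \sim_S \varphi v_i$.

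The main obstacle is justifying the strict inequality $d(\tilde F, y_n) > n - d(v_i, \tilde F)$ for all but finitely many $n$. The triangle inequality provides the corresponding non-strict bound, and equality would force each such $y_n$ to lie on a geodesic from $\tilde F$ that passes through $v_i$; since the ray in $C$ eventually escapes every finite neighbourhood of $\tilde F$ and $C$ is locally finite, this equality can only persist for finitely many $n$, which is the quantitative input that finishes the argument.
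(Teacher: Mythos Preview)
Your overall strategy---reduce to Lemma~\ref{lem:sim_s} and then show that any $\varphi$ stabilising every $\sim_S$-class must fix every end---is exactly the paper's approach. The detour through finite orbits and the $\varphi$-invariant saturation $\tilde F$ is harmless but unnecessary; the paper simply separates $\omega$ and $\varphi\omega$ by a ball $B_{v_0}(n)$ and appeals to the computation in the proof of Lemma~\ref{lem:twoends_comp}.

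There is, however, a genuine gap in your final paragraph. The ``non-strict bound'' you claim, $d(\tilde F, y_n) \geq n - d(v_i,\tilde F)$, does \emph{not} follow from the triangle inequality. For a set $\tilde F$ one only gets $d(f, y_n) \geq n - d(v_i, f)$ for each individual $f$, and hence $d(\tilde F, y_n) \geq n - \max_{f \in \tilde F} d(v_i, f)$; replacing the maximum by the minimum $d(v_i, \tilde F)$ is illegitimate in general. Consequently your lower bound $d(\varphi v_i, y_n) \geq d(v_i,\tilde F) + d(\tilde F, y_n)$, while correct, need not exceed $n$, and the sentence about ``$y_n$ lying on a geodesic from $\tilde F$ through $v_i$'' does not repair this.

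The clean fix is to choose the ray more carefully. Take a \emph{geodesic} ray $v_0, v_1, v_2, \ldots$ in $G$ converging to $\omega$, with $v_0$ the centre of a ball $B_{v_0}(N)$ separating $\omega$ from $\varphi\omega$. For $j>i$ large one has $d(v_i, v_j)=j-i$, while any $\varphi v_i$--$v_j$ geodesic must cross $B_{v_0}(N)$ at some $b$, giving
\[
d(\varphi v_i, v_j) \;\geq\; d(\varphi v_i, b) + d(b, v_j) \;\geq\; \bigl(d(v_0,\varphi v_i)-N\bigr) + (j-N).
\]
Since $d(v_0,\varphi v_i)\to\infty$, for all sufficiently large $i$ the right-hand side exceeds $j-i$ for every $j>i$, yielding $v_i \nsim_S \varphi v_i$. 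This is the quantitative content behind the paper's phrase ``by similar arguments as in the proof of the previous theorem''; the point is that the separating set sits at the \emph{start} of the geodesic ray, so distances from it to $v_i$ and to $v_j$ are both controlled exactly, whereas your ray based at $v_i$ loses this control.
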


\begin{proof}
For one-ended graphs there is nothing to show, so we may assume that $G$ has at least $2$ ends. A random coloring is almost surely only preserved by automorphisms which stabilise the equivalence classes with respect to $\sim_S$ setwise. Hence it suffices to show that every such automorphism also fixes the set $\Omega$ of ends of $G$ pointwise. 

 Assume that $\varphi$ is contained in the setwise stabiliser of each equivalence class and that $\varphi \omega \neq \omega$ for some end $\omega$ of $G$. Let $(v_i)_{i \in \mathbb N}$ be a sequence of vertices converging to $\omega$. The sequence $(\varphi v_i)_{i \in \mathbb N}$ will converge to $\varphi \omega$ and hence $v_i$ and $\varphi v_i$ will lie in different infinite components of $G \setminus B_{v_0}(n)$ for large enough $n$ and $i$. By similar arguments as in the proof of the previous theorem this implies that $v_i \nsim_S \varphi v_i$ for large values of $i$.
 
 So $\varphi$ does not stabilise the equivalence classes with respect to $\sim_S$ setwise, a contradiction.
\end{proof}

\subsection{Cartesian products}

Another class of graphs where $2$-distinguishability results are known are Cartesian products. The \emph{Cartesian product} of two graphs $G_1 = (V_1,E_1)$ and $G_2 =(V_2,E_2)$ is the graph $G=(V,E)$ where $V = V_1 \times V_2$ and two vertices $(v_1,v_2)$ and $(w_1,w_2)$ are adjacent if $v_1w_1 \in E_1$ and $v_2 = w_2$ or $v_1 = w_1$ and $v_2w_2 \in E_2$. In this case we write $G = G_1 \sq G_2$. It is easy to see that the Cartesian product is associative and commutative, that is, the graphs obtained by changing the order in which Cartesian products are taken are isomorphic. We will use this fact throughout this section without explicitly mentioning it.

A \emph{$G_1$-layer} of $G = G_1 \sq G_2$ is the subgraph of $G$ induced by the set $\{ (v,v_2) \mid v \in V_1\}$ where $v_2 \in V_2$ is fixed. Analogously define a $G_2$-layer.

Throughout this section we will state state some well known facts about Cartesian products of graphs without proving them. All of the results and their proofs can be found in \cite{pre05906416}.

The first fact that we will need is, that the distance between two vertices in a Cartesian product is the sum of the distances of the projections to the factors. Hence a composition of shortest paths in the factors is a shortest path in the Cartesian product.

\begin{lem}
\label{lem:fixlayers}
Let $G$ be a locally finite graph with infinite motion which is not prime with respect to the Cartesian product. Choose a decomposition $G= G_1 \sq G_2$ such that $G_1$ is infinite. Let $c$ be a random colouring of $G$. Then $c$ almost surely fixes every $G_1$-layer setwise.
\end{lem}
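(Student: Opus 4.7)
My plan is to derive the lemma directly from Lemma~\ref{lem:sim_s}. That lemma guarantees that $(\Aut G)_c$ almost surely stabilises every $\sim_S$-equivalence class setwise, so it suffices to show that each $G_1$-layer of $G = G_1 \sq G_2$ is a union of $\sim_S$-classes. Equivalently, I will show that whenever $x = (u, v_2)$ and $y = (u', v_2')$ lie in different $G_1$-layers (so $v_2 \neq v_2'$), one has $x \not\sim_S y$.

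Assume for contradiction that $x \sim_S y$; then there is some $n_0$ with $S_x(n) = S_y(n)$ for all $n \geq n_0$. The main tool is the Cartesian-product distance formula recalled at the start of the subsection: $d((a_1, a_2), (b_1, b_2)) = d_{G_1}(a_1, b_1) + d_{G_2}(a_2, b_2)$. Fixing a second coordinate $w \in V_2$ and slicing, this formula converts $S_x(n) \cap (V_1 \times \{w\})$ into $S^{G_1}_u(n - d_{G_2}(v_2, w)) \times \{w\}$, and analogously for $S_y(n)$. Equating the two slices yields $S^{G_1}_u(n - d_{G_2}(v_2, w)) = S^{G_1}_{u'}(n - d_{G_2}(v_2', w))$.

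Writing $k = d_{G_2}(v_2, v_2') \geq 1$ and specialising this slice identity to $w = v_2$ and to $w = v_2'$ gives, for all sufficiently large $n$,
\[
S^{G_1}_u(n) = S^{G_1}_{u'}(n-k) \qquad \text{and} \qquad S^{G_1}_{u'}(n) = S^{G_1}_u(n-k).
\]
Chaining these two identities yields $S^{G_1}_{u'}(n) = S^{G_1}_{u'}(n-2k)$ for all sufficiently large $n$. But $G_1$ is an infinite, connected, locally finite graph, so $u'$ has unbounded eccentricity in $G_1$ and every sphere $S^{G_1}_{u'}(n)$ is nonempty; since a vertex cannot lie at two different distances from $u'$, this forces $k = 0$, contradicting $v_2 \neq v_2'$.

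The only technical subtlety I foresee is ensuring that all sphere arguments appearing above are nonnegative and at least $n_0$; this is easily handled by restricting attention to sufficiently large $n$ in the two critical slices $w = v_2, v_2'$. Once the contradiction is secured, the lemma follows immediately from Lemma~\ref{lem:sim_s}.
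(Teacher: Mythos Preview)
Your proof is correct and follows essentially the same approach as the paper: both reduce to Lemma~\ref{lem:sim_s} by showing that two vertices in different $G_1$-layers cannot be $\sim_S$-equivalent, and both exploit the additive distance formula for Cartesian products to reach a contradiction. The only difference is cosmetic: the paper picks a geodesic ray in the $G_1$-layer of $v$ together with its parallel copy in the layer of $w$ and compares distances to derive an impossible chain of inequalities, whereas you slice the spheres by fixing the $G_2$-coordinate and obtain the periodicity $S^{G_1}_{u'}(n) = S^{G_1}_{u'}(n-2k)$, which is equally impossible since $G_1$ is infinite, connected, and locally finite.
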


\begin{proof}
Once again we would like to use Lemma~\ref{lem:sim_s}. So assume that there are two sphere equivalent vertices $v \sim_S w$ of $G$ which lie in different $G_1$-layers. 

Let $R = (v=v_0v_1v_2v_3\ldots)$ be a geodesic ray (that is, $d(v_0,v_i) = i$) starting in $v$ which remains inside the same $G_1$-layer forever. Denote by $R'= (v_0'v_1'v_2'v_3'\ldots)$ the ray in the layer of $w$ which is obtained from $R$ by only changing the $G_2$-coordinates.

Then $d(w,v_i) < d(w,v_i')$ while $d(v,v_i) > d(v,v_i')$ for every $i \in \mathbb N$. The spheres $S_v(r)$ and $S_w(r)$ are supposed to be equal for $r \geq r_0$ which implies that $d(v,v_r) = d(w,v_r)$ and $d(v,v_r') = d(w,v_r')$ for large enough values of $r$. But then we would have
\[
	d(v,v_r') > d(v,v_r) = d(w,v_r) > d(w,v_r') = d(v,v_r'). \qedhere
\]
\end{proof}

It is known that each graph $G$ has a unique decomposition into prime graphs with respect to the Cartesian product. It is easy to see that, if $G$ is locally finite, then it only has finitely many factors. Hence an infinite, locally finite graph must have at least one infinite prime factor. If there is more than one infinite prime factor, then $G$ can be decomposed into two infinite factors and in this case $G$ will be $2$-distinguishable by Corollary \ref{cor:distinct_spheres}. However, this fact can also be seen as a corollary to Lemma \ref{lem:fixlayers}.

\begin{cor}
\label{cor:infinitefactors}
Let $G$ be a locally finite graph with more than one infinite prime factor. Then a random colouring of $G$ is almost surely distinguishing.
\end{cor}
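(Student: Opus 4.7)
The plan is to apply Lemma~\ref{lem:fixlayers} twice, using a coarse grouping of the prime factor decomposition into two infinite pieces, and then exploit the fact that in a Cartesian product each vertex is the unique intersection of its two layers.

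First I would invoke the unique prime factor decomposition of $G$ with respect to the Cartesian product. Since $G$ is locally finite, this decomposition is finite, and by hypothesis at least two of the prime factors are infinite. Partition the prime factors into two groups, each containing at least one infinite prime factor, and let $H_1$ and $H_2$ be the Cartesian products of the factors in each group. Then $G = H_1 \sq H_2$ with both $H_1$ and $H_2$ infinite, locally finite graphs, and $G$ still has infinite motion.

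Next I would apply Lemma~\ref{lem:fixlayers} to the decomposition $G = H_1 \sq H_2$ twice: once taking the infinite factor to be $H_1$ (so $c$ almost surely fixes every $H_1$-layer setwise), and once taking it to be $H_2$ (so $c$ almost surely fixes every $H_2$-layer setwise). The intersection of two almost-sure events is almost sure, so with probability one, every $\varphi \in (\Aut G)_c$ stabilises every $H_1$-layer and every $H_2$-layer setwise.

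Finally, I would observe that for any vertex $(v_1,v_2) \in V(H_1) \times V(H_2)$, the $H_1$-layer through $(v_1,v_2)$ and the $H_2$-layer through $(v_1,v_2)$ meet in the single vertex $(v_1,v_2)$. Hence any automorphism that preserves both layer families setwise must send $(v_1,v_2)$ to itself, and therefore equals the identity. This shows that $(\Aut G)_c$ is almost surely trivial, i.e.\ $c$ is almost surely distinguishing. There is no serious obstacle here; the only point to check carefully is that Lemma~\ref{lem:fixlayers} may indeed be applied to the grouped decomposition, which is immediate once both $H_1$ and $H_2$ are verified to be infinite.
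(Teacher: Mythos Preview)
Your argument is correct and mirrors the paper's proof: write $G$ as a Cartesian product of two infinite factors, apply Lemma~\ref{lem:fixlayers} in each direction so that both layer families are almost surely fixed setwise, and then use that a vertex is uniquely determined by its two layers. The only cosmetic difference is that the paper phrases the last step vertex by vertex and then invokes countability of $V$, whereas you combine the two almost-sure events first and conclude directly that any layer-preserving automorphism is the identity.
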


\begin{proof}
If $G$ has two infinite prime factors then it can be written as  $G = G_1 \sq G_2$ where both $G_1$ and $G_2$ are infinite. Now every vertex is uniquely defined by its  $G_1$-layer and its $G_2$-layer. Both of these layers are almost surely fixed by a random colouring. Hence for every vertex $v$ the probability that  the stabiliser of a random colouring is contained in the stabiliser of $v$ is $1$. 

Since there are only countably many vertices this implies that the stabiliser of a random colouring is almost surely trivial.
\end{proof}

As a direct consequence we get the following result about powers of locally finite graphs.

\begin{cor}
Let $G$ be a Cartesian power of an infinite, locally finite graph. Then a random colouring of $G$ is almost surely distinguishing. {\hfill \qedsymbol}
\end{cor}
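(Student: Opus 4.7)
The plan is to derive this corollary as an immediate consequence of Corollary~\ref{cor:infinitefactors}, which handles graphs with more than one infinite prime factor. So the entire content of the proof lies in verifying that a Cartesian power of an infinite, locally finite graph always has at least two infinite prime factors with respect to the Cartesian product.

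First I would invoke unique prime factorisation for connected graphs with respect to the Cartesian product (stated in \cite{pre05906416} and used freely in the preceding subsection). Write $H = H_1 \sq H_2 \sq \cdots \sq H_k$ as a product of prime factors. Since $H$ is infinite while locally finite, and a Cartesian product of finitely many finite graphs is finite, at least one of the $H_i$ must be infinite; without loss of generality $H_1$ is infinite.

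Now let $G = H^n$ for some exponent $n \geq 2$ (which is what is meant by a Cartesian power in the present context; for $n = 1$ the statement reduces to the hypothesis, and there is nothing to prove beyond what was already assumed). By associativity and commutativity of the Cartesian product and uniqueness of prime factorisation, the prime factorisation of $G$ consists of $n$ copies of each $H_i$. In particular the infinite prime $H_1$ appears at least $n \geq 2$ times, so $G$ has at least two infinite prime factors. Corollary~\ref{cor:infinitefactors} then immediately yields that a random colouring of $G$ is almost surely distinguishing.

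There is really no obstacle here beyond checking that the prime factorisation machinery applies (connectedness and local finiteness of $H$, which are either assumed or harmless to assume) and being clear that ``Cartesian power'' tacitly means an exponent at least two; the substantive work has already been absorbed into Lemma~\ref{lem:fixlayers} and Corollary~\ref{cor:infinitefactors}.
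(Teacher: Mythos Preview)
Your argument is exactly the intended one: the paper states the corollary with no proof (only a \qedsymbol), treating it as an immediate consequence of Corollary~\ref{cor:infinitefactors}, and you have correctly spelled out why a Cartesian power $H^n$ with $n\geq 2$ of an infinite, locally finite graph has at least two infinite prime factors. One small wording issue: your parenthetical remark that for $n=1$ ``the statement reduces to the hypothesis'' is misleading, since for $n=1$ the assertion would be precisely the open Conjecture~\ref{con:random}; it is better simply to say that ``Cartesian power'' is understood as $n\geq 2$.
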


Finally, the following result states, that if we would like to prove Conjecture~\ref{con:random}, it suffices to consider prime graphs.

\begin{cor}
If a random colouring is almost surely distinguishing for every locally finite, prime graph with infinite motion, then it is almost surely distinguishing for every locally finite graph with infinite motion.
\end{cor}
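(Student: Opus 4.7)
The plan is to perform a case analysis based on the unique Cartesian prime decomposition of $G$. Let $G$ be a locally finite graph with infinite motion and write $G = G_1 \sq G_2 \sq \cdots \sq G_m$ for its prime decomposition, which is finite because $G$ is locally finite. If $m=1$ the statement is just the hypothesis. If at least two of the factors are infinite, then $G$ admits a decomposition into two infinite Cartesian factors and Corollary~\ref{cor:infinitefactors} already gives the conclusion. Since a Cartesian product of finite graphs is finite, at least one factor must be infinite, and the only case requiring a new argument is that exactly one prime factor, say $H := G_1$, is infinite, while the remaining prime factors together form a finite graph $F := G_2 \sq \cdots \sq G_m$ with $G = H \sq F$.

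In this remaining case I would first verify that $H$ itself has infinite motion, so that the hypothesis applies to $H$. Any non-trivial automorphism $\psi$ of $H$ with finite motion lifts to the automorphism $(v,u) \mapsto (\psi v, u)$ of $G$, whose motion is $|V(F)|$ times that of $\psi$, hence still finite, contradicting infinite motion of $G$. So $H$ is a locally finite, prime graph with infinite motion, and by hypothesis a random $2$-colouring of $H$ is almost surely $(\Aut H)$-distinguishing.

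Next, apply Lemma~\ref{lem:fixlayers} to the decomposition $G = H \sq F$ to conclude that a random colouring $c$ of $G$ almost surely fixes every $H$-layer $L_u := \{(v,u) \mid v \in V(H)\}$ setwise. For any $\varphi \in (\Aut G)_c$ that fixes every $L_u$, the restriction of $\varphi$ to $L_u$ must take the form $(v,u) \mapsto (f_u(v), u)$ for some bijection $f_u$ of $V(H)$, and $f_u$ is in fact an element of $\Aut H$ because edges within an $H$-layer are precisely the $H$-edges, by the standard fact that distances in a Cartesian product decompose as the sum of distances in the factors. Moreover, $f_u$ preserves the restricted colouring $c_u : v \mapsto c(v,u)$. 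Each $c_u$ is itself a random $2$-colouring of $H$, and the family $(c_u)_{u \in V(F)}$ is independent since the layers are disjoint; by the hypothesis each $c_u$ is almost surely $(\Aut H)$-distinguishing, so $f_u = \id$ almost surely. Because $V(F)$ is finite, almost surely $f_u = \id$ for every $u$ simultaneously, which forces $\varphi = \id$.

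The only points where care is required are the two structural ingredients: that $H$ inherits infinite motion from $G$, and that a layer-preserving automorphism of $G$ restricts to an honest automorphism of $H$ on each layer. Both follow immediately from the Cartesian product facts already used in this section and so present no real obstacle; the proof is essentially a clean combination of Lemma~\ref{lem:fixlayers} with the hypothesis applied to the unique infinite prime factor.
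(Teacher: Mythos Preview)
Your proof is correct and follows essentially the same route as the paper: reduce via Corollary~\ref{cor:infinitefactors} to the case of a single infinite prime factor $H$ with $G = H \sq F$ ($F$ finite), use Lemma~\ref{lem:fixlayers} to fix the $H$-layers setwise, and then apply the hypothesis to each layer. You are in fact more careful than the paper on two points it leaves implicit --- that $H$ inherits infinite motion from $G$, and that a layer-preserving automorphism of $G$ restricts to a genuine element of $\Aut H$ on each layer --- so no gap remains.
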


\begin{proof}
By Corollary~\ref{cor:infinitefactors} it suffices to consider graphs with only one infinite prime factor. Let $G = G_1 \sq G_2$ be a factorisation of such a graph where $G_1$ is the unique infinite prime factor and let $c$ be a random colouring of $G$. 

By Lemma~\ref{lem:fixlayers} all $G_1$-layers are almost surely setwise fixed by every automorphism in $(\Aut G)_c$. By assumption $c$ is almost surely distinguishing for $G_1$ because $G_1$ is an infinite prime graph. Hence $c$ almost surely fixes every $G_1$-layer pointwise.
\end{proof}

\subsection{Growth bounds}

In the last part of this paper we will be concerned with growth bounds. We say that a graph has growth $f(n)$ if there is a vertex $v_0$ and a constant $c$ such that the ball $B_{v_0}(n)$ has at most cardinality $c f(n)$ for every $n \in \mathbb N$. Notice that $f(n)$ is independent of the choice of $v_0$. Furthermore recall that the sphere around $v_0$ with radius $n$ is defined as $S_{v_0} (n) = \{ v \in V \mid d(v_0,v) = n\}$. Since $B_{v_0}(n) = \bigcup _{k=0}^n S_{v_0}(k)$ it is clear that $\vert S_{v_0}(n) \vert$ fulfills the same growth bound as $\vert B_{v_0}(n) \vert$.

We will now show the following extension of a result in \cite{growth} stating that a graph fulfilling certain growth bounds is $2$-distinguishable. Once again, our contribution is to show that for such graphs a random colouring is almost surely distinguishing.

\begin{thm}
\label{thm:growth}
Let $G$ be a graph with infinite motion and growth $\bigO\left( 2^{(\frac 1 2 - \varepsilon) \sqrt n}\right)$. Then a random colouring of $G$ is almost surely distinguishing.
\end{thm}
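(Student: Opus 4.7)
Fix a base vertex $v_0 \in V$ and write $B_n = B_{v_0}(n)$, $b_n = |B_n|$; the growth hypothesis reads $b_n \leq C \cdot 2^{(1/2 - \varepsilon)\sqrt n}$ for some constant $C$. My plan is to show, via a Borel--Cantelli argument, that almost surely no non-trivial automorphism of $G$ preserves the random colouring $c$. For each $n$ let $E_n$ be the event that some non-trivial restriction $\varphi|_{B_n}$ of a graph automorphism preserves $c|_{B_n}$. Since any colour-preserving non-trivial $\varphi$ witnesses $E_n$ for every $n$, it suffices to show $\sum_n \prob[E_n] < \infty$, whence almost surely only finitely many $E_n$ occur and so no such $\varphi$ can exist.

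For the probability bound I would apply the Russel--Sundaram pairing from the proof of Lemma~\ref{lem:motion}: a fixed non-trivial restriction $\rho$ with motion $m$ on $B_n$ preserves a random $2$-colouring of $B_n$ with probability at most $2^{-m/2}$. Writing $N_{n,m}$ for the number of distinct non-trivial restrictions of motion $m$, grouping by motion yields
\[
	\prob[E_n] \leq \sum_{m \geq 1} N_{n,m} \cdot 2^{-m/2}.
\]
The argument then splits at a threshold $m = t(n) = \Theta(\sqrt n)$. For restrictions of small motion $m < t(n)$, I would invoke Lemma~\ref{lem:sim_s} together with its iterated strengthening from the remark following Lemma~\ref{lem:fixclasses}: such restrictions must permute vertices inside $\sim_S$-equivalence classes, which under the growth hypothesis are few enough that the countable-type union bound of Theorem~\ref{thm:countable} gives probability zero to this contribution. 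For restrictions of large motion $m \geq t(n)$, the crux is to sharpen the count $N_{n,m}$ by using the fact that a restriction is an isometric embedding of the rooted ball $(B_n, v_0)$ into $G$, with image $B_{\varphi v_0}(n)$; combining this isometric-embedding constraint with subdegree finiteness and the growth bound should yield $N_{n,m} \leq 2^{c' \sqrt n \cdot m}$ for some $c' < 1/2$, so that the tail $\sum_{m \geq t(n)} N_{n,m}\,2^{-m/2}$ decays geometrically in $m$ and, after summing over $n$, gives the required convergence.

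The main obstacle is precisely this sharper count on $N_{n,m}$. The naive bound $N_{n,m} \leq b_n^m$ combined with the growth hypothesis gives $N_{n,m}\,2^{-m/2} \leq 2^{m((1/2-\varepsilon)\sqrt n - 1/2)}$, whose exponent is positive for large $n$, so the inner sum over $m$ diverges. Retaining only an $O(\sqrt n)$ cost per moved vertex requires one to exploit that each next image is constrained both by adjacency to already-placed images and by the finiteness of suborbits, which is essentially the combinatorial content of the deterministic construction in \cite{growth}; the specific constant $1/2 - \varepsilon$ in the growth hypothesis is exactly what is needed to produce a strictly positive margin against the $2^{-m/2}$ savings from Russel--Sundaram. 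Once the sharpened count is in hand and fed into the union bound above, the Borel--Cantelli argument closes the proof.
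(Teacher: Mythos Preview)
There is a genuine arithmetic gap in your large-motion branch. Even granting the sharpened count $N_{n,m}\le 2^{c'\sqrt{n}\,m}$ for some fixed $c'<1/2$, the term $N_{n,m}\,2^{-m/2}$ is bounded by $2^{m(c'\sqrt{n}-1/2)}$, whose exponent is \emph{positive} as soon as $\sqrt{n}>1/(2c')$. Thus for all large $n$ the inner sum over $m$ diverges rather than decaying geometrically, and Borel--Cantelli cannot close. No bound of the shape $N_{n,m}\le 2^{O(\sqrt{n})\cdot m}$ can help: the $\sqrt{n}$ factor will always overwhelm the constant $1/2$ saved by Russell--Sundaram as $n\to\infty$. (Your ``naive'' bound already has $c'=1/2-\varepsilon<1/2$, so the proposed sharpening is in fact no sharpening at all.) Your small-motion branch is also under-specified: small motion of $\varphi$ on $B_n$ does not by itself force $\varphi$ to stay inside $\sim_S$-classes, and invoking Theorem~\ref{thm:countable} to get ``probability zero'' is a statement about the full colouring, not a contribution to the finite quantity $\prob[E_n]$ that your Borel--Cantelli sum requires.

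The paper's proof avoids this obstruction by changing both the domain of restriction and the stratification parameter. After disposing of automorphisms with $\varphi v_0\nsim_S v_0$ via Lemma~\ref{lem:sim_s}, it reduces to automorphisms that fix all spheres $S_{v_0}(i)$ setwise for $i\ge k$ but act nontrivially on each. It then works inside $B_{v_0}(n^2)$, partitions it into annuli $R_j=B_{v_0}((j{+}1)n)\setminus B_{v_0}(jn)$, and classifies automorphisms by the minimum number of vertices moved on a \emph{single sphere} beyond $R_j$. The propagation lemma (two automorphisms agreeing on the boundary of a finite set agree on the set) implies that the restriction to $R_j$ is determined by the action on one sphere, so the relevant count is at most $|S_{v_0}(i)|^{2^j}\le 2^{(1/2-\varepsilon)n\cdot 2^j}$; at the same time, an automorphism not already in $\Lambda_{j-1}'$ moves more than $2^{j-1}$ vertices in \emph{every} one of the $n$ spheres of $R_j$, so its motion on $R_j$ is at least $n\cdot 2^{j-1}$. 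It is precisely this asymmetry---count governed by a single sphere, motion accumulated across $n$ spheres---that makes $\log|\Pi_j|-m/2$ negative of order $-\varepsilon\,2^{j}n$, after which $\prod_j(1-\prob[X_j])\to 1$. A single-scale union bound over restrictions to $B_n$, grouped by total motion on $B_n$, cannot reproduce this asymmetry, which is why your scheme stalls.
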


In order to prove this result we need two auxiliary lemmas. Firstly, we will use the following refinement of Lemma~\ref{lem:motion}. The proof stated in Section~\ref{sec:randomcolour} also works for this result.

\begin{lem}
\label{lem:motion_prob}
Let $S$ be a finite set and let $\Delta$ be a set of nontrivial permutations of $S$ with motion $\geq m$. Let $c$ be a random colouring of $S$. Then
\[
\prob [\exists \gamma \in \Delta \colon c \varphi = c] \leq \vert \Delta \vert \, 2^{-\frac m 2}. \tag*{\qedsymbol}
\]
\end{lem}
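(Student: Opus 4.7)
The plan is to mimic the proof of Lemma~\ref{lem:motion} essentially verbatim, with only cosmetic changes to reflect that $\Delta$ is now an arbitrary set of non-trivial permutations rather than a group of automorphisms. The argument factors cleanly into a single-permutation estimate followed by a union bound, and neither step relies on any group or graph structure.

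First I would fix a non-trivial $\gamma \in \Delta$ and decompose it into disjoint cycles on $S$. Since $\gamma$ moves at least $m$ points, and every non-trivial cycle has length at least $2$, the non-trivial cycles among the moved points number at most $m/2$; together with the $|S|-m$ (or fewer) fixed points this gives at most $|S|-m/2$ cycles in total. The event $c\gamma = c$ is exactly the event that $c$ is constant on each cycle of $\gamma$. Because the $|S|$ coin flips defining $c$ are independent and uniform, the probability of this event equals $2^{k}/2^{|S|}$, where $k$ is the number of cycles; hence
\[
\prob[c\gamma = c] \leq 2^{|S|-m/2}/2^{|S|} = 2^{-m/2}.
\]

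The second step is the union bound. Since $S$ is finite, the set $\Delta$ is automatically finite (it is a subset of the finite permutation group $\Pi_S$), so $\sigma$-subadditivity gives
\[
\prob[\exists \gamma \in \Delta \colon c\gamma = c] \leq \sum_{\gamma \in \Delta} \prob[c\gamma = c] \leq |\Delta|\, 2^{-m/2},
\]
which is the claimed estimate.

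There is no real obstacle here; the only points worth noting in the write-up are that the estimate for a single $\gamma$ uses only the motion bound (not any algebraic closure property of $\Delta$), and that one does not need to separately exclude an identity element because $\Delta$ is assumed to consist only of non-trivial permutations. The paper even signals this by remarking that the proof of Lemma~\ref{lem:motion} carries over, so the proposal is really just to observe that the two steps of that earlier proof are independent of the ambient group.
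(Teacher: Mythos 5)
Your proposal is correct and is exactly the argument the paper intends: the paper gives no separate proof, merely noting that the cycle-counting estimate and union bound from Lemma~\ref{lem:motion} carry over unchanged since neither step uses the graph or group structure. Both your single-permutation bound $\prob[c\gamma=c]\leq 2^{-m/2}$ and the finite union bound are sound.
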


The second auxiliary lemma we will use can be found in \cite{growth}, although the implications stated below have been known before, see for example \cite{cuimle} where they play a crucial role in the proof of one of the main results.

\begin{lem}
Let $G$ be a graph with infinite motion and let $\varphi$ be an automorphism of $G$. Let $V'$ be the set of vertices moved by $\varphi$. Then the subgraph of $G$ induced by $V'$ only has infinite components. {\hfill \qedsymbol}
\end{lem}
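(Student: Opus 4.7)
The plan is to argue by contradiction. Assume $\varphi$ is nontrivial (so $|V'|=\infty$ by infinite motion of $G$) and that $G[V']$ contains a \emph{finite} component $C$; from these data I will produce a nontrivial automorphism of $G$ with finite motion, contradicting the hypothesis. Two preliminary observations do most of the work. First, $v\in V'$ if and only if $\varphi(v)\in V'$ (both are equivalent to $\varphi(v)\neq v$), so $\varphi$ restricts to a bijection on $V'$ and, being an automorphism of $G$, permutes the components of $G[V']$; in particular $\varphi(C)$ is again a finite component of size $|C|$. Second, any vertex $u\notin C$ that is adjacent to some vertex of $C$ must be fixed by $\varphi$: if such a $u$ lay in $V'$, it would lie in the component of its $C$-neighbour and hence in $C$ itself.

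In the case $\varphi(C)=C$, I would define $\psi$ to agree with $\varphi$ on $C$ and with $\id$ on $V\setminus C$. Edges inside $C$ and edges inside $V\setminus C$ are obviously preserved; for an edge $uv$ with $v\in C$ and $u\notin C$ the second observation yields $\psi(u)=u=\varphi(u)$, so $\psi(u)\psi(v)=\varphi(u)\varphi(v)$ is again an edge. If instead $\varphi(C)\neq C$, the two components are disjoint and I would take $\psi$ to equal $\varphi$ on $C$, $\varphi^{-1}$ on $\varphi(C)$, and $\id$ elsewhere; bijectivity is immediate, and the same boundary analysis (applied to both $C$ and $\varphi(C)$, using that distinct components of $G[V']$ share no edge) again makes $\psi$ a graph automorphism. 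In both cases $\psi$ is nontrivial because $C\subseteq V'$, while its motion is at most $|C|$ or $2|C|$ respectively, giving the desired contradiction.

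The main obstacle is really the boundary observation, which ensures that restricting $\varphi$ to one or two finite components of $G[V']$ does not destroy adjacencies across the boundary; once this is in place, the edge-by-edge verifications in each case are mechanical. Conceptually, the construction performs a local surgery that rewires $\varphi$ only on a finite set of vertices, and infinite motion of $G$ forbids any such nontrivial finite-support automorphism.
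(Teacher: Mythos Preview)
Your argument is correct. The boundary observation is exactly the right hinge: once every neighbour of $C$ (and of $\varphi(C)$) outside the component is seen to be a fixed point of $\varphi$, the locally modified map $\psi$ preserves all cross-boundary edges, and in Case~2 the absence of edges between the two components follows because both lie inside $V'$. A small remark that tightens Case~2: your $\psi$ is an involution ($\psi^2=\id$), so verifying that edges are mapped to edges already yields the full automorphism property without a separate check for non-edges.

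As for comparison with the paper: there is nothing to compare. The paper does not prove this lemma; it quotes the statement from \cite{growth} (with roots in \cite{cuimle}) and marks it with a \qedsymbol. Your proof is the standard one and would serve perfectly well in place of the citation.
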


In particular the above lemma implies that
\begin{itemize}
\item if an automorphism acts nontrivially on a finite set, then it also acts nontrivially on its boundary,
\item if an automorphism fixes the boundary of a finite set pointwise, then it fixes the whole set pointwise, and
\item if two automorphisms coincide on the boundary of a finite set, then they coincide on the whole set.
\end{itemize}

Now we are ready to prove Theorem~\ref{thm:growth}.

\begin{proof}[Proof of Theorem~\ref{thm:growth}]
Let $c$ be a random coloring of $G$ and choose a vertex $v_0 \in V$. For every $v \in V$ let $\Delta_{v_0}^v$ be the set of automorphisms which map $v_0$ to $v$. Clearly, $(\Delta_{v_0}^v)_{v \in V}$ is a countable decomposition of $\Aut G$. Hence we only need to show that $\Delta_{v_0}^v$ almost surely contains no automorphism $\varphi$ such that $\varphi c =c$.

For $v \nsim_S v_0$ this follows from Lemma~\ref{lem:sim_s}. If $v \sim_S v_0$ then it follows from the following claim:
\begin{itemize}
\item[(\textasteriskcentered)] Let $\Delta_k$ be the set of automorphisms that fix $S_{v_0}(i)$ setwise but not pointwise for every $i \geq k$. Then a random colouring almost surely breaks every automorphism in $\Delta_k$.
\end{itemize}
Assume that (\textasteriskcentered) is true and let $\varphi \in \Delta_{v_0}^v$ for some $v \sim_S V_0$. Since for every $v \sim_S v_0$ there is some index $i$ such that $S_{v_0}(i) = S_v(i)$ those spheres are fixed setwise for $i$ large enough. Furthermore, since $G$ has infinite motion, $\varphi$ has to act nontrivially on infinitely many of the spheres. If it acts nontrivially on some sphere $S_{v_0}(k)$ then it also acts nontrivially on $S_{v_0}(i)$ for each $i > k$. Hence $\varphi$ is contained in some set $\Delta_k$. 

By (\textasteriskcentered) a random colouring $c$ almost surely breaks all of $\Delta_k$ and there are only countably many values for $k$. Hence $c$ almost surely breaks every automorphism in the union of the $\Delta_k$. This implies that a random colouring almost surely breaks all of $\Delta_{v_0}^v$ which completes the proof of the theorem.

So we only need to show that (\textasteriskcentered) holds for every $k$. Let $n > k$. Because of the growth condition on the graph we know that there is some constant $c$ such that
\[
\vert B_{v_0}(n^2) \vert \leq c \, 2^{(\frac 1 2 - \varepsilon) n}. 
\]
This in particular implies that the same upper bound holds for the size of each sphere $S_{v_0}(i)$ for $i < n^2$. For $1 \leq j \leq n-1$ define
\begin{align*}
R_j &= B_{v_0}((j+1)n) \setminus B_{v_0}(jn),\\
\Lambda_j' &= \{\varphi \in \Delta_k \mid \varphi \text{ moves at most } 2^j \text{vertices in some } S_{v_0}(i) \text{ for } i >(j+1)n\},\\
\Lambda_j &= \Lambda_j' \setminus \Lambda_{j-1}'.
\end{align*}
Let $\Pi_j$ be the set of different permutations induced by $\Lambda_j$ on $R_j$. 

The next step is to estimate the probability that a random coloring of $R_j$ breaks all automorphisms in $\Lambda_j$ or, equivalently, all permutations in $\Pi_j$. Since we would like to use Lemma~\ref{lem:motion_prob} we need to establish estimates for the cardinality of $\Pi_j$ and the motion of $\Pi_j$ on $R_j$.

To estimate the number of different permutations observe that two automorphisms that coincide on $S_i$ for some $i >(j+1)n$ also have to coincide on $R_j$. Hence it suffices to estimate the number of permutations on $S_i$ which move less than $2^j$ vertices and add those estimates up. Since the size of $S_i$ is bounded by $2^{(\frac 1 2 - \varepsilon) n}$, the number of such permutations will be bounded by
\[
	\binom{c \, 2^{(\frac 1 2 - \varepsilon) n}}{2^j} (2^j)! \leq \frac{2^{2^j(\frac 1 2 - \varepsilon) n + 2^j \log c}}{(2^j)!} (2^j)! = 2^{2^j(\frac 1 2 - \varepsilon) n + 2^j \log c}.
\]
Adding those estimates up for $(j+1)n \leq i \leq n^2$ we obtain
\[
	\vert \Pi_j \vert \leq n^2 2^{2^j(\frac 1 2 - \varepsilon) n + 2^j \log c}.
\]

In order to estimate the motion $m$ of $\Pi_j$ on $R_j$ observe that an element of $\Lambda_j$ moves at least $2^{j-1}$ vertices in every sphere $S_i$ for $jn < i < (j+1)n$. Otherwise it would be contained in $\Lambda_{j-1}'$. Adding those estimates up we get
\[
	m \geq n 2^{j-1}.
\]

Let $X_j$ denote the event that there is a permutation $\pi \in \Pi_j$ that preserves a random coloring $c$ of $R_j$. Plugging the estimates from above into Lemma~\ref{lem:motion_prob} we obtain
\begin{align*}
\log \prob [X_j] 
& \leq \log \vert \Pi_j \vert -\frac m 2 \\
&\leq 2 \log n + 2^j(\frac 1 2 - \varepsilon) n + 2^j \log c - 2^{j-1} n \\
&= -\varepsilon 2^j n + 2^j \log c + 2 \log n.
\end{align*}
If we choose $n$ large enough this implies that 
\[
\log \prob [X_j] \leq  -\varepsilon 2^{j-1} n \leq -\varepsilon n.
\]

The probability that for every $j$ a random coloring of $R_j$ breaks $\Pi_j$ is now given by
\[
\prod_{j=1}^{n-1} (1-\prob [X_j]) \geq (1-2^{-\varepsilon n})^n
\]
which tends to $1$ as $n$ goes to infinity. Finally observe that if $n$ is large enough then
\[
	\Delta_k = \bigcup_{j=1}^{n-1} \Lambda_j
\]
because the motion on $B_{v_0}(n^2)$ is bounded by the number of vertices in $B_{v_0}(n^2)$. The set $\Lambda_j'$ contains all automorphisms whose motion is at most $2^j$ hence for $n$ large enough and $j \geq \frac n 2$ it will be true that $\Delta_k = \Lambda_j'$.
\end{proof}

\bibliographystyle{abbrv}
\bibliography{sources}

\end{document}